\documentclass[a4paper,onefignum,oneeqnum,onetabnum]{siamart250211}

\usepackage{amsmath,amsfonts,amssymb,amscd,mathrsfs,array,subeqnarray}
\usepackage{booktabs}
\usepackage{tabularx}
\usepackage{enumitem}
\usepackage{stmaryrd}
\usepackage{bm}
\usepackage{xcolor}

\usepackage{geometry}
\theoremstyle{plain}
\newtheorem{thm}{\bf Theorem}[section]
\newtheorem{coro}[thm]{\bf Corollary}
\newtheorem{lem}[thm]{\bf Lemma}
\newtheorem{prop}[thm]{\bf Proposition}
\newtheorem{defn}[thm]{\bf Definition}

\newtheorem{remark}[thm]{\bf Remark}
\newtheorem{exam}{\bf Example}[section]

\renewcommand\theequation{\arabic{section}.\arabic{equation}}
\renewcommand\thetable{\arabic{section}.\arabic{table}}
\renewcommand\thefigure{\arabic{section}.\arabic{figure}}

\makeatletter \@addtoreset{equation}{section} \makeatother \makeatletter
\@addtoreset{thm}{section} \makeatother

\newcommand\assref[1]{\hyperref[#1]{Assumption \ref*{#1}}}
\newcommand\lemref[1]{\hyperref[#1]{Lemma \ref*{#1}}}
\newcommand\defref[1]{\hyperref[#1]{Definition \ref*{#1}}}
\newcommand\thmref[1]{\hyperref[#1]{Theorem \ref*{#1}}}
\newcommand\proref[1]{\hyperref[#1]{Proposition \ref*{#1}}}
\newcommand\corref[1]{\hyperref[#1]{Corollary \ref*{#1}}}
\newcommand\figref[1]{\hyperref[#1]{Figure \ref*{#1}}}
\newcommand\tabref[1]{\hyperref[#1]{Table \ref*{#1}}}
\newcommand\secref[1]{\hyperref[#1]{Section \ref*{#1}}}
\newcommand\appref[1]{\hyperref[#1]{Appendix \ref*{#1}}}
\newcommand\remref[1]{\hyperref[#1]{Remark \ref*{#1}}}
\newcommand\exaref[1]{\hyperref[#1]{Example \ref*{#1}}}

\newcommand*{\dd}{\mathop{}\!\mathrm{d}}

\allowdisplaybreaks[4]

\title{Indefinite Stochastic Linear-Quadratic Optimal Control Problems with Random Coefficients and Poisson Jumps: Closed-Loop Representation of Open-Loop Optimal Controls\thanks{JW is supported by the National Natural Science Foundation of China (Grant No. 12571478) and Guangdong Basic and Applied Basic Research Foundation (Grant No. 2025B151502009),
and Shenzhen Fundamental Research General Program (Grant
No. JCYJ20230807093309021).
JX is supported by the National Natural Science Foundation of China (Grant No. 12471418).
XZ is supported by the National Natural Science Foundation of China (Grant Nos. 12171086 and 12371472), Tianyuan Fund for Mathematics (Grant No. 12426652), and Jiangsu Provincial Scientific Research Center of Applied Mathematics (Grant No. BK20233002).}}

\author{
Kai Ding\thanks{Department of Mathematics, Southern University of Science and Technology, Shenzhen, 518055, China (\email{dingk@sustech.edu.cn})}
\and
Jiaqiang Wen\thanks{Department of Mathematics and SUSTech International Center for Mathematics, Southern University of Science and Technology, Shenzhen, 518055, China (\email{wenjq@sustech.edu.cn})}
\and
Jie Xiong \thanks{Department of Mathematics and SUSTech International Center for Mathematics,
Southern University of Science and Technology, Shenzhen, 518055, China (\email{xiongj@sustech.edu.cn})}
\and
Xin Zhang\thanks{School of Mathematics, Southeast University, Nanjing, 211189, China (\email{x.zhang.seu@gmail.com})}
}

\begin{document}
\date{}

\maketitle

\noindent{\bf Abstract:}
This paper is concerned with stochastic linear-quadratic (SLQ) optimal control problems with random coefficients and Poisson jumps. The weighting matrices are allowed to be random and indefinite.
Under the uniform convexity condition, the global fundamental matrix representation $P=\mathbf Y\mathbf X^{-1}$, used in the diffusion case, is generally unavailable because Poisson jumps may cause the optimal state fundamental matrix $\mathbf X$ to become singular. We construct the process $P$ directly from the stochastic value flow and prove that the associated stochastic Riccati equation with jumps (SRE-J) admits a unique maximal strongly regular solution, which gives a closed-loop representation of the unique open-loop optimal control.  We also give sufficient conditions for uniform convexity and present indefinite SLQ examples with jumps.

\medskip

\noindent{\bf Key words:} stochastic linear-quadratic optimal control, random coefficients,  Poisson jumps, stochastic Riccati equation, closed-loop representation.

\noindent{\bf MSC codes:} 49N10, 93E20, 60H10, 49K45

\section{Introduction}
Let $(\Omega,\mathcal F,\mathbb F,\mathbb P)$ be a complete filtered
probability space on which are defined a standard Brownian motion $W$
and a Poisson random measure $N(\mathrm dt,\mathrm dz)$ with
finite characteristic measure $\lambda(\mathrm dz)$ on a
measurable space $(Z,\mathcal B(Z))$, i.e., $\lambda(Z)<\infty$.
% Hence $N$ has finitely many jumps on $[0,T]$ almost surely.
We assume that $W$ and $N$ are independent, and that $\mathbb{F}= \{\mathcal{F}_t\}_{t\ge 0}$ is the natural filtration of $W$  and  $N$ augmented by all the $\mathbb{P}$-null sets in $\mathcal{F}$, that is,
\[
\mathcal{F}_t=\sigma \left\{ W(s) ;s\le t \right\} \bigvee{\sigma \left\{ \int_0^s{\int_A{N\left( \mathrm{d}u,\mathrm{d}z \right)}}; s\le t,A\in \mathcal{B}\left( Z \right) \right\}}\bigvee{\mathcal{N}},
\]
where $\mathcal{N}$ denotes the collection of all $\mathbb{P}$-null sets in $\mathcal F$. 
% Therefore, $\mathbb{F}$ automatically satisfies the usual conditions. We can and do
% take all semimartingales to have right-continuous paths with left limits (c\`{a}dl\`{a}g).
Furthermore, the compensated version of $N (\mathrm{d} t,\mathrm{d} z)$ is defined as
$
\widetilde{N}\left( \mathrm{d}t,\mathrm{d}z \right) =N\left( \mathrm{d}t,\mathrm{d}z \right) -\lambda \left( \mathrm{d}z \right) \mathrm{d}t.
$

Consider the following controlled linear stochastic differential equation on the finite horizon $[t, T]$:
\begin{equation}\label{state}
\begin{cases}
	\mathrm{d}X(s) =\left[ A(s) X\left( s- \right) +B(s) u(s)  \right] \mathrm{d}s+\left[ C(s) X\left( s- \right) +D(s) u(s) \right] \mathrm{d}W(s) \\
	\hspace{4em} +\int_{Z}{\left[ E\left( s,z \right) X\left( s- \right) +F\left( s,z \right) u(s) \right] \widetilde{N}\left( \mathrm{d}s,\mathrm{d}z \right)}\\
	X\left( t \right) =\xi\\
\end{cases}
\end{equation}
where  $A, C:[0, T] \times \Omega \rightarrow \mathbb{R}^{n \times n}$, $B, D:[0, T] \times \Omega \rightarrow \mathbb{R}^{n \times m}$,  and $E:[0, T] \times \Omega \times Z \rightarrow \mathbb{R}^{n \times n} $, $F:[0, T] \times \Omega \times Z \rightarrow \mathbb{R}^{n \times m} $ are given matrix-valued processes, called the coefficients of the \emph{state equation} \eqref{state}. The initial time $t$ ranges over $[0,T]$, and the initial state $\xi$
is an $\mathbb R^n$-valued, $\mathcal F_t$-measurable random variable
satisfying $\mathbb E[|\xi|^2]<\infty$.
% In \eqref{state},  $X(\cdot)$ is the $\mathbb{R}^{n}$-valued state process and $u(\cdot)$ is the $\mathbb{R}^{m}$-valued control process. 
We call $u(\cdot)$ an \emph{admissible control} on $[t,T]$ if it belongs to the Hilbert space
\begin{equation}\label{admissible control}
\mathcal{U}[t,T]=L_{\mathbb{F}}^2(t,T;\mathbb{R}^m)=\bigg\{u:[t,T]\times\Omega\to\mathbb{R}^{m}\mid u(\cdot)~\text{is}~\mathbb{F}\text{-predictable},~\mathbb{E}\bigg[\int_{t}^{T}|u(s)|^{2}\dd s\bigg]<\infty\bigg\}.
\end{equation}
The cost functional associated with \eqref{state} is 
\begin{equation}\label{costL}
J\left( t,\xi ;u\left( \cdot \right) \right) =\mathbb E\bigg[ \left< GX\left( T \right) ,X\left( T \right) \right> +\int_t^T{\left< Q(s) X(s) ,X(s) \right> +2\left< S(s) X(s) ,u(s) \right> +\left< R(s) u(s) ,u(s) \right> \mathrm{d}s}\bigg],
\end{equation}
where the random variable $G: \Omega \rightarrow \mathbb{S}^{n}$ and the processes $(Q, R, S): \Omega \times[0, T]  \rightarrow\left(\mathbb{S}^{n}, \mathbb{S}^{m}, \mathbb{R}^{m \times n}\right)$ are called the weighting coefficients.  The optimal control problem for
\eqref{state} and \eqref{costL} can be stated as follows

\textbf{Problem (SLQ-J):} For any initial time $t\in[0,T]$ and any
square-integrable $\mathcal F_t$-measurable initial state $\xi$, find a control $u^*\in\mathcal U[t,T]$ such that
\begin{equation}\label{Jmin}
  J(t,\xi;u^*)
  =
  \inf_{u\in\mathcal U[t,T]} J(t,\xi;u)
  \triangleq V(t,\xi).
\end{equation}

% Under \ref{SLQH1}--\ref{SLQH2}, for any $(t,\xi)\in\mathcal D$ and $u\in\mathcal U[t,T]$, the random variable $L(t,\xi;u)$ is integrable. We define
%   \begin{align*}
%   &J(t, \xi ; u)=\mathbb{E}[L(t, \xi ; u)],\qquad \widehat J(t,\xi;u)=\mathbb E[L(t,\xi;u)\mid\mathcal F_t], \quad(t, \xi) \in \mathcal{D}, u \in \mathcal{U}[t, T]. 
%   \end{align*}
%   These two functionals measure the performance of the control $u \in \mathcal{U}[t, T]$ and are called the \emph{expected cost functional} and the \emph{conditional cost functional}, respectively.

% \subsection{Background and literature}

The study of stochastic linear-quadratic (SLQ) optimal control problems was initiated by Wonham~\cite{wonham1968matrix} in 1968. 
Under the classical \emph{standard condition}
($Q\ge0$, $G\ge0$, $R\ge\delta I_m$),
the associated Riccati equation admits a unique solution
and the optimal control has a linear state feedback representation;
see~\cite{bismut1976linear,davis1977linear,bensoussan1982lectures,
yong1999stochastic}.
It is worth noting that for the deterministic LQ problem,
Molinari~\cite{molinari1977time} showed that the conditions $Q\ge0$
and $G\ge0$ are not necessary for open-loop solvability, though
$R\ge0$ remains necessary.
In contrast, for SLQ problems driven by Brownian motion,
Chen, Li, and Zhou~\cite{chen1998stochastic} discovered that even
$R\ge0$ is not necessary for solvability.
This remarkable finding, referred to as the \emph{indefinite case},
has triggered extensive subsequent research; see,
e.g.,~\cite{lim1999stochastic,chen2000stochastic,%
ait2002indefinite,hu2003indefinite,mei2021optimal}
and the references therein.
For SLQ problems with deterministic coefficients,
Sun, Li, and Yong~\cite{sun2016open} gave a very satisfactory
treatment. They proved that the strongly regular solvability of the Riccati equation is equivalent to the uniform convexity of the cost functional, and obtained a closed-loop representation of the open-loop optimal control.

% establishing the equivalence between the strongly regular
% solvability of the Riccati equation and the uniform convexity of the
% cost functional.

For SLQ problems with \emph{random coefficients}, the Riccati equation
becomes a highly nonlinear backward SDE, usually called the
\emph{stochastic Riccati equation} (SRE).
Under the standard condition, Tang~\cite{tang2003general} first proved the
unique solvability of the matrix-valued SRE by connecting it to the stochastic
Hamilton system.
A key step in his approach is to show that the matrix-valued optimal state
$\mathbf X$ is globally invertible, so that the Riccati process can
be recovered via $P=\mathbf Y\mathbf X^{-1}$, where $\mathbf Y$ is
the matrix-valued adjoint process.
Tang~\cite{tang2015dynamic} later gave an alternative proof based on
the dynamic programming principle and the Doob--Meyer decomposition.
Under the weaker uniform convexity condition,
Sun, Xiong, and Yong~\cite{sun2021indefinite} extended the theory to
the \emph{indefinite} case with random coefficients.
Their proof also relies on the representation $P=\mathbf Y\mathbf X^{-1}$,
but requires a new and delicate technique to establish the invertibility
of $\mathbf X$ when the weighting matrices are no longer assumed positive.
Their proof strategy can be summarized as follows:
\begin{equation}\label{sunchain}
  \text{Uniform convexity}
  \;\Rightarrow\; P~\text{bounded and continuous}
  \;\Rightarrow\; \mathbf X~\text{invertible}
  \;\Rightarrow\; P=\mathbf Y\mathbf X^{-1}
  \;\Rightarrow\; \text{SRE solvable}.
\end{equation}
For SLQ problems driven only by Brownian motion, the continuity of $P$ is crucial to the stopping-time method for proving the invertibility of~$\mathbf X$.
This invertibility is then used to establish unique open-loop solvability for every initial pair and to derive the closed-loop representation of the open-loop optimal control via the SRE.

  % This invertibility yields unique open-loop solvability for
  % every initial pair and the closed-loop representation via the SRE.

On the other hand, the SLQ problem with Poisson jumps has attracted considerable attention in finance, insurance, and engineering, where jump terms naturally model abrupt changes in the state dynamics; see, e.g.,~\cite{li2018indefinite,Zhang2021,wen2022stochastic,meng2023optimal} and the references therein. Following Zhang, Dong, and Meng~\cite{zhang2020backward},
the associated \emph{stochastic Riccati equation with jumps} (SRE-J) takes the form
\begin{equation}\label{SREJ}
\begin{cases}
  \mathrm{d} P(s)=-\Big(A(s)^\top P(s-)+P(s-)A(s)+Q(s)
  +C(s)^\top P(s-)C(s)+C(s)^\top\Lambda(s)+\Lambda(s)C(s)\\
  \qquad\qquad\quad
  +\displaystyle\int_Z\Big[\zeta(s,z)E(s,z)+E(s,z)^\top\zeta(s,z)
  +E(s,z)^\top\big(\zeta(s,z)+P(s-)\big)E(s,z)\Big]
  \lambda(\mathrm{d} z)\\
  \qquad\qquad\quad
  -\widehat S(s)^\top\widehat R(s)^{-1}\widehat S(s)\Big)\mathrm{d} s
  +\Lambda(s)\mathrm{d} W(s)
  +\displaystyle\int_Z\zeta(s,z)\widetilde N(\mathrm{d} s,\mathrm{d} z),\\
  P(T)=G,
\end{cases}
\end{equation}
where \(\widehat S\) and \(\widehat R\) are defined by
\begin{equation}\label{RShat}
\begin{aligned}
  \widehat S(s)&:=B(s)^\top P(s-)+D(s)^\top\Lambda(s)
  +D(s)^\top P(s-)C(s)+S(s)\\
  &\quad+\int_Z\!\Big[F(s,z)^\top\zeta(s,z)
  +F(s,z)^\top\big(\zeta(s,z)+P(s-)\big)E(s,z)\Big]\lambda(\mathrm{d}z),\\
  \widehat R(s)&:=R(s)+D(s)^\top P(s-)D(s)
  +\int_Z F(s,z)^\top\big(\zeta(s,z)+P(s-)\big)F(s,z)\lambda(\mathrm{d}z).
\end{aligned}
\end{equation}
% Here, \(P_-(s):=P(s-)\) denotes the left limit of the c\`adl\`ag process \(P\).
A solution of the SRE-J \eqref{SREJ} is a triplet of \(\mathbb S^n\)-valued processes
\((P,\Lambda,\zeta)\).
When \(E=F=0\), the SRE-J reduces to the SRE studied
in~\cite{tang2003general,tang2015dynamic,sun2021indefinite}.
Under the standard condition,
Zhang, Dong, and Meng~\cite{zhang2020backward} proved the unique solvability
of the SRE-J and constructed the optimal feedback control by combining
dynamic programming, the Doob--Meyer decomposition, and an inverse flow
technique.
For the indefinite case with jumps, Li, Wu, and
Yu~\cite{li2018indefinite} obtained partial results under a
relax-compensator assumption that transforms the problem into a standard
one.
In a broader context, Meng et al.~\cite{meng2023optimal} studied stochastic HJB equations with jumps
for non-Markovian control problems, but their framework does not cover the LQ problem due to
the unbounded control region.
These works motivate the following two questions.

\smallskip
\textbf{(Q1)}\; \emph{Is the standard condition imposed
in~\cite{zhang2020backward} necessary for the unique open-loop
solvability of the SLQ problem and the solvability of the SRE-J \eqref{SREJ},
or does the uniform convexity condition suffice?}
\smallskip

The following example shows that the standard condition is not
necessary.

\begin{exam}[SRE-J solvable with $G<0$]\label{exam:Gneg}
Let $n=m=1$, $\lambda(Z)=1$, and take
$A=B=C=E=S=0$, $D=F=1$, $G=-\tfrac34$, $Q=0$, $R(s)=s^2+2$.
Then the state equation is 
\[
  \mathrm dX(s)=u(s)\,\mathrm dW(s)
  +\!\int_Z u(s)\,\widetilde N(\mathrm ds,\mathrm dz),
  \qquad X(t)=\xi,
\]
and the cost functional becomes
\(
  J(t,\xi;u)
  =\mathbb E\big[-\tfrac34\,X(T)^2
  +\!\int_t^T(s^2+2)\,|u(s)|^2\,\mathrm ds\big].
\)
Applying It\^o's formula to $s\mapsto-\tfrac34|X(s)|^2$ and adding the running cost gives
\begin{equation}\label{eq:Gneg_square}
  J(t,\xi;u)=-\frac34\,\mathbb E[\xi^2]
  +\mathbb E\bigg[\int_t^T\Big(s^2+\frac12\Big)|u(s)|^2\,\mathrm ds\bigg],
\end{equation}
which implies that the optimal control is $u^* \equiv 0$. Moreover, 
the SRE-J \eqref{SREJ} for this problem reduces to
\begin{equation}\label{eq:SRE-ex1}
\mathrm dP(s)=
\frac{\big(\Lambda(s)+\int_Z\zeta(s,z)\lambda(\mathrm dz)\big)^2}
{s^2+2+2P(s-)+\int_Z\zeta(s,z)\lambda(\mathrm dz)}\,\mathrm ds
+\Lambda(s)\mathrm dW(s)
+\int_Z\zeta(s,z)\widetilde N(\mathrm ds,\mathrm dz),
\quad
P(T)=-\frac34.
\end{equation}
It is easy to verify that $(P,\Lambda,\zeta)\equiv(-\tfrac34,0,0)$ solves \eqref{eq:SRE-ex1}.
\end{exam}

This example shows that the SRE-J is solvable and the SLQ problem
is uniquely open-loop solvable even though the standard condition
fails. Moreover, setting $\xi=0$ 
in~\eqref{eq:Gneg_square} gives
$J(t,0;u)\ge\tfrac12\,\mathbb E[\int_t^T\!|u|^2\mathrm ds]$,
so the cost functional is uniformly convex although $G<0$.
This motivates proving the solvability of the SRE-J under the
uniform convexity condition.
A natural strategy is to extend the method~\eqref{sunchain} to SLQ problems with Poisson jumps, which leads to the second question.
 
\smallskip
\textbf{(Q2)}\; \emph{Can the global fundamental matrix representation
$P=\mathbf Y\mathbf X^{-1}$, which is the cornerstone of the SRE
solvability proof in~\cite{sun2021indefinite}, be extended to SLQ problems with Poisson jumps?}
\smallskip

In fact, substituting the feedback control $u(s)=\Theta(s)X(s-)$ into~\eqref{state}
shows that at each jump time $T_j$ with mark $Z_j$,
\[
  X(T_j)=\bigl[I_n+E(T_j,Z_j)+F(T_j,Z_j)\Theta(T_j)\bigr]\,X(T_j-).
\]
We call $I_n+E(s,z)+F(s,z)\Theta(s)$ the \emph{closed-loop jump matrix}.
The following example shows that this matrix can be singular under the
optimal control, so the answer is negative.

\begin{exam}[Failure of invertibility of $\mathbf X$]
\label{exam:intro_failure}\label{exam:counterexample2}
Let $n=m=1$, $Z=\{z_0\}$, $\lambda(Z)=\tfrac12$, and take
$A=C=D=S=0$, $B=1$, $E=0$, $F=1$, $R\equiv1$, $Q\equiv G\equiv2$.
Then the state equation is 
\[
  \mathrm dX(s)=u(s)\,\mathrm ds
  + u(s)\,\widetilde N(\mathrm ds,\{z_0\}),
  \quad X(0)=\xi,
\]
and the cost functional is
\(
  J(0,\xi;u)
  =\mathbb E\Big[2X(T)^2
  +\!\int_0^T\!\big(2X(s)^2+|u(s)|^2\big)\mathrm ds\Big].
\)
Since $Q=G=2$ and $R=1$, the standard condition holds.
Applying It\^o's formula to $s\mapsto 2|X(s)|^2$ and adding the
running cost, we obtain
\begin{equation}
  J(0,\xi;u)
  =2 \mathbb E[\xi^2]
  +2\,\mathbb E\bigg[\int_0^T\!\big(u(s)+X(s-)\big)^2\mathrm ds\bigg].
\end{equation}
Hence the unique optimal control is $u^*(s)=-X^*(s-)$, and the
corresponding feedback coefficient is $\Theta=-1$. Then the
closed-loop jump matrix is \(1+E+F\Theta=0\), and consequently
\(X^*(T_j)=0\) at every Poisson jump time \(T_j\). Since
\(\mathbb P(T_1\le T)>0\), the optimal state fundamental matrix is
singular with positive probability.
Nevertheless, the corresponding SRE-J \eqref{SREJ} for this problem is
\begin{equation}\label{eq:SRE-ex2}
\mathrm dP(s)=\bigg(
\frac{\big(P(s-)+\tfrac12\zeta(s)\big)^2}
{1+\tfrac12 P(s-)+\tfrac12\zeta(s)}-2
\bigg)\mathrm ds
+\Lambda(s)\,\mathrm dW(s)
+\zeta(s)\,\widetilde N(\mathrm ds,\{z_0\}),
\quad P(T)=2.
\end{equation}
It is easy to verify that $(P,\Lambda,\zeta)\equiv(2,0,0)$ solves the SRE-J \eqref{eq:SRE-ex2}.
\end{exam}

This example shows that the global fundamental matrix representation may fail when the state equation contains Poisson jumps.
Indeed, the additional inverse flow condition
\(I_n+E\ge\delta I_n\) discussed in
\cite[Remark~3.1]{zhang2020backward} is satisfied here, since
$I_n+E=I_n$, whereas the optimal closed-loop jump matrix
\(I_n+E+F\Theta\) is singular.
Thus the proof strategy~\eqref{sunchain}
breaks at the invertibility step in the presence of Poisson jumps.
This is the first difficulty in the
jump-diffusion problem: the
nonsingularity of the closed-loop jump matrix \(I_n+E+F\Theta\) depends on
the unknown feedback coefficient \(\Theta\).
Consequently, the Riccati process cannot be constructed
through a global
representation of the form \(P=\mathbf Y\mathbf X^{-1}\).

The second difficulty is to establish the uniform positive definiteness of
$\widehat R$, which is needed both for the
SRE-J~\eqref{SREJ} to be well posed and for the feedback coefficient
$\Theta=-\widehat R^{-1}\widehat S$ to exist.
Under the standard condition in~\cite{zhang2020backward}, this follows
from $R\ge\delta I_m$ and the nonnegativity of~$P$.
However, for the present
indefinite problem, $Q$, $G$, and $R$ need not be positive
semidefinite, $S$ may be nonzero, and $P\ge0$ is unavailable.
 Furthermore, $\widehat R$ depends on the unknown jump martingale component~$\zeta$ of the SRE-J; see
\eqref{RShat}.
 Hence \(\widehat R\ge\delta I_m\) must be derived from the
uniform convexity of the control problem itself.

% Moreover, in the diffusion case,
% \[
%   \widehat R=R+D^\top P D
% \]
% involves only \(P\) among the unknown Riccati components, whereas
% \eqref{RShat} also contains the unknown jump martingale component
% \(\zeta\) Furthermore, $\widehat R$ depends on the unknown jump martingale
% component~$\zeta$ of the SRE-J; see
% \eqref{RShat}

% The purpose of this paper is to answer the two questions posed above.
% We prove that the uniform convexity of the cost functional is sufficient
% for the SRE-J~\eqref{SREJ} to admit a unique strongly regular solution.
% Consequently, the open-loop optimal control admits the closed-loop
% representation
% \(
%   u^*(s)=-\widehat R(s)^{-1}\widehat S(s)\,X^*(s-).
% \)
% The precise statement is given in \thmref{thm:SREsolve}.

% Our proof proceeds in three steps.
% First, we establish a Hilbert space formulation that yields the
% quadratic representation of the cost functional and the
% characterization of open-loop optimality
% (\secref{sec:formulation}--\secref{sec:5open-loop}).
% Second, we construct the stochastic value flow and aggregate it into a
% bounded c\`adl\`ag special semimartingale via a stopping-time extension
% and the $\mathcal T$-system framework
% of~\cite{el1981aspects,zhang2020backward} (\secref{sec:Value Flow}).
% Third, we derive $\widehat R\ge\delta I_m$ by a small-interval localization
% argument and identify the Riccati drift on inter-jump intervals
% (\secref{sec:Riccati}).

The main results and contributions are summarized as follows.
\begin{enumerate}[label=(\roman*)]
\item
  Under the uniform convexity
  condition, we prove that Problem (SLQ-J) is uniquely open-loop
  solvable and that the SRE-J admits a unique maximal strongly regular solution. The unique
  open-loop optimal control admits the closed-loop representation
  \(
    u^*(s)=-\widehat R(s)^{-1}\widehat S(s)X^*(s-)
  \); see \thmref{thm:SREsolve}.
  We construct \(P\) directly from the stochastic value flow and
  identify its drift on subintervals between adjacent Poisson jump times. This construction
  requires neither global invertibility of the optimal state
  fundamental matrix nor any positive condition on \(I_n+E\).

% For (Q1), we prove that uniform convexity of the cost functional
%   implies the uniform positive definiteness of $\widehat R$ and the
%   strong regularity of the SRE-J, without imposing positive
%   semidefiniteness on $Q$, $G$, or $R$ and allowing $S$ to be nonzero.
%   The key point is that the positivity of $\widehat R$ cannot be read
%   off from the coefficients in the indefinite case. We derive it
%   instead from the control problem itself by a small-interval
%   localization argument.

\item
We establish the uniform positive definiteness of $\widehat R$ by a
small-interval localization method with spike controls, and the Doob--Meyer decomposition of submartingales. A new
difficulty caused by the jumps is that a single Poisson jump occurs on
an interval of length~$h$ with probability $O(h)$, so the fourth
moment of the state process corresponding to a spike control is only $O(h)$ rather than $O(h^2)$
as in the diffusion case. This difficulty is overcome by decomposing
the state into its continuous and jump parts; see \lemref{lem:localized_error}.

\item
We obtain verifiable sufficient conditions for uniform convexity and
present two examples beyond the standard condition. In the first
example, the terminal weight $G$ is indefinite and the control weight
 $R<0$. In the second example, we use the
Clark--Ocone formula and Malliavin derivatives on the Wiener--Poisson
space to construct an explicit random solution $(P,\Lambda,\zeta)$ of
the SRE-J, where $P$ is not positive definite and
$\Lambda\not\equiv0$, $\zeta\not\equiv0$.
\end{enumerate}

The rest of the paper is organized as follows.
\secref{sec:formulation} collects notation, function spaces, and
preliminary estimates for FSDEs and BSDEs with jumps.
\secref{sec:5open-loop} derives the Hilbert-space characterization of
open-loop solvability.
\secref{sec:Value Flow} constructs and aggregates the stochastic value
flow.
\secref{sec:Riccati} identifies the SRE-J and proves the closed-loop
representation.
\secref{sec:sufficient} gives sufficient conditions for uniform
convexity, \secref{sec:example} presents illustrative examples, and
\secref{sec:conclusion} concludes the paper.

\section{Notation and Preliminaries}\label{sec:formulation}

Let $\mathcal{P}$ be the $\mathbb{F}$-predictable $\sigma$-field on $[0, T] \times \Omega$ and denote
$ \tilde{\mathcal{P}}:=\mathcal{B}(Z) \otimes \mathcal{P}.$ 
We use the following notation throughout:
\begin{itemize}
  \item $M^{\top}:$ the transpose of any matrix or vector $M$;
  % \item $|a|:=\sqrt{\sum_{i} a_{i}^{2}}$ for any vector $a=\left(a_{i}\right)$;
  \item $|\cdot|:$ the Euclidean norm for vectors and the Frobenius norm
  for matrices;
  % \item $|M|:=\sqrt{\sum_{i, j} m_{i j}^{2}}$ for any matrix or vector $M=\left(m_{i j}\right)$;
  % \item $\mathbb{R}^{m}: m$ dimensional real Euclidean space;
  \item $\mathbb{E}_t[\eta]:=\mathbb{E}[\eta\mid \mathcal{F}_t]$ for any integrable random variable $\eta$;
  \item $Y_-(s):=Y(s-)$: the left limit of a c\`{a}dl\`{a}g process $Y$ at $s\in(0,T]$;
  \item $\mathcal{T}[a,b]:$ the set of all $\mathbb{F}$-stopping times $\tau$ taking values in $[a,b]$;
  % \item $\mathcal{S}[a,b):$ the set of all $\mathbb{F}$-stopping times $\sigma$ taking values in $[a,b)$.
\end{itemize}
Next, for any $t \in[0, T)$ and Euclidean space $\mathbb{H}=\mathbb{R}^{n}, \mathbb{R}^{n \times m}$ or $\mathbb{S}^{n}$, we set
\begin{itemize}
  \item $L_{\mathcal F_t}^{\infty}(\Omega;\mathbb H)$ is the space of
  $\mathcal F_t$-measurable, essentially bounded $\mathbb H$-valued random variables;
  \item $L_{\mathcal F_t}^{2}(\Omega;\mathbb H)$ is the space of
  $\mathcal F_t$-measurable $\mathbb H$-valued random variables $\xi$ such that
  $\mathbb E[|\xi|^2]<\infty$;
  \item $S_{\mathbb F}^{2}(t,T;\mathbb H)$ is the space of
  $\mathbb F$-adapted c\`adl\`ag $\mathbb H$-valued processes $\phi$ such that
  \[
    \mathbb E\Big[\sup_{t\le s\le T}|\phi(s)|^2\Big]<\infty;
  \]
  \item $S_{\mathbb F}^{\infty}(t,T;\mathbb H)$ is the space of
  essentially bounded $\mathbb F$-adapted c\`adl\`ag $\mathbb H$-valued processes;
  \item $L_{\mathbb F}^{2}(t,T;\mathbb H)$ is the space of
  $\mathbb F$-predictable $\mathbb H$-valued processes $\phi$ such that
  \[
    \mathbb E\Big[\int_t^T |\phi(s)|^2\,\mathrm ds\Big]<\infty;
  \]
  \item $L_{\mathbb F}^{\infty}(t,T;\mathbb H)$ is the space of
  essentially bounded $\mathbb F$-predictable $\mathbb H$-valued processes;
  \item $G_{\mathbb F}^{2}(t,T,\lambda;\mathbb H)$ is the space of
  $\tilde{\mathcal P}$-measurable $\mathbb H$-valued processes $r$ such that
  \[
    \mathbb E\Big[\int_t^T\int_Z |r(s,z)|^2\lambda(\mathrm dz)\,\mathrm ds\Big]<\infty;
  \]
  \item $G_{\mathbb F}^{\infty}(t,T,\lambda;\mathbb H)$ is the space of
  essentially bounded $\tilde{\mathcal P}$-measurable $\mathbb H$-valued processes.
\end{itemize}
Moreover, for elements of $L_{\mathcal F_t}^{\infty}(\Omega;\mathbb H)$, $S_{\mathbb F}^{\infty}(t,T;\mathbb H)$ and $L_{\mathbb F}^{\infty}(t,T;\mathbb H)$, we use the 
notation $\|\cdot\|_\infty$ for the corresponding essential supremum norm if no confusion occurs.

For later use, the set of admissible initial pairs is denoted by
\(
  \mathcal D:=\bigl\{(t,\xi)\mid t\in[0,T],\;
  \xi\in L^2_{\mathcal F_t}(\Omega;\mathbb R^n)\bigr\}.
\)
We impose the following standing assumptions throughout the paper.
\begin{enumerate}[label=\bfseries (H\arabic*)]
  \item \label{SLQH1} The coefficients of the state equation \eqref{state} are all bounded processes, i.e.,
  \begin{equation*}
  \begin{aligned}
    &A,C\in L_{\mathbb F}^{\infty}(0,T;\mathbb R^{n\times n}),~~
    B,D\in L_{\mathbb F}^{\infty}(0,T;\mathbb R^{n\times m}),~~
    E\in G_{\mathbb F}^{\infty}(0,T,\lambda;\mathbb R^{n\times n}),~~
    F\in G_{\mathbb F}^{\infty}(0,T,\lambda;\mathbb R^{n\times m}).
  \end{aligned}
  \end{equation*}
  \item \label{SLQH2} The weighting coefficients of the cost functional \eqref{costL} are bounded, 
  \begin{equation*}
    G\in L_{\mathcal{F}_{T}}^{\infty}\left( \Omega ;\mathbb{S}^n \right) ,\quad Q\in L_{\mathbb{F}}^{\infty}\left( 0,T;\mathbb{S}^n \right) ,\quad S\in L_{\mathbb{F}}^{\infty}\left( 0,T;\mathbb{R}^{m\times n} \right) ,\quad R\in L_{\mathbb{F}}^{\infty}\left( 0,T;\mathbb{S}^m \right).
    \end{equation*}
\end{enumerate}
  Note that \ref{SLQH2} does not impose any definiteness or nonnegativity conditions on $Q$, $R$, or $G$.  Under \ref{SLQH1}--\ref{SLQH2}, for any $(t,\xi)\in\mathcal D$ and 
  $u\in\mathcal U[t,T]$, the random cost
\[
  L(t,\xi;u):=\langle GX(T),X(T)\rangle
  +\int_t^T\big[\langle QX,X\rangle
  +2\langle SX,u\rangle+\langle Ru,u\rangle\big]\,\mathrm ds
\]
is integrable, and $J(t,\xi;u)=\mathbb E[L(t,\xi;u)]$.
To handle random initial times (needed for the dynamic programming in \secref{sec:Value Flow}), we also consider the conditional
cost functional
  \[
    \widehat J(t,\xi;u) := \mathbb{E}[L(t,\xi;u)\mid\mathcal{F}_t],
    \qquad (t,\xi)\in\mathcal{D},\; u\in\mathcal{U}[t,T].
  \]
The corresponding control problem is formulated as follows.

 \textbf{Problem} $\widehat{(\textbf{SLQ-J})}:$ For any given initial
pair $(t,\xi)\in\mathcal D$, find a control
$u^*\in\mathcal U[t,T]$ such that
\begin{equation}\label{Jhatmin}
  \widehat J(t,\xi;u^*)
  =
  \operatorname*{ess\,inf}_{u\in\mathcal U[t,T]}
  \widehat J(t,\xi;u)
  \equiv \widehat V(t,\xi).
\end{equation}
The equivalence between Problems (SLQ-J) and 
$\widehat{(\text{SLQ-J})}$ will be established 
in \thmref{thm:SLQhat_equiv}.
Since the space $L_{\mathcal{F}_{t}}^{2}\left(\Omega ; \mathbb{R}^{n}\right)$ grows with the initial time $t$, we call $(t, \xi) \mapsto V(t, \xi)$ the \emph{value flow} of Problem (SLQ-J) and $(t, \xi) \mapsto \widehat{V}(t, \xi)$ the \emph{stochastic value flow} of Problem ($\widehat{\text{SLQ-J}}$).

Next, for $\Theta\in\boldsymbol{\Theta}[t, T]=L_{\mathbb{F}}^{2}\left(t, T ; \mathbb{R}^{m \times n}\right)$, 
the feedback control $ u(s)=\Theta(s)X(s-)$ leads to the closed-loop state equation
\begin{equation}\label{closestate}
\left\{
\begin{aligned}
  \dd X(s)&=(A+B\Theta)X(s-)\dd s+(C+D\Theta)X(s-)\dd W(s)+\int_{Z}(E+F\Theta)X(s-)\,\widetilde{N}(\dd s,\dd z),\\
  X(t)&=\xi.
\end{aligned}
\right.
\end{equation}

\begin{defn}[Open-loop optimal control and closed-loop representation]\label{def:open}
  \begin{enumerate}[label=(\roman*)]
    \item For a given initial pair $(t,\xi)\in\mathcal D$, a control
      $u^*(\cdot) \equiv u^*(\cdot; t, \xi) \in \mathcal{U}[t, T]$ is called an open-loop optimal control of Problem (SLQ-J) (respectively, {Problem} $(\widehat{SLQ-J})$) if
    \[
    J(t, \xi ; u^*) \le J(t, \xi ; u), \quad \forall u \in \mathcal{U}[t, T]\quad (\text {respectively},~ \widehat{J}\left(t, \xi ;u^*\right) \leq \widehat{J}(t, \xi ; u), \text { a.s. }~\forall u \in \mathcal{U}[t, T] )
    \]
    \item An open-loop optimal control $u^*$ is said to admit a closed-loop
        representation if there exists
        $\Theta^*\in\boldsymbol{\Theta}[t,T]$ such that
        \[
          u^*(s)=\Theta^*(s) X^*(s-),
          \qquad \text{a.e. }s\in[t,T],\ \text{a.s.},
        \]
        where $X^*$ is the solution to the closed-loop system \eqref{closestate}  with $\Theta=\Theta^*$.
  \end{enumerate}
\end{defn}

  We further impose the following uniform convexity condition.
  \begin{enumerate}[label=\bfseries (H3)]
  \item \label{N_0uniformalconvex2-I} There exists $\varepsilon>0$ such that
  \begin{equation*}
    J(0,0 ; u) \ge \varepsilon \mathbb{E} \int_{0}^{T}|u(s)|^{2} \mathrm{d} s, 
    \quad \forall u \in \mathcal{U}[0, T].
  \end{equation*}
  \end{enumerate}
  This condition is strictly weaker than the standard 
  condition; see \secref{sec:sufficient} for sufficient conditions and 
  \secref{sec:example} for examples.

We first recall standard well-posedness estimates for linear forward SDEs (FSDEs) and backward SDEs (BSDEs) with random coefficients. Consider the linear FSDE
\begin{equation}\label{FSDE}
\begin{cases}
	\mathrm{d}X(s)=[A(s)X(s-)+b(s)]\mathrm{d}s+[C(s)X(s-)+\sigma (s)]\mathrm{d}W(s)\\
  \hspace{4.2em}+\int_Z{\left[ E\left( s,z \right) X\left( s- \right) +l\left( s,z \right) \right] \widetilde{N}\left( \mathrm{d}s,\mathrm{d}z \right)},\quad s\in [t,T],\\
	X(t)=\xi.
\end{cases}
\end{equation}
and the linear BSDE
\begin{equation}\label{BSDE}
\begin{cases}
\mathrm{d}Y(s)=-\left[
  A(s)^\top Y(s)+C(s)^\top Z(s)
  +\int_Z E(s,z)^\top r(s,z)\lambda(\mathrm dz)+\varphi(s)
  \right]\mathrm ds\\
	\hspace{4.2em}+Z(s) \mathrm{d}W(s) +\int_Z{r\left( s,z \right) \widetilde{N}\left( \mathrm{d}s,\mathrm{d}z \right)},\quad s\in [t,T]\\
	Y\left( T \right) =\eta.
\end{cases}
\end{equation}
We have the following result.
\begin{lem} \label{existunique}
  Let  $A,C\in L_{\mathbb{F}}^{\infty}(t, T ; \mathbb{R}^{n\times n})$ and $E\in G_{\mathbb{F}}^{\infty}\left( t,T,\lambda ;\mathbb{R}^{n\times n} \right) $. Then the following assertions hold:
 \begin{enumerate}[label=(\roman*)]
   \item For every initial pair $(t, \xi) \in \mathcal{D}$ and $b, \sigma \in L_{\mathbb{F}}^{2}\left(t, T ; \mathbb{R}^{n}\right), l\in G_{\mathbb{F}}^{2}\left( t,T,\lambda ;\mathbb{R}^{n} \right)  $, FSDE \eqref{FSDE} admits a unique adapted solution $X \in S_{\mathbb{F}}^{2}(t,T;\mathbb{R}^{n})$. In addition, there is a positive constant $K$, depending only on $A, C,E$ and $T$, such that
   \begin{equation}\label{FSDEestimate}
   \mathbb{E}\left[ \mathop {\mathrm{sup}} \limits_{t\le s\le T}|X(s)|^2 \right] \le K\mathbb{E}\left[ |\xi |^2+\int_t^T{|}b(s)|^2\mathrm{d}s+\int_t^T{|}\sigma (s)|^2\mathrm{d}s+\int_t^T{\int_Z{\left| l\left( s,z \right) \right|^2\lambda \left( \mathrm{d}z \right)}}\mathrm{d}s \right]. 
   \end{equation}

   \item For every terminal state $\eta \in L_{\mathcal{F}_{T}}^{2}\left(\Omega ; \mathbb{R}^{n}\right)$ and  $\varphi \in L_{\mathbb{F}}^{2}\left(t, T ; \mathbb{R}^{n}\right)$, BSDE \eqref{BSDE} admits a unique adapted solution $(Y,Z,r)\in S_{\mathbb{F}}^{2}(t,T;\mathbb{R}^{n})\times L_{\mathbb{F}}^{2}(t,T;\mathbb{R}^{n})\times G_{\mathbb{F}}^{2}\left( t,T,\lambda ;\mathbb{R}^{n} \right)$. In addition, there is a positive constant $K$, depending only on $A, C,E$ and $T$, such that
   \begin{equation}\label{BSDEestimate}
   \mathbb{E}\left[ \mathop {\mathrm{sup}} \limits_{t\le s\le T}|Y(s)|^2+\int_t^T{|}Z(s)|^2\mathrm{d}s+\int_t^T{\int_Z{|r\left( s,z \right) |^2}\lambda \left( \mathrm{d}z \right)}\mathrm{d}s \right] \le K\mathbb{E}\left[ |\eta |^2+\left( \int_t^T{|}\varphi (s)|ds \right) ^2 \right]. 
  \end{equation}  
 \end{enumerate}
\end{lem}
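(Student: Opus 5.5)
Both parts follow the standard contraction-mapping scheme for Lipschitz SDEs and BSDEs driven by $W$ and $\widetilde N$. The a priori estimates come from It\^o's formula for jump processes, the Burkholder--Davis--Gundy inequality, and Gronwall's lemma. Since $\lambda(Z)<\infty$ and $A,C,E$ are bounded, every coefficient is uniformly Lipschitz with constant depending only on $\|A\|_\infty,\|C\|_\infty,\|E\|_\infty$ and $\lambda(Z)$. I track only that dependence.

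\textbf{Part (i).} Define the map $\Gamma$ on $S^2_{\mathbb F}(t,T;\mathbb R^n)$ sending $x$ to the right-hand side of \eqref{FSDE} with $X(s-)$ replaced by $x(s-)$. The stochastic integrals are well defined and square integrable because $\mathbb E\int_t^T\int_Z|E x_-+l|^2\lambda(\mathrm dz)\mathrm ds\le K\mathbb E[\sup|x|^2+\int\int|l|^2\lambda\,\mathrm ds]$.

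For two inputs, Doob's (or BDG) inequality applied to the $\mathrm dW$ and $\widetilde N$ integrals, together with Cauchy--Schwarz on the $\mathrm ds$ integral, gives
\[
\mathbb E\sup_{t\le r\le s}|\Gamma x-\Gamma x'|^2\le K\int_t^s\mathbb E\sup_{t\le v\le r}|x-x'|^2\,\mathrm dr .
\]
Iterating this bound (or using the weighted norm $\mathbb E\sup_s e^{-\beta s}|\cdot|^2$ with $\beta$ large) shows that some power of $\Gamma$ is a contraction, which yields existence and uniqueness.

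For \eqref{FSDEestimate}, apply the same inequalities to $X$ itself. This bounds $\phi(s):=\mathbb E\sup_{t\le r\le s}|X(r)|^2$ by $K\mathbb E[|\xi|^2+\int|b|^2+\int|\sigma|^2+\int\int|l|^2\lambda]+K\int_t^s\phi$. To know a priori that $\phi<\infty$, localize with stopping times, or note it directly from $X\in S^2$. Gronwall then gives the estimate.

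\textbf{Part (ii).} Existence and uniqueness follow from the standard result for Lipschitz BSDEs with jumps, e.g.\ the martingale-representation-plus-fixed-point argument of Tang--Li. The representation theorem for $(W,\widetilde N)$ on the natural augmented filtration supplies $(Z,r)$ once $Y$ is given as a conditional expectation. The generator $f(s,y,z,r)=A^\top y+C^\top z+\int_Z E^\top r\,\lambda(\mathrm dz)+\varphi$ is Lipschitz in $(y,z,r)$ with respect to $|y|+|z|+\|r\|_{L^2(\lambda)}$, because $|\int_Z E^\top r\,\lambda|\le\|E\|_\infty\lambda(Z)^{1/2}\|r\|_{L^2(\lambda)}$. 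A contraction in the norm
\[
\mathbb E\int_t^T e^{\beta s}\Big(|Y|^2+|Z|^2+\int_Z|r|^2\lambda\Big)\mathrm ds
\]
with $\beta$ large then closes the fixed point.

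The main point needing care is the sharper estimate \eqref{BSDEestimate}, whose right-hand side involves $(\int|\varphi|)^2$ rather than $\int|\varphi|^2$. Apply It\^o's formula to $e^{\beta s}|Y(s)|^2$ on $[s,T]$. The jump part contributes $\int_Z|r|^2N(\mathrm ds,\mathrm dz)$, whose compensator yields the good term $\int\int|r|^2\lambda$. For the linear parts, use Young's inequality and absorb them with $\beta$ large:
\[
2\langle Y,C^\top Z+\textstyle\int_Z E^\top r\lambda\rangle\le \tfrac12(|Z|^2+\|r\|^2_{L^2(\lambda)})+K|Y|^2 .
\]
Keep the $\varphi$ term as $2\int_s^T|Y||\varphi|\le 2\sup|Y|\int|\varphi|$. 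Taking expectations gives a bound for $\mathbb E\int(|Z|^2+\|r\|^2)$ by $K\mathbb E[|\eta|^2+\sup|Y|\int|\varphi|]$.

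Next, take the supremum in $s$ and use BDG for the martingale terms $\int\langle Y_-,Z\,\mathrm dW\rangle$ and $\int\int(2\langle Y_-,r\rangle+|r|^2)\widetilde N$. The resulting cross terms are bounded by $\tfrac14\mathbb E\sup|Y|^2+K\mathbb E\int(|Z|^2+\|r\|^2)$. Finally, $2\sup|Y|\int|\varphi|\le\tfrac14\sup|Y|^2+4(\int|\varphi|)^2$. Absorbing the $\sup|Y|^2$ terms, after localizing to guarantee finiteness, gives \eqref{BSDEestimate}.

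The delicate step is controlling the quadratic-variation term of the compensated-jump martingale $\int\int|r|^2\widetilde N$ inside BDG. I would bound its square-root bracket by $\big(\int\int|r|^4N\big)^{1/2}\le\int\int|r|^2N$, whose expectation equals $\mathbb E\int\int|r|^2\lambda\,\mathrm ds$, already controlled in the previous step.
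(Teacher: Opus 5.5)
Your proposal is correct and follows essentially the same route as the paper. The paper cites Tang--Li for existence and uniqueness and says both estimates follow from It\^o's formula, the Burkholder--Davis--Gundy inequality, and Gronwall's inequality; you supply the details it omits. The one slip is cosmetic: once your bound on $\mathbb E\int(|Z|^2+\|r\|_{L^2(\lambda)}^2)$ is substituted into the supremum estimate, the coefficient of $\sup|Y|\int|\varphi|$ is a generic $K$, not $2$, so the Young split should read $K\sup|Y|\int|\varphi|\le\frac14\sup|Y|^2+K^2(\int|\varphi|)^2$, which does not affect the conclusion.
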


\begin{proof}
  The proof of the existence and uniqueness can be found in \cite{li2009stochastic,tang1994necessary}. 
  The estimates \eqref{FSDEestimate} and \eqref{BSDEestimate} follow from
  It\^o's formula, the Burkholder--Davis--Gundy inequality, and Gronwall's inequality.
\end{proof}

For later use, we introduce the following Lyapunov BSDE for
$\mathbb S^n$-valued processes on $[0,T]$:
\begin{equation}\label{Lyapunov}
\begin{cases}
  \mathrm dM(s)=-\Big[
    M(s)A(s)+A(s)^\top M(s)+C(s)^\top M(s)C(s)
    +\Sigma(s)C(s)+C(s)^\top\Sigma(s)+Q(s) \\
  \hspace{5em}
    +\int_Z[
      \Gamma(s,z)E(s,z)+E(s,z)^\top\Gamma(s,z)
      +E(s,z)^\top\big(\Gamma(s,z)+M(s)\big)E(s,z)
    ]\lambda(\mathrm dz)
  \Big]\mathrm ds\\
  \hspace{4em}
  +\Sigma(s)\mathrm dW(s)
  +\int_Z\Gamma(s,z)\widetilde N(\mathrm ds,\mathrm dz),\\
  M(T)=G.
\end{cases}
\end{equation}
By \lemref{existunique}, BSDE
\eqref{Lyapunov} admits a unique adapted solution
\(
  (M,\Sigma,\Gamma)\in
  S_{\mathbb F}^{2}(0,T;\mathbb S^n)
  \times L_{\mathbb F}^{2}(0,T;\mathbb S^n)
  \times G_{\mathbb F}^{2}(0,T,\lambda;\mathbb S^n)
\) under \ref{SLQH1}--\ref{SLQH2}.
The following proposition shows that the first component $M$ is in fact
bounded. Throughout the paper, $K$ denotes a positive constant which may vary
from line to line.

% By \lemref{existunique}, BSDE \eqref{Lyapunov} admits a unique solution $(M,\Sigma,\Gamma)\in S_{\mathbb{F}}^{2}(t,T;\mathbb{R}^{n})\times L_{\mathbb{F}}^{2}(t,T;\mathbb{R}^{n})\times G_{\mathbb{F}}^{2}\left( t,T,\lambda ;\mathbb{R}^{n} \right)$ under \ref{SLQH1}-\ref{SLQH2}. Furthermore, $M$ is uniformly bounded, as shown by the following proposition. Throughout the paper, $K$ denotes a positive constant that may vary from line to line.

\begin{prop}\label{prop:Mbouned}
  Under \ref{SLQH1}-\ref{SLQH2}, the first component $M$ of the adapted solution $(M,\Sigma,\Gamma)$ to the BSDE \eqref{Lyapunov} is bounded.
\end{prop}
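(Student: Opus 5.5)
The plan is to represent $M$ through the fundamental matrix of the homogeneous state equation. Then $\langle M(t)\xi,\xi\rangle$ becomes a conditional expectation of a quadratic functional of a linear SDE. That functional is bounded by the conditional second moment of the state, which is bounded by $|\xi|^2$ uniformly in $\omega$.

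Fix $t\in[0,T)$ and $x\in\mathbb R^n$ (deterministic). Let $\Phi$ solve, on $[t,T]$, the homogeneous equation
\[
\mathrm d\Phi(s)=A\Phi(s-)\,\mathrm ds+C\Phi(s-)\,\mathrm dW(s)+\int_Z E\,\Phi(s-)\,\widetilde N(\mathrm ds,\mathrm dz),\qquad \Phi(t)=I_n ,
\]
and set $X(s)=\Phi(s)x$. Apply It\^o's formula to $\langle M(s)X(s),X(s)\rangle$ on $[t,T]$. The cross terms with $\Sigma$ and $\Gamma$ and the jump quadratic terms $E^\top(\Gamma+M)E$ are exactly those appearing in the driver of \eqref{Lyapunov}, so they cancel against the drift. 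What remains is
\[
\langle M(t)x,x\rangle=\mathbb E_t\Big[\langle GX(T),X(T)\rangle+\int_t^T\langle QX,X\rangle\,\mathrm ds\Big].
\]
Here the local martingale terms are true martingales. This holds because $X\in S^2_{\mathbb F}$, $(M,\Sigma,\Gamma)$ lies in the $S^2\times L^2\times G^2$ spaces, and $\lambda(Z)<\infty$. If needed, I would localize with stopping times and pass to the limit using dominated convergence and the integrability from \lemref{existunique}.

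The key estimate is a conditional moment bound:
\[
\mathbb E_t\Big[\sup_{t\le s\le T}|X(s)|^2\Big]\le K|x|^2 \quad\text{a.s.},
\]
with $K$ depending only on $\|A\|_\infty,\|C\|_\infty,\|E\|_\infty,\lambda(Z),T$. To get it, apply It\^o's formula to $|X(s)|^2$. Its $\mathrm ds$-compensated jump contribution is $\int_Z|EX|^2\lambda(\mathrm dz)\le K|X|^2$. Then take $\mathbb E_t[\cdot\,\mathbf 1_A]$ for $A\in\mathcal F_t$ and apply Gronwall; this gives $\mathbb E_t|X(s)|^2\le e^{K(s-t)}|x|^2$. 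It suffices to bound $\mathbb E_t|X(s)|^2$ pointwise in $s$, since the sup is not needed for the running cost. Combining this with the boundedness of $G$ and $Q$ yields $|\langle M(t)x,x\rangle|\le K(1+T)|x|^2$ a.s. for each fixed $x$.

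To pass to the operator norm of $M$, use polarization on a countable dense set of $x$: taking $x$ in $\mathbb Q^n$, or simply the basis vectors $e_i$ and $e_i+e_j$, controls all entries of $M(t)$. This gives $|M(t)|\le K$ a.s. for each $t$. Because $M$ is c\`adl\`ag, the bound holds for all $t$ in a countable dense set simultaneously, and then for all $t\in[0,T]$ by right-continuity, giving $M\in S^\infty_{\mathbb F}$.

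The main obstacle is the rigorous justification of the It\^o-formula identity, namely the martingale property of the stochastic integrals. This matters because $\Sigma$ and $\Gamma$ are only square-integrable and are multiplied by $X$, so the integrands are products of two square-integrable objects. I would handle this via localization with $\tau_k=\inf\{s:|X(s)|+|M(s)|\ge k\}\wedge T$. Then let $k\to\infty$ using the uniform integrability of $\sup|X|^2\cdot\sup|M|$. That uniform integrability follows from $\mathbb E[\sup|X|^4]\le K$, obtained by the same $L^p$ estimate for linear SDEs with bounded coefficients and finite $\lambda$, together with $M\in S^2$, via Cauchy--Schwarz.
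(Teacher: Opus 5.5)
Your proof is correct, but it takes a different route from the paper. The paper never introduces a forward flow. It applies It\^o's formula directly to $e^{\beta s}|M(s)|^2$ and bounds the driver by $K\big(1+|M|+|\Sigma|+\int_Z|\Gamma|\lambda(\mathrm dz)\big)$. Young's inequality then lets the resulting $|\Sigma|^2$ and $\int_Z|\Gamma|^2\lambda(\mathrm dz)$ terms be absorbed by the quadratic-variation terms that It\^o's formula produces. Choosing $\beta>K$ and taking $\mathbb E_t$ gives $|M(t)|^2\le \mathbb E_t\big[e^{\beta T}|G|^2\big]+K\int_t^T e^{\beta s}\,\mathrm ds$. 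This is a pure BSDE energy estimate. It only needs $(M,\Sigma,\Gamma)\in S^2\times L^2\times G^2$ to remove the martingale terms, with no fourth moments and no polarization. It also works verbatim for any linear BSDE with bounded coefficients, bounded terminal value and bounded inhomogeneity.

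Your Feynman--Kac/duality argument instead identifies $\langle M(t)x,x\rangle$ with the zero-control conditional cost $\widehat J(t,x;0)$. This is the same relation $\bar Y=M\bar X$ the paper uses later to define the operator in \eqref{Mt}. You then bound it by the conditional second moment of the uncontrolled flow. Your route costs more machinery: the fourth-moment bound $\mathbb E[\sup|X|^4]\le K|x|^4$, a conditional Gronwall step, symmetry of $M$ for polarization, and the c\`adl\`ag step to obtain the bound for all $t$ simultaneously (which the paper passes over quickly). In return it gives more structure. You get an explicit constant of the form $(\|G\|_\infty+T\|Q\|_\infty)e^{K(T-t)}$, and a comparison property (e.g.\ $G\ge0,\ Q\ge0\Rightarrow M\ge0$), which the energy estimate does not provide.

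One small point. Your middle paragraph claims the martingale property follows from $X\in S^2_{\mathbb F}$, but that is not enough. The $\mathrm dW$-integrand contains $X_-^\top\Sigma X_-$ and the $\widetilde N$-integrand contains $X_-^\top\Gamma X_-$, so the BDG bound needs $\mathbb E[\sup|X|^4]<\infty$. Your final paragraph supplies exactly this through localization and Cauchy--Schwarz, so the argument is complete once you rely on that step rather than on $X\in S^2_{\mathbb F}$ alone.
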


% \begin{proof}
%   The proof uses It\^{o}'s formula applied to
%   $e^{\beta s}|M(s)|^2$ together with Gronwall's inequality.
%   % it follows the same lines as \cite[Proposition~5.1]{sun2021indefinite}
%   % with the additional jump terms handled by the finite-activity
%   % assumption $\lambda(Z)<\infty$.
%   The complete argument is given in the arXiv version
%   of this paper.
% \end{proof}

\begin{proof}
Denote the drift of \eqref{Lyapunov} as $-\Pi(s)\mathrm{d}s$. Since
$A,C,E,Q$ are bounded, there exists $K>0$ such that
\(
|\Pi (s) |\le 
K\left( 1+|M(s) |+|\Sigma(s) |+\int_Z{|\Gamma(s,z) |\lambda \left( \mathrm{d}z \right)} \right). 
\)
By the Cauchy-Schwarz inequality, we have
\[
  2\langle M(s),\Pi(s)\rangle
  \le K(1+|M(s)|^2)
     +|\Sigma(s)|^2
     +\int_Z|\Gamma(s,z)|^2\lambda(\mathrm dz).
\]
Fix $t\in[0,T]$ and let $\beta>0$. Applying It\^o's formula to
$e^{\beta s}|M(s)|^2$ on $[t,T]$ gives
\[
\begin{aligned}
	e^{\beta t}|M(t)|^2=&e^{\beta T}|G|^2+\int_t^T{e^{\beta s}\Big[ -\beta |M(s)|^2+2\langle M(s),\Pi (s)\rangle -|\Sigma (s)|^2-\int_Z{|\Gamma \left( s,z \right) |^2\lambda \left( \mathrm{d}z \right)} \Big] \mathrm{d}s}\\
	&-2\int_t^T{e^{\beta s}\langle M(s),\Sigma (s)\rangle \mathrm{d}W(s)}-\int_t^T{e^{\beta s}\int_Z{2\langle M(s-),\Gamma (s,z)\rangle +|\Gamma \left( s,z \right) |^2\widetilde{N}\left( \mathrm{d}s,\mathrm{d}z \right)}}\\
\end{aligned}
\]
Choosing $\beta>K$ and taking the conditional expectations with respect to $\mathcal{F}_{t}$, we obtain
\[
  |M(t)|^2\le e^{\beta t}|M(t)|^2
  \le
  \mathbb E\Big[
    e^{\beta T}|G|^2+K\int_t^T e^{\beta s}\,\mathrm ds
    \,\Big|\,\mathcal F_t
  \Big].
\]
Since $G\in L^\infty_{\mathcal F_T}(\Omega;\mathbb S^n)$, the
right-hand side is bounded uniformly in $t$. Hence $M$ is bounded.
\end{proof}

We conclude this section with a brief account of the T-system aggregation framework, which will be used in \secref{sec:Value Flow}. The following definitions and results are taken from El~Karoui~\cite{el1981aspects} and Zhang, Dong, and Meng~\cite[Proposition~2.3]{zhang2020backward}.

% [T-system and RCE; {\cite{el1981aspects,zhang2020backward}}]
\begin{defn}\label{def:Tsystem}
A $T$-system is a family $\{\mu(\tau)\}_{\tau\in\mathcal T[0,T]}$ such that
$\mu(\tau)$ is $\mathcal F_\tau$-measurable for every $\tau\in\mathcal T[0,T]$, and
\[
  \mu(\tau)=\mu(\sigma)\quad\text{a.s.\ on }\{\tau=\sigma\},
  \qquad \forall\,\tau,\sigma\in\mathcal T[0,T].
\]
We say that the $T$-system is of class D if the family $\{\mu(\tau):\tau\in\mathcal T[0,T]\}$ is uniformly integrable. The T-system is said to satisfy \emph{right-continuity in expectation} (RCE) if, for every sequence of stopping times $\tau_n\downarrow\tau$ a.s., one has $\mathbb{E}[\mu(\tau_n)]\to\mathbb{E}[\mu(\tau)]$.
\end{defn}

% A \emph{T-system} is a family $\{\mu(\tau)\}_{\tau\in\mathcal T[0,T]}$ of integrable random variables satisfying $\mu(\tau)=\mu(\sigma)$ a.s.\ on $\{\tau=\sigma\}$ for all $\tau,\sigma\in\mathcal T[0,T]$. The T-system is said to satisfy \emph{right-continuity in expectation} (RCE) if, for every sequence of stopping times $\tau_n\downarrow\tau$ a.s., one has $\mathbb E[\mu(\tau_n)]\to \mathbb E[\mu(\tau)]$. The following aggregation result is fundamental.

\begin{prop}\label{prop:RCE}
  Let $\{\mu(\tau)\}_{\tau\in\mathcal T[0,T]}$ be a T-system of class $D$ satisfying the RCE condition. Then there exists a unique (up to indistinguishability) c\`{a}dl\`{a}g adapted process $\bar\mu$ such that $\bar\mu(\tau)=\mu(\tau)$ a.s.\ for every $\tau\in\mathcal T[0,T]$. If, in addition, $\{\mu(\tau)\}$ is a submartingale system, then $\bar\mu$ is a c\`{a}dl\`{a}g submartingale and admits the Doob--Meyer decomposition.
\end{prop}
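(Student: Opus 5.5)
The general statement is El~Karoui's aggregation theorem~\cite{el1981aspects}. I would prove in full the case that is actually used later, namely a submartingale system of class D with the RCE property, and rely on~\cite{el1981aspects} for the general case. Here a submartingale system means $\mu(\sigma)\le\mathbb E[\mu(\tau)\mid\mathcal F_\sigma]$ for all $\sigma\le\tau$ in $\mathcal T[0,T]$. The plan is to regularize along deterministic times, then identify the regularization at arbitrary stopping times.

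\emph{Step 1 (a c\`adl\`ag version).} Define the process $X(t):=\mu(t)$ for deterministic $t\in[0,T]$. Taking $\sigma\equiv s\le\tau\equiv t$ in the submartingale system property shows that $X$ is an $\mathbb F$-submartingale. By RCE, the map $t\mapsto\mathbb E[X(t)]$ is right-continuous, since any deterministic sequence $t_n\downarrow t$ is a sequence of stopping times. The filtration $\mathbb F$ satisfies the usual conditions, so the classical regularization theorem gives a c\`adl\`ag modification $\bar\mu$ of $X$. This $\bar\mu$ is again a submartingale, and it is of class D because $\mu$ is.

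\emph{Step 2 (identification at stopping times).} First let $\tau$ take finitely many values $t_1,\dots,t_k$. The T-system consistency gives $\mu(\tau)=\mu(t_i)=\bar\mu(t_i)=\bar\mu(\tau)$ a.s.\ on $\{\tau=t_i\}$, so $\mu(\tau)=\bar\mu(\tau)$.

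For a general $\tau$, let $\tau_n\downarrow\tau$ be the dyadic upper approximations, so that $\bar\mu(\tau_n)=\mu(\tau_n)$.
\begin{itemize}
\item Right-continuity of $\bar\mu$ gives $\bar\mu(\tau_n)\to\bar\mu(\tau)$ a.s. Class D upgrades this to convergence in $L^1$.
\item Hence $\mathbb E[\mu(\tau_n)\mid\mathcal F_\tau]=\mathbb E[\bar\mu(\tau_n)\mid\mathcal F_\tau]\to\bar\mu(\tau)$ in $L^1$.
\item The submartingale system property gives $\Delta_n:=\mathbb E[\mu(\tau_n)\mid\mathcal F_\tau]-\mu(\tau)\ge0$.
\item RCE gives $\mathbb E[\Delta_n]=\mathbb E[\mu(\tau_n)]-\mathbb E[\mu(\tau)]\to0$. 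Since $\Delta_n\ge0$, this means $\Delta_n\to0$ in $L^1$.
\end{itemize}
Comparing the two limits yields $\mu(\tau)=\bar\mu(\tau)$ a.s.

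\emph{Step 3 (uniqueness and Doob--Meyer).} Two c\`adl\`ag processes that agree a.s.\ at every rational time are indistinguishable, which gives uniqueness. Finally, $\bar\mu$ is a c\`adl\`ag submartingale of class D, so the Doob--Meyer theorem applies: $\bar\mu=\bar\mu(0)+\mathcal M+\mathcal A$, with $\mathcal M$ a uniformly integrable martingale and $\mathcal A$ a predictable increasing integrable process.

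\emph{Main obstacle.} The delicate point is the general, non-submartingale T-system. Without the submartingale structure, Step~1 has no regularization theorem available. In that case I would follow~\cite{el1981aspects}: use the optional section theorem to show that the upper and lower right limits along rationals coincide up to an evanescent set, with class D and RCE excluding discrepancies at stopping times. For the purposes of this paper only the submartingale case is needed, and that case is handled completely by Steps~1--3. Within that case, the essential step is the $L^1$ identification in Step~2, which is exactly where class D and RCE are used together.
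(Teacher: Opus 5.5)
The part you defer to \cite{el1981aspects} cannot be supplied. For a T-system without sub- or supermartingale structure, the first sentence of the statement is false.

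Counterexample: fix $t_0\in(0,T)$, let $f(t)=\sin\bigl(1/(t_0-t)\bigr)$ for $t<t_0$ and $f(t)=0$ for $t_0\le t\le T$, and set $\mu(\tau):=f(\tau)$.
\begin{itemize}
\item This is a bounded, hence class~D, T-system.
\item Since $f$ is right-continuous, bounded convergence gives RCE.
\item Any aggregating process must satisfy, a.s., $\bar\mu(q)=f(q)$ for every rational $q$. But $f$ has no limit along rationals $q\uparrow t_0$, so no c\`adl\`ag aggregation exists.
\end{itemize}

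Your sketch for the general case compares upper and lower right limits via optional sections, so it only addresses right limits. RCE involves only decreasing sequences of stopping times, and class~D is a pure integrability condition, so neither controls left limits. In the submartingale case the left limits come from the upcrossing argument behind the regularization theorem in your Step~1, which is exactly what the general case lacks.

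So the aggregation claim needs the submartingale (or supermartingale) hypothesis as well. That is the setting of El~Karoui's c\`adl\`ag aggregation theorem, and it is the only form the paper uses, for the systems $J_k^x$ in Proposition~\ref{prop:Psemimartingale}.

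For that version, your Steps~1--3 are a correct, self-contained proof; the paper gives no argument and only cites \cite{el1981aspects} and \cite{zhang2020backward}. The identification in Step~2 is exactly right: uniform integrability of $\{\mu(\tau_n)\}$ lets you pass to the limit in $\mathbb E[\mu(\tau_n)\mid\mathcal F_\tau]$, and $\Delta_n\ge0$ with $\mathbb E[\Delta_n]\to0$ comes from RCE.

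Two small fixes:
\begin{itemize}
\item The claim at the end of Step~1 that $\bar\mu$ is of class~D ``because $\mu$ is'' is premature. A right-continuous submartingale on $[0,T]$ need not be of class~D, so this only follows after Step~2 gives $\bar\mu(\tau)=\mu(\tau)$ for all $\tau$. Step~2 itself needs only the uniform integrability of $\{\mu(\tau_n)\}$, which class~D of $\mu$ provides, so the logic survives once reordered.
\item The paper actually applies the result on $\mathcal T[T_k\wedge T,T]$, not $\mathcal T[0,T]$. Your argument transfers: run Step~1 for $t\mapsto\mu(\rho\vee t)$ in the filtration $(\mathcal F_{\rho\vee t})_{t\in[0,T]}$, which satisfies the usual conditions, and use $\mu(\tau)=\mu(\rho\vee t_i)$ on $\{\tau=t_i\}$ when $\tau\ge\rho$.
\end{itemize}
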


% In \secref{sec:Value Flow} we will show that the zero-control value system associated with the SLQ-J problem forms a class $D$ submartingale T-system satisfying the RCE. \proref{prop:RCE} then produces the bounded c\`{a}dl\`{a}g aggregation of the value process~$P$; see \proref{prop:Psemimartingale} for the precise statement.

\section{Open-Loop Solvability of Problem (SLQ-J) and \texorpdfstring{$\widehat{\textbf{(SLQ-J)}}$}{(SLQ-J)hat}}\label{sec:5open-loop}

\subsection{A Hilbert Space Point of View}\label{sec:hilbert}

This subsection derives a quadratic representation of the cost functional.
The representation identifies the operator that determines convexity in the
control variable and will be used later in the construction of the stochastic
value flow. For $u,v\in\mathcal U[t,T]$, set
\[
  \llbracket u,v\rrbracket
  :=\mathbb E\int_t^T\langle u(s),v(s)\rangle\,\mathrm ds.
\]

For $(t,\xi)\in\mathcal D$ and $u\in\mathcal U[t,T]$, let $(X,Y,Z,r)$ be the
adapted solution of the following decoupled linear FBSDE:
\begin{equation}\label{FBSDE1}
  \begin{cases}
    \mathrm{d}X(s) =\left[ A(s) X\left( s- \right) +B(s) u(s) \right] \mathrm{d}s+\left[ C(s) X\left( s- \right) +D(s) u(s) \right] \mathrm{d}W(s)\\
    \hspace{4em}+\int_Z{\left[ E\left( s,z \right) X\left( s- \right) +F\left( s,z \right) u(s) \right] \widetilde{N}\left( \mathrm{d}s,\mathrm{d}z \right)},\\[0.8em]
    \mathrm{d}Y(s) =-\left[ A(s) ^{\top}Y(s) +C(s) ^{\top}Z(s) +Q(s) X(s) +S(s)^\top u(s) +\int_Z{E\left( s,z \right) ^{\top}r\left( s,z \right) \lambda \left( \mathrm{d}z \right)} \right] \mathrm{d}s\\
    \hspace{4em}+Z(s) \mathrm{d}W(s) +\int_Z{r\left( s,z \right) \widetilde{N}\left( \mathrm{d}s,\mathrm{d}z \right)},\quad s\in[t,T].\\[0.8em]
    X\left( t \right) =\xi ,\quad  Y\left( T \right) =GX\left( T \right).\\
  \end{cases}
\end{equation}
% Note that the backward part of \eqref{FBSDE1} is precisely the adjoint BSDE introduced in \secref{sec:3preli}.
By \lemref{existunique}, \eqref{FBSDE1} admits a unique solution in
$
S_{\mathbb F}^2(t,T;\mathbb R^n)\times S_{\mathbb F}^2(t,T;\mathbb R^n)\times L_{\mathbb F}^2(t,T;\mathbb R^n)\times G_{\mathbb F}^2(t,T,\lambda;\mathbb R^n).
$
Furthermore, applying It\^o's formula to
$s\mapsto\langle Y(s),X(s)\rangle$ and taking conditional expectation with
respect to $\mathcal F_t$, we obtain the representation of
$\widehat J(t,\xi;u)$.
\begin{equation}\label{rep1}
\begin{aligned}
\widehat J(t,\xi;u)
=&\,\langle Y(t),\xi\rangle
+\mathbb E\bigg[\int_t^T
\big\langle B^\top Y+D^\top Z+SX+Ru
+\int_ZF^\top r\lambda(\mathrm dz),\,u\big\rangle
\mathrm ds\,\bigg|\,\mathcal F_t\bigg].
\end{aligned}
\end{equation}

By the linearity of \eqref{FBSDE1}, we write its solution as
\[
  (X,Y,Z,r)
  =
  (\widetilde X,\widetilde Y,\widetilde Z,\widetilde r)
  +(\bar X,\bar Y,\bar Z,\bar r),
\]
where $(\widetilde X,\widetilde Y,\widetilde Z,\widetilde r)$ solves
\begin{equation}\label{Xtilde}
 \begin{cases}
  \mathrm{d}\widetilde{X}(s)=(A\widetilde{X}+Bu)\mathrm{d} s+(C\widetilde{X}+Du)\mathrm{d} W(s)+\int_Z{(E\widetilde{X}+Fu)}\widetilde{N}\left( \mathrm{d}s,\mathrm{d}z \right),
  \\
  \mathrm{d}\widetilde{Y}(s)=-\left( A^{\top}\widetilde{Y}+C^{\top}\widetilde{Z}+Q\widetilde{X}+S^{\top}u+\int_Z{E^\top \widetilde{r}\lambda \left( \mathrm{d}z \right)} \right) \mathrm{d} s+\widetilde{Z}\mathrm{d} W(s)+\int_Z\widetilde{r}\widetilde{N}\left( \mathrm{d}s,\mathrm{d}z \right),
  \\
  \widetilde{X}(t)=0,\quad \widetilde{Y}(T)=G\widetilde{X}(T),
\end{cases} 
\end{equation}  
and $(\bar X,\bar Y,\bar Z,\bar r)$ solves
\begin{equation}\label{Xbar}
\begin{cases}
  \mathrm{d}\bar{X}(s)=A\bar{X}\mathrm{d}s+C\bar{X}\mathrm{d}W(s)+\int_Z{E\bar{X}}\widetilde{N}\left( \mathrm{d}s,\mathrm{d}z \right), 
  \\
  \mathrm{d}\bar{Y}(s)=-\left( A^{\top}\bar{Y}+C^{\top}\bar{Z}+Q\bar{X}+\int_Z{E^\top \bar{r}\lambda \left( \mathrm{d}z \right)} \right) \mathrm{d}s+\bar{Z}\mathrm{d}W(s)+\int_Z{\bar{r}}\widetilde{N}\left( \mathrm{d}s,\mathrm{d}z \right),
  \\
  \bar{X}(t)=\xi,\quad \bar{Y}(T)=G\bar{X}(T).
\end{cases} 
\end{equation} 

Next, we introduce the following three operators
\[
\mathcal{N}_t:\mathcal{U}[t,T]\rightarrow \mathcal{U}[t,T],\quad \mathcal{L}_t:L_{\mathcal{F}_t}^{2}(\Omega ;\mathbb{R}^n)\rightarrow \mathcal{U}[t,T],\quad M\left( t \right) :L_{\mathcal{F}_t}^{2}(\Omega ;\mathbb{R}^n)\rightarrow L_{\mathcal{F}_t}^{2}(\Omega ;\mathbb{R}^n)
\]
Specifically, for any $u\in \mathcal{U}[t, T]$,
\begin{equation}\label{Nt}
  \left[ \mathcal{N}_tu \right] (s):=B(s)^{\top}\widetilde{Y}(s)+D(s)^{\top}\widetilde{Z}(s)+S(s)\widetilde{X}(s)+\int_Z{F \left( s,z \right)^\top \widetilde{r}\left( s,z \right) \lambda (\mathrm{d}z)}+R(s)u(s),\quad s\in [t,T]
\end{equation}
for any $\xi\in L_{\mathcal{F}_t}^{2}(\Omega ;\mathbb{R}^n)$,
\begin{equation}\label{Lt}
\left[ \mathcal{L}_t\xi \right] (s):=B(s)^{\top}\bar{Y}(s)+D(s)^{\top}\bar{Z}(s)+S(s)\bar{X}(s)+\int_Z{F\left( s,z \right)^\top  \bar{r}\left( s,z \right) \lambda (\mathrm{d}z)},\quad s\in [t,T]
\end{equation}
In addition, the operator $M(t)$ is induced by the first component of the
matrix-valued Lyapunov BSDE \eqref{Lyapunov}. Indeed, applying the jump
It\^o's formula to $M(s)\bar X(s)$, where $M$ satisfies \eqref{Lyapunov}, we obtain
\[
  \bar Y(s)=M(s)\bar X(s),\qquad s\in[t,T].
\]
Therefore, since $\bar X(t)=\xi$, we define the operator $M(t)$ by
\begin{equation}\label{Mt}
  M(t)\xi:=M(t)\bar X(t)=\bar Y(t),
  \qquad \xi\in L_{\mathcal F_t}^{2}(\Omega;\mathbb R^n).
\end{equation}

For the operators $\mathcal N_t$, $\mathcal L_t$, and $M(t)$ introduced above,
we have the following result. 
% It is the jump-diffusion counterpart of
% \cite[Proposition~3.2]{sun2021indefinite}.
The self-adjointness of
$\mathcal N_t$ is proved by the duality method as in
\cite[Proposition~3.2]{sun2021indefinite}.
The boundedness of
$\mathcal N_t$, $\mathcal L_t$, and $M(t)$ follows from the estimates
for the linear FSDE and BSDE with jumps in \lemref{existunique}. 
% Hence we omit
% the details here and refer the interested reader to the arXiv version \cite{ding2026indefinite} of this paper for a complete proof.

% The subsequent theorem gives the quadratic representation of the cost functional.
% \begin{itemize}
% \item $\mathcal{N}_{t}$ is a bounded self-adjoint operator from $\mathcal{U}[t, T]$ into itself;
% \item $\mathcal{L}_{t}$ is bounded operator from $L_{\mathcal{F}_t}^{2}(\Omega ;\mathbb{R}^n)$ into $\mathcal{U}[t, T]$;
% \item $M(t)$ is an $n \times n$ symmetric matrix, i.e, $M(t) \in \mathbb{S}^{n}$
% \end{itemize}
% Finally,  the cost functional can be represented as 
% \[
% J(t, \xi ; u)=\llbracket \mathcal{N}_{t} u, u  \rrbracket +2\llbracket\mathcal{L}_{t} \xi, u  \rrbracket + \mathbb{E}\left< M(t) \xi, \xi \right> \quad \forall(t, \xi)\in\mathcal{D} 
% \]
% We now state and prove these properties precisely.

\begin{prop}\label{prop:NtLtbounded}
Under \ref{SLQH1}-\ref{SLQH2}, the following results hold:
\begin{enumerate}[label=(\roman*)]
  \item $\mathcal{N}_{t}$ defined by \eqref{Nt} is a bounded self-adjoint operator on $\mathcal{U}[t, T]$.
  \item  $\mathcal{L}_{t}$ defined by \eqref{Lt} is a bounded operator from  $L_{\mathcal{F}_t}^{2}(\Omega ;\mathbb{R}^n)$ into $\mathcal{U}[t, T]$. Moreover, there exists some positive constant $K$, independent of $(t, \xi)$, such that
  \[
  \llbracket\mathcal{L}_{t} \xi, \mathcal{L}_{t} \xi\rrbracket \le K \mathbb{E}|\xi|^{2}, \quad \forall \xi \in L_{\mathcal{F}_t}^{2}(\Omega ;\mathbb{R}^n) .
  \]
  \item $M(t)$ defined by \eqref{Mt} is a bounded operator on $L_{\mathcal F_t}^2(\Omega;\mathbb R^n)$.
\end{enumerate}
\end{prop}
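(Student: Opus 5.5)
The plan is to treat boundedness and self-adjointness separately. Boundedness will come from the a priori estimates of \lemref{existunique} and \proref{prop:Mbouned}. Self-adjointness will come from a duality computation: apply It\^o's formula to the pairing of the forward state of one control with the adjoint process of another.

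\textbf{Boundedness.} Fix $u\in\mathcal U[t,T]$. Apply \eqref{FSDEestimate} to the forward equation in \eqref{Xtilde}, with $\xi=0$, $b=Bu$, $\sigma=Du$, $l=Fu$. This gives
\[
\mathbb E\Big[\sup_{s}|\widetilde X(s)|^2\Big]\le K\,\mathbb E\int_t^T|u|^2\,\mathrm ds,
\]
using the boundedness of $B,D,F$ and $\lambda(Z)<\infty$.

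Next apply \eqref{BSDEestimate} to the backward equation in \eqref{Xtilde}, with $\eta=G\widetilde X(T)$ and $\varphi=Q\widetilde X+S^\top u$. Since $G,Q,S$ are bounded and $\big(\int_t^T|\varphi|\,\mathrm ds\big)^2\le T\int_t^T|\varphi|^2\,\mathrm ds$, the norms of $\widetilde Y,\widetilde Z,\widetilde r$ are all bounded by $K\,\mathbb E\int_t^T|u|^2\,\mathrm ds$.

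Each term of \eqref{Nt} is a bounded coefficient times one of these processes. For the jump term, Cauchy--Schwarz gives
\[
\Big|\int_Z F^\top\widetilde r\,\lambda(\mathrm dz)\Big|^2\le \|F\|_\infty^2\,\lambda(Z)\int_Z|\widetilde r|^2\,\lambda(\mathrm dz).
\]
Hence $\llbracket\mathcal N_tu,\mathcal N_tu\rrbracket\le K\llbracket u,u\rrbracket$.

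The identical argument applied to \eqref{Xbar} gives (ii), with $K$ depending only on the coefficient bounds and $T$, hence independent of $(t,\xi)$.

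For (iii), note that $\bar Y(s)=M(s)\bar X(s)$, so $M(t)\xi$ is multiplication by the random matrix $M(t)$. By \proref{prop:Mbouned}, $\mathbb E|M(t)\xi|^2\le\|M\|_\infty^2\,\mathbb E|\xi|^2$.

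\textbf{Self-adjointness.} Take $u,v\in\mathcal U[t,T]$, and let $(\widetilde X^u,\widetilde Y^u,\widetilde Z^u,\widetilde r^u)$ and $(\widetilde X^v,\dots)$ be the corresponding solutions of \eqref{Xtilde}. Apply It\^o's formula for jump processes to $\langle\widetilde Y^u(s),\widetilde X^v(s)\rangle$ on $[t,T]$.

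The key step is the cancellation of the cross terms. The $\mathrm dW$ covariation produces $\langle\widetilde Z^u,C\widetilde X^v+Dv\rangle$. The jump covariation produces $\int_Z\langle\widetilde r^u,E\widetilde X^v+Fv\rangle N(\mathrm ds,\mathrm dz)$, whose compensator is the same integrand against $\lambda(\mathrm dz)\,\mathrm ds$. These cancel exactly against $C^\top\widetilde Z^u$ and $\int_ZE^\top\widetilde r^u\lambda(\mathrm dz)$ in the drift of $\widetilde Y^u$.

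Using $\widetilde X^v(t)=0$ and taking expectations, we obtain
\[
\mathbb E\langle G\widetilde X^u(T),\widetilde X^v(T)\rangle
=\mathbb E\int_t^T\Big[\big\langle B^\top\widetilde Y^u+D^\top\widetilde Z^u+\textstyle\int_ZF^\top\widetilde r^u\lambda(\mathrm dz),\,v\big\rangle-\langle Q\widetilde X^u+S^\top u,\widetilde X^v\rangle\Big]\mathrm ds .
\]
Substituting into $\llbracket\mathcal N_tu,v\rrbracket$ then gives
\[
\llbracket\mathcal N_tu,v\rrbracket=\mathbb E\Big[\langle G\widetilde X^u(T),\widetilde X^v(T)\rangle+\int_t^T\big(\langle Q\widetilde X^u,\widetilde X^v\rangle+\langle S\widetilde X^u,v\rangle+\langle S\widetilde X^v,u\rangle+\langle Ru,v\rangle\big)\mathrm ds\Big].
\]
This expression is symmetric in $(u,v)$ because $G,Q,R$ are symmetric, so $\llbracket\mathcal N_tu,v\rrbracket=\llbracket u,\mathcal N_tv\rrbracket$.

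\textbf{Main obstacle.} The delicate point is justifying that the stochastic integrals in the It\^o expansion have zero expectation. These are the $\mathrm dW$ integral and the $\widetilde N$ integral; the latter arises after writing $N=\widetilde N+\lambda\,\mathrm dt$, and its integrand is $\langle\widetilde Y^u(s-),\cdot\rangle$ plus the product of jumps.

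These integrals are only local martingales a priori. To show they are true martingales, I would use the BDG inequality together with
\[
\mathbb E\Big[\sup_s|\widetilde Y^u|\Big(\int_t^T|C\widetilde X^v+Dv|^2\,\mathrm ds\Big)^{1/2}\Big]\le\big(\mathbb E\sup_s|\widetilde Y^u|^2\big)^{1/2}\Big(\mathbb E\int_t^T|C\widetilde X^v+Dv|^2\,\mathrm ds\Big)^{1/2}<\infty,
\]
and similarly for the terms involving $\widetilde Z^u$ and $\widetilde r^u$, with $\lambda(Z)<\infty$ controlling the jump parts. All these quantities are finite by the $S^2_{\mathbb F}$, $L^2_{\mathbb F}$, and $G^2_{\mathbb F}$ bounds already established in the boundedness step.
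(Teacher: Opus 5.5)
Your proposal is correct and follows essentially the same route as the paper: boundedness comes from the FSDE/BSDE estimates of \lemref{existunique}, and self-adjointness from the duality (It\^o product) computation pairing $\widetilde Y$ of one control with $\widetilde X$ of the other. The only differences are minor and both valid: you use a single pairing to write $\llbracket\mathcal N_tu,v\rrbracket$ as a manifestly symmetric bilinear form, where the paper applies It\^o's formula to both pairings and subtracts; and for (iii) you invoke the boundedness of $M$ from \proref{prop:Mbouned}, where the paper uses the BSDE estimate for $\bar Y(t)$.
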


% \begin{proof}
%   The self-adjointness of $\mathcal N_t$ follows from applying
%   It\^{o}'s formula to
%   $\langle\widetilde Y_2(s),\widetilde X_1(s)\rangle$
%   and using the symmetry of $G$ and $Q$;
%   the boundedness of $\mathcal L_t$ follows from the BSDE
%   estimates in \lemref{existunique}. See the arXiv version for details.
%   % Both arguments are identical to
%   % \cite[Proposition~4.2]{sun2021indefinite} with the additional
%   % jump integrals
% \end{proof}

% %%%完整证明
\begin{proof}
  (i) It follows from \lemref{existunique} and the boundedness of the coefficients that $\mathcal{N}_{t}$ is a bounded linear operator. It remains to show that $\mathcal{N}_{t}$ is self-adjoint. For any $u_1, u_2\in \mathcal{U}[t,T]$, let $(\widetilde{X}_j,\widetilde{Y}_j,\widetilde{Z}_j,\widetilde{r}_j)$, $j=1,2$, be the adapted solutions to \eqref{Xtilde} with $u$ replaced by $u_j$. Applying It\^o's formula to $s\mapsto \langle \widetilde{Y}_2(s),\widetilde{X}_1(s) \rangle$ and then taking expectations, we obtain
  \[
  \mathbb{E}\left[ \langle G\widetilde{X}_2(T),\widetilde{X}_1(T)\rangle \right] =\mathbb{E}\left[ \int_t^T{\left< B^{\top}\widetilde{Y}_2+D^{\top}\widetilde{Z}_2+\int_Z{F^{\top}\widetilde{r}_2\lambda \left( \mathrm{d}z \right) ,u_1} \right> -\left< S\widetilde{X}_1,u_2 \right> -\left< Q\widetilde{X}_2,\widetilde{X}_1 \right> \mathrm{d}s} \right]. 
  \]
  Similarly, applying It\^o's formula to $s\mapsto \langle \widetilde{Y}_1(s),\widetilde{X}_2(s) \rangle$ yields 
  \[
  \mathbb{E}\left[ \langle G\widetilde{X}_1(T),\widetilde{X}_2(T)\rangle \right] =\mathbb{E}\left[ \int_t^T{\left< B^{\top}\widetilde{Y}_1+D^{\top}\widetilde{Z}_1+\int_Z{F^{\top}\widetilde{r}_1\lambda \left( \mathrm{d}z \right) ,u_2} \right> -\left< S\widetilde{X}_2,u_1 \right> -\left< Q\widetilde{X}_1,\widetilde{X}_2 \right> \mathrm{d}s} \right] 
  \]
  Since $G$ and $Q(\cdot)$ are symmetric, it follows from the preceding two identities that
  \begin{small}
  \begin{align*}
    &~\mathbb{E}\left[ \int_t^T{\left< B^{\top}\widetilde{Y}_2+D^{\top}\widetilde{Z}_2+\int_Z{F^{\top}\widetilde{r}_2\lambda \left( \mathrm{d}z \right) +S\widetilde{X}_2,u_1} \right> \mathrm{d}s} \right]  =~\mathbb{E}\left[ \int_t^T{\left< B^{\top}\widetilde{Y}_1+D^{\top}\widetilde{Z}_1+\int_Z{F^{\top}\widetilde{r}_1\lambda \left( \mathrm{d}z \right) +S\widetilde{X}_1,u_2} \right> \mathrm{d}s} \right]. 
  \end{align*}
\end{small}
  It follows that $
  \mathbb{E} [\int_{t}^{T}\left\langle\left[\mathcal{N}_{t} u_{1}\right](s), u_{2}(s)\right\rangle \mathrm{d} s]=\mathbb{E} [\int_{t}^{T}\left\langle u_{1}(s),\left[\mathcal{N}_{t} u_{2}\right](s)\right\rangle \mathrm{d} s]
  $, that is, $\mathcal{N}_{t}$ is self-adjoint.

  (ii) Choose a constant $\alpha_1$ such that 
\begin{equation}\label{coeffbound}
  |G|^{2},|B(s)|^{2},|D(s)|^{2},|S(s)|^{2},|Q(s)|^{2},|F(s,z)|^2 \le \alpha_1 \quad \text { a.e. } s \in[0, T], \text { a.e. } z\in Z, \text { a.s. }
\end{equation}
  Therefore,
  \begin{equation}\label{Lestimate1}
  \begin{aligned}
    \llbracket\mathcal{L}_{t} \xi, \mathcal{L}_{t} \xi\rrbracket =&~\mathbb{E}\left[ \int_t^T{\left| B(s)^{\top}\bar{Y}(s)+D(s)^{\top}\bar{Z}(s)+S(s)\bar{X}(s)+\int_Z{F\left( s,z \right) ^{\top}\bar{r}\left( s,z \right) \lambda (\mathrm{d}z)} \right|^2\mathrm{d}s} \right] 
  \\
  \le&~ 4\alpha _1\mathbb{E}\left[ \int_t^T{\left| \bar{Y}(s) \right|^2+\left| \bar{Z}(s) \right|^2+\left| \bar{X}(s) \right|^2+\int_Z{\left| \bar{r}\left( s,z \right) \right|^2\lambda (\mathrm{d}z)}\mathrm{d}s} \right] 
  \end{aligned}
 \end{equation}
  By \lemref{existunique} again, there exists a constant $\alpha_2>0$, independent of $t$ and $\xi$ , such that
  \begin{align}
    \mathbb{E}\left[ \int_t^T{\left| \bar{Y}(s) \right|^2+\left| \bar{Z}(s) \right|^2+\int_Z{\left| \bar{r}\left( s,z \right) \right|^2\lambda (\mathrm{d}z)}\mathrm{d}s} \right] &\le \alpha _2\mathbb{E}\left[ |G\bar{X}(T)|^2+\int_t^T{|}Q(s)\bar{X}(s)|^2\mathrm{d}s \right],\label{Lestimate2} \\
    \mathbb{E}|\bar{X}(T)|^2+\mathbb{E}\int_t^T{|}\bar{X}(s)|^2\mathrm{d}s&\le \alpha _2\mathbb{E}|\xi |^2.\label{Lestimate3}
  \end{align}
  Substituting \eqref{Lestimate2} and \eqref{Lestimate3} into \eqref{Lestimate1} and making use of \eqref{coeffbound}, we have
  \[
  \mathbb{E}\left[ \int_t^T{\left| \bar{Y}(s) \right|^2+\left| \bar{Z}(s) \right|^2+\int_Z{\left| \bar{r}\left( s,z \right) \right|^2\lambda (\mathrm{d}z)}\mathrm{d}s} \right] \le \alpha _1\alpha _{2}^{2}\mathbb{E}|\xi |^2
  \]
  which implies that $\llbracket\mathcal{L}_{t} \xi, \mathcal{L}_{t} \xi\rrbracket \le 4\alpha _1(\alpha _1\alpha _{2}^{2}+\alpha _2)\mathbb{E}|\xi |^2$ for all $\xi\in L_{\mathcal{F}_t}^{2}(\Omega ;\mathbb{R}^n)$.

  (iii) By \eqref{Mt}, the estimates \eqref{FSDEestimate} and \eqref{BSDEestimate}, and the boundedness of $G$ and $Q$, we have
  \[
    \mathbb E[|M(t)\xi|^2]
    =\mathbb E[|\bar Y(t)|^2]
    \le C\mathbb E\bigg[|G\bar X(T)|^2
      +\bigg(\int_t^T|Q(s)\bar X(s)|\,\mathrm ds\bigg)^2\bigg]
    \le C\mathbb E|\xi|^2.
  \]
  Hence, $M(t)$ is a bounded operator on $L_{\mathcal F_t}^2(\Omega;\mathbb R^n)$.
\end{proof}

We are now ready to present the functional representation of the cost
functional.

\begin{thm}\label{thm:Jquadratic}
  Under \ref{SLQH1}-\ref{SLQH2}, for any initial pair $(t, \xi)\in\mathcal{D}$  and control $u \in$ $\mathcal{U}[t, T]$, we have the following representation of the cost functional, 
  \begin{equation}\label{costrep}
    J(t, \xi ; u)=\llbracket \mathcal{N}_{t} u, u  \rrbracket +2\llbracket\mathcal{L}_{t} \xi, u  \rrbracket + \mathbb E\langle M(t)\xi,\xi\rangle, \quad \forall(t, \xi)\in\mathcal{D}, 
    \end{equation}
  where $\mathcal N_t$, $\mathcal L_t$, and $M(t)$ are defined by
  \eqref{Nt}, \eqref{Lt}, and \eqref{Mt}, respectively.
\end{thm}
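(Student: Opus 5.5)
The plan is to start from the representation \eqref{rep1}, take full expectations, and split everything using the linear decomposition $(X,Y,Z,r)=(\widetilde X,\widetilde Y,\widetilde Z,\widetilde r)+(\bar X,\bar Y,\bar Z,\bar r)$ of \eqref{FBSDE1} into \eqref{Xtilde} and \eqref{Xbar}. Taking expectation in \eqref{rep1} gives
\[
J(t,\xi;u)=\mathbb E\langle Y(t),\xi\rangle+\llbracket B^\top Y+D^\top Z+SX+Ru+\textstyle\int_ZF^\top r\lambda(\mathrm dz),\,u\rrbracket .
\]
By linearity, the integrand in the second term equals $[\mathcal N_tu](s)+[\mathcal L_t\xi](s)$, by the definitions \eqref{Nt} and \eqref{Lt}. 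For the first term, $Y(t)=\widetilde Y(t)+\bar Y(t)$, and $\bar Y(t)=M(t)\xi$ by \eqref{Mt}. So
\[
J(t,\xi;u)=\llbracket\mathcal N_tu,u\rrbracket+\llbracket\mathcal L_t\xi,u\rrbracket+\mathbb E\langle\widetilde Y(t),\xi\rangle+\mathbb E\langle M(t)\xi,\xi\rangle,
\]
and the whole claim reduces to the duality identity $\mathbb E\langle\widetilde Y(t),\xi\rangle=\llbracket\mathcal L_t\xi,u\rrbracket$.

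To prove this identity I will apply the jump Itô formula to $s\mapsto\langle\widetilde Y(s),\bar X(s)\rangle$ on $[t,T]$, using $\widetilde Y(T)=G\widetilde X(T)$ and $\bar X(t)=\xi$. The drift contributions $A^\top\widetilde Y$ against $A\bar X$ cancel. The quadratic covariation terms $\langle\widetilde Z,C\bar X\rangle$ and $\int_Z\langle\widetilde r,E\bar X\rangle\lambda(\mathrm dz)$ cancel against the $C^\top\widetilde Z$ and $E^\top\widetilde r$ drift terms; here the compensator of $\int\langle\widetilde r,E\bar X_-\rangle N(\mathrm ds,\mathrm dz)$ supplies the $\lambda(\mathrm dz)\mathrm ds$ term. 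What remains is
\[
\mathbb E\langle G\widetilde X(T),\bar X(T)\rangle-\mathbb E\langle\widetilde Y(t),\xi\rangle=-\mathbb E\int_t^T\big[\langle Q\widetilde X,\bar X\rangle+\langle S^\top u,\bar X\rangle\big]\mathrm ds .
\]
Symmetrically, I will apply Itô's formula to $\langle\bar Y(s),\widetilde X(s)\rangle$, using $\widetilde X(t)=0$. This gives
\[
\mathbb E\langle G\bar X(T),\widetilde X(T)\rangle=\mathbb E\int_t^T\big[\langle B^\top\bar Y+D^\top\bar Z+\textstyle\int_ZF^\top\bar r\lambda(\mathrm dz),u\rangle-\langle Q\bar X,\widetilde X\rangle\big]\mathrm ds .
\]
Subtracting the two identities and using the symmetry of $G$ and $Q$ yields
\[
\mathbb E\langle\widetilde Y(t),\xi\rangle=\mathbb E\int_t^T\langle B^\top\bar Y+D^\top\bar Z+S\bar X+\textstyle\int_ZF^\top\bar r\lambda(\mathrm dz),u\rangle\mathrm ds=\llbracket\mathcal L_t\xi,u\rrbracket,
\]
which is exactly what is needed. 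This is the same computation as in the self-adjointness proof of \proref{prop:NtLtbounded}(i).

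The main technical point is justifying that the stochastic integrals arising in Itô's formula are true martingales, so that their expectations vanish. These are the $\mathrm dW$ integrals and the $\widetilde N$ integrals, the latter with integrands such as $\langle\widetilde r(s,z),\bar X(s-)\rangle$, $\langle\widetilde Y(s-),E\bar X(s-)\rangle$ and the product $\langle\widetilde r,E\bar X_-\rangle$. By \lemref{existunique} the processes $\widetilde X,\bar X,\widetilde Y,\bar Y$ lie in $S^2_{\mathbb F}$, while $\widetilde Z,\bar Z$ lie in $L^2_{\mathbb F}$ and $\widetilde r,\bar r$ in $G^2_{\mathbb F}$. The Burkholder--Davis--Gundy inequality then gives $\mathbb E[(\int|\widetilde Z|^2|\bar X|^2\mathrm ds)^{1/2}]\le(\mathbb E\sup|\bar X|^2)^{1/2}(\mathbb E\int|\widetilde Z|^2)^{1/2}<\infty$. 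The jump integrals are handled the same way, using $\lambda(Z)<\infty$ and the boundedness of $E$ and $F$. So these local martingales are uniformly integrable martingales, and taking expectations is legitimate. Alternatively, one can localize with stopping times and pass to the limit by dominated convergence.

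Finally, combining the reduction with the duality identity, the two cross terms add to give $2\llbracket\mathcal L_t\xi,u\rrbracket$, which is \eqref{costrep}.
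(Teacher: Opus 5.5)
Your proposal is correct and follows the paper's proof essentially line by line: you reduce \eqref{costrep}, via \eqref{rep1} and the splitting into \eqref{Xtilde}--\eqref{Xbar}, to the identity $\mathbb E\langle\widetilde Y(t),\xi\rangle=\llbracket\mathcal L_t\xi,u\rrbracket$, and then obtain it from It\^o's formula applied to $\langle\widetilde Y,\bar X\rangle$ and $\langle\bar Y,\widetilde X\rangle$, together with the symmetry of $G$ and $Q$. You also check that the stochastic integrals are true martingales (Cauchy--Schwarz plus Burkholder--Davis--Gundy, using $\lambda(Z)<\infty$), which is a step the paper leaves implicit.
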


\begin{proof}
  
By the representation \eqref{rep1}, together with the operator $\mathcal{N}_{t}$, $\mathcal{L}_{t}$ and $M(t)$, we have
\begin{equation}\label{identity}
\begin{aligned}
	J(t,\xi;u)=&\mathbb{E}\Big[ \langle \widetilde{Y}(t),\xi\rangle +\langle \bar{Y}(t),\xi\rangle +\int_t^T{\langle \left[ \mathcal{N}_tu \right] (s)+\left[ \mathcal{L}_t\xi \right] (s),u(s)}\rangle \mathrm{d}s \Big]\\
	=&\mathbb{E}[ \langle \widetilde{Y}(t),\xi\rangle ]+  \mathbb{E}\langle M(t) \xi, \xi  \rangle + \llbracket \mathcal{N}_{t} u, u  \rrbracket +\llbracket\mathcal{L}_{t} \xi, u  \rrbracket.
\end{aligned}
\end{equation}
It remains to identify the first term $\mathbb{E}\big[ \langle \widetilde{Y}(t),\xi\rangle \big] $. Applying It\^{o}'s formula to $s \mapsto\langle\widetilde{Y}(s), \bar{X}(s)\rangle$ gives
\begin{equation}\label{tilbar}
  \mathbb{E}\big[ \langle \widetilde{Y}(t),\xi\rangle \big] =\mathbb{E}\Big[ \left< G\widetilde{X}(T),\bar{X}(T) \right> +\int_t^T{\big( \langle S\bar{X},u\rangle +\langle Q\widetilde{X},\bar{X}\rangle \big) \mathrm{d}s} \Big], 
\end{equation}
and applying It\^{o}'s formula to $s \mapsto\langle\bar{Y}(s), \widetilde{X}(s)\rangle$ gives
\begin{equation}\label{bartil}
  0=\mathbb{E}\left[ \langle G\bar{X}(T),\widetilde{X}(T)\rangle \right] -\mathbb{E}\Big[\int_t^T{\Big(\langle B^{\top}\bar{Y}+D^{\top}\bar{Z}+\int_Z{F^\top \bar{r}\lambda (\mathrm{d}z)},u  \rangle -\langle Q\widetilde{X},\bar{X}\rangle \Big)}\mathrm{d}s\Big]
\end{equation}
Since $G$ is symmetric, it follows from \eqref{tilbar} and \eqref{bartil} that
\[
\mathbb{E}\big[ \langle \widetilde{Y}(t),\xi\rangle \big] =\mathbb{E}\Big[\int_t^T{\langle B^{\top}\bar{Y}+D^{\top}\bar{Z}+S\bar{X}+\int_Z{F^\top \bar{r}\lambda (\mathrm{d}z)},u \rangle }\mathrm{d}s\Big]=\mathbb{E}\Big[\int_t^T{\langle \left[ \mathcal{L}_t\xi \right] (s),u(s)}\rangle \mathrm{d}s\Big]
\]
Substituting this into \eqref{identity} proves \eqref{costrep}.
\end{proof}

The representation \eqref{costrep} shows that the convexity of the cost
functional with respect to the control variable is determined by
the operator $\mathcal N_t$. In particular, taking $\xi=0$ in
\eqref{costrep}, we have
\(
  J(t,0;u)=\llbracket \mathcal N_tu,u\rrbracket, u\in\mathcal U[t,T].
\)
Therefore, the uniform convexity of $u\mapsto J(t,0;u)$ is equivalently
written as
\begin{equation}\label{H3t}
  J(t,0;u)
  =
  \llbracket \mathcal N_tu,u\rrbracket
  \ge
  \varepsilon\,\mathbb E\bigg[\int_t^T |u(s)|^2\,\mathrm ds\bigg],
  \quad \forall u\in\mathcal U[t,T],\qquad \text {for some } \varepsilon>0.
\end{equation}

For later comparison, we recall two standard sufficient conditions for
\eqref{H3t}. The first condition is: there exists some $\varepsilon_R>0$, such that
\begin{equation}\label{convex1}
  G \ge 0, \quad Q(\cdot) \ge 0, \quad S(\cdot)=0, \quad R(\cdot) \ge \varepsilon_R I_{m},
\end{equation}
The second condition is: there exists some $\varepsilon_G>0$ and  $\varepsilon_0>0$, such that
\begin{equation}\label{convex2}
    Q(\cdot)\ge0,\quad R(\cdot)\ge0,\quad S(\cdot)=0,\quad
    G\ge\varepsilon_G I_n, \quad D(s)^\top D(s)+\int_Z F(s,z)^\top F(s,z)\lambda(\mathrm dz)
  \ge \varepsilon_0 I_m,
\end{equation}
Either \eqref{convex1} or \eqref{convex2} implies \eqref{H3t};
see \proref{prop:basic_unifconv} below.
The sufficient conditions in
\secref{sec:sufficient} show that \ref{N_0uniformalconvex2-I} is strictly
more general than both \eqref{convex1} and \eqref{convex2}.

\subsection{Open-Loop Solvability}\label{subsec:5open-loop}

In this subsection, we characterize open-loop optimal controls by the quadratic
representation obtained in \secref{sec:hilbert}. The key point is that the
convexity of the cost functional is necessary for optimality, while its
uniform convexity is sufficient for unique open-loop solvability.

 \begin{thm}\label{Opensolve}
  Let \ref{SLQH1}-\ref{SLQH2} hold, and let the initial pair $(t, \xi) \in\mathcal{D}$ be given. A control $u^*\in\mathcal U[t,T]$ is open-loop optimal for Problem
 (SLQ-J) at $(t,\xi)$ if and only if
  \begin{enumerate}[label=(\roman*)]
    \item $u\mapsto J(t,0;u)$ is convex, equivalently,
    \[
      J(t,0;v)\ge0,\qquad \forall v\in\mathcal U[t,T];
    \]
    \item the adapted solution $(X^*,Y^*,Z^*,r^*)$ of \eqref{FBSDE1}
    corresponding to $u^*$ satisfies
    \[
      B^\top Y^*+D^\top Z^*+SX^*
      +\int_Z F^\top r^*\,\lambda(\mathrm dz)
      +Ru^*=0,
      \qquad \text{a.e. on }[t,T],\ \text{a.s.}
    \]
  \end{enumerate}
If, in addition, \eqref{H3t} holds, then Problem (SLQ-J) is uniquely
open-loop solvable at $(t,\xi)$, and its unique optimal control is
\(
  u^*=-\mathcal N_t^{-1}\mathcal L_t\xi.
\)
 \end{thm}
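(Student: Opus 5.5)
The plan is to work entirely in the Hilbert space $\mathcal U[t,T]$ with the quadratic representation \eqref{costrep} of \thmref{thm:Jquadratic}. The only substantive ingredient beyond elementary Hilbert-space arguments is to identify the gradient $\mathcal N_tu+\mathcal L_t\xi$ with the process appearing in condition (ii).

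\emph{Expansion of the cost.} Fix $(t,\xi)\in\mathcal D$, $u^*\in\mathcal U[t,T]$, an arbitrary $v\in\mathcal U[t,T]$ and $\lambda\in\mathbb R$. Using \eqref{costrep} together with the self-adjointness of $\mathcal N_t$ from \proref{prop:NtLtbounded}(i), I will expand
\[
J(t,\xi;u^*+\lambda v)=J(t,\xi;u^*)+2\lambda\llbracket \mathcal N_tu^*+\mathcal L_t\xi,v\rrbracket+\lambda^2\llbracket\mathcal N_tv,v\rrbracket .
\]
Since $J(t,0;v)=\llbracket\mathcal N_tv,v\rrbracket$, optimality of $u^*$ is equivalent to $2\lambda\llbracket\mathcal N_tu^*+\mathcal L_t\xi,v\rrbracket+\lambda^2J(t,0;v)\ge0$ for all $\lambda$ and $v$.

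\emph{Equivalence with (i)--(ii).} Dividing by $\lambda^2$ and letting $|\lambda|\to\infty$ gives $J(t,0;v)\ge0$. Dividing by $\lambda$ and letting $\lambda\to0^\pm$ gives $\llbracket\mathcal N_tu^*+\mathcal L_t\xi,v\rrbracket=0$ for every $v$, hence $\mathcal N_tu^*+\mathcal L_t\xi=0$ in $\mathcal U[t,T]$. Conversely, these two facts make the expansion at $\lambda=1$ yield $J(t,\xi;u^*+v)\ge J(t,\xi;u^*)$ for every $v$. The equivalence between nonnegativity of $J(t,0;\cdot)$ and convexity of $u\mapsto J(t,0;u)$ is immediate, since this functional is the quadratic form $\llbracket\mathcal N_t\cdot,\cdot\rrbracket$.

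\emph{Identification of the gradient.} This is the step needing care. By linearity of \eqref{FBSDE1} and uniqueness in \lemref{existunique}, the solution corresponding to $u^*$ splits as $(X^*,Y^*,Z^*,r^*)=(\widetilde X,\widetilde Y,\widetilde Z,\widetilde r)+(\bar X,\bar Y,\bar Z,\bar r)$, where the first quadruple solves \eqref{Xtilde} with $u=u^*$ and the second solves \eqref{Xbar}. Adding \eqref{Nt} and \eqref{Lt} then gives
\[
\mathcal N_tu^*+\mathcal L_t\xi=B^\top Y^*+D^\top Z^*+SX^*+\int_ZF^\top r^*\lambda(\mathrm dz)+Ru^* ,
\]
as elements of $\mathcal U[t,T]$. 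The one point to check is predictability: the right-hand side involves $X^*,Y^*$ rather than their left limits, but these differ only at countably many jump times, a $\mathrm ds\otimes\mathbb P$-null set. The process therefore defines the same element of $L^2_{\mathbb F}$ as its predictable version, and vanishing a.e.\ is exactly condition (ii).

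\emph{Uniqueness under \eqref{H3t}.} Condition \eqref{H3t} says $\llbracket\mathcal N_tu,u\rrbracket\ge\varepsilon\llbracket u,u\rrbracket$. So $\mathcal N_t$ is a bounded, self-adjoint, coercive operator, and by the Lax--Milgram theorem it is invertible with bounded inverse. Condition (i) holds trivially, and (ii) reads $\mathcal N_tu^*=-\mathcal L_t\xi$, whose unique solution is $u^*=-\mathcal N_t^{-1}\mathcal L_t\xi$. By the characterization above, this $u^*$ is optimal and it is the only optimal control. I expect no real obstacle; only the identification step requires some bookkeeping.
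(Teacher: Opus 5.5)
Your proposal is correct and follows essentially the same route as the paper. You expand $J(t,\xi;u^*+\lambda v)$ via \eqref{costrep} as a quadratic polynomial in $\lambda$, read off $J(t,0;v)\ge0$ and $\mathcal N_tu^*+\mathcal L_t\xi=0$, identify the latter with the stationarity condition through the splitting \eqref{Xtilde}--\eqref{Xbar}, and invert the coercive self-adjoint operator $\mathcal N_t$. Your extra remarks on predictability (the processes and their left limits differ only on a $\mathrm ds\otimes\mathbb P$-null set) and on the quadratic-form characterization of convexity make explicit details that the paper leaves implicit.
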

 
 \begin{proof}
By the quadratic representation \eqref{costrep}, for any
$u,v\in\mathcal U[t,T]$ and $\lambda\in\mathbb R$,
\[
\begin{aligned}
  J(t,\xi;u+\lambda v)-J(t,\xi;u)
  &=
  \lambda^2\llbracket \mathcal N_tv,v\rrbracket
  +2\lambda\llbracket \mathcal N_tu+\mathcal L_t\xi,v\rrbracket  
  =
  \lambda^2 J(t,0;v)
  +2\lambda\llbracket \mathcal N_tu+\mathcal L_t\xi,v\rrbracket .
\end{aligned}
\]
Therefore $u^*$ is optimal if and only if the above quadratic polynomial in
$\lambda$ is nonnegative for every $v\in\mathcal U[t,T]$ and every
$\lambda\in\mathbb R$. This is equivalent to
\[
  J(t,0;v)\ge0,\qquad \forall v\in\mathcal U[t,T],
  \qquad
  \mathcal N_tu^*+\mathcal L_t\xi=0.
\]
By the definitions of $\mathcal N_t$ and $\mathcal L_t$, the second identity
is precisely the stationarity condition in (ii). This proves the
characterization of optimal controls.

Furthermore, if \eqref{H3t} holds, then
$\mathcal N_t\ge\varepsilon I$ on the Hilbert space $\mathcal U[t,T]$.
Since $\mathcal N_t$ is bounded and self-adjoint by
\proref{prop:NtLtbounded}, it is boundedly invertible. Hence 
$\mathcal N_tu^*+\mathcal L_t\xi=0$ admits the unique solution
$u^*=-\mathcal N_t^{-1}\mathcal L_t\xi$, which is therefore the unique
open-loop optimal control.
\end{proof}
%  Suppose $u$ is open-loop optimal for $(t, \xi)$. Then it satisfies 
%  \[
%    J(t, \xi  ; u+\lambda v)-J(t, \xi ; u) \ge 0, \quad \forall \lambda \in \mathbb{R}, \forall v \in \mathcal{U}[t, T] 
%  \]
%  which is equivalent to 
%  \[
%  \lambda ^2J(t,0;v)+2\lambda \llbracket \mathcal{N}_tu+\mathcal{L}_t \xi,v \rrbracket =\lambda ^2J(t,0;v)+\lambda \llbracket \mathcal{D}_uJ(t,\xi;u),v \rrbracket \ge 0,\quad \forall \lambda \in \mathbb{R}, \forall v \in \mathcal{U}[t, T] .
%  \]
%  Consequently, $u$ is open-loop optimal if and only if
%  \[
%  J(t,0;v)\ge 0 \quad \forall v\in \mathcal{U}[t, T]; \quad \mathcal{D}_uJ(t,\xi;u)=0.
%  \]

%  \emph{Proof of (ii).}\;
%  By the quadratic representation \eqref{costrep} and \proref{unifoper}, the uniform convexity is equivalent to the coercivity $\llbracket \mathcal{N}_t u,u\rrbracket\ge\delta\|u\|_{\mathcal{U}[t,T]}^2$ for all $u\in\mathcal{U}[t,T]$. Since $\mathcal{N}_t$ is a bounded self-adjoint operator (\proref{prop:NtLtbounded}(i)), it is therefore boundedly invertible on $\mathcal{U}[t,T]$. The quadratic functional $u\mapsto J(t,\xi;u)$ thus admits the unique minimizer $u^*=-\mathcal{N}_t^{-1}\mathcal{L}_t\xi$.

Consequently, if $u^*$ is an open-loop optimal control for Problem
(SLQ-J) at $(t,\xi)$, then the corresponding state-adjoint quadruple
$(X^*,Y^*,Z^*,r^*)$ satisfies the optimality system
\begin{equation}\label{optimal}
\begin{cases}
  \mathrm dX^*(s)
  =(AX^*+Bu^*)\mathrm ds+(CX^*+Du^*)\mathrm dW(s)
  +\int_Z(EX^*+Fu^*)\widetilde N(\mathrm ds,\mathrm dz),\\[0.4em]
  \mathrm dY^*(s)
  =-\left(A^\top Y^*+C^\top Z^*+QX^*+S^\top u^*
  +\int_ZE^\top r^*\,\lambda(\mathrm dz)\right)\mathrm ds
  +Z^*\mathrm dW(s)
  +\int_Z r^*\,\widetilde N(\mathrm ds,\mathrm dz),
  \\[0.4em]
  B^\top Y^*+D^\top Z^*+SX^*
  +\int_ZF^\top r^*\,\lambda(\mathrm dz)+Ru^*=0,
  ~\text{a.e. on }[t,T],\ \text{a.s.}~X^*(t)=\xi,~Y^*(T)=GX^*(T).
\end{cases}
\end{equation}
 We call \eqref{optimal} the optimality system of Problem (SLQ-J).  The \emph{stationarity condition} couples the forward and backward equations
in \eqref{optimal}.

%  Thus, to find an open-loop optimal control of Problem (SLQ-J), one actually needs to solve a \emph{coupled FBSDE}.

 The next theorem establishes the equivalence between Problems (SLQ-J) and ($\widehat{\text{SLQ-J}}$).

\begin{thm}\label{thm:SLQhat_equiv}
Let \ref{SLQH1}--\ref{SLQH2} hold. For any $(t,\xi)\in\mathcal D$, a
control $u^*\in\mathcal U[t,T]$ is optimal for Problem (SLQ-J) if and only
if it is optimal for Problem $\widehat{\mathrm{(SLQ\text{-}J)}}$.
\end{thm}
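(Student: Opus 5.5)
The plan is to prove the two implications separately. For the direction $\widehat{\text{(SLQ-J)}}\Rightarrow$(SLQ-J), suppose $\widehat J(t,\xi;u^*)\le \widehat J(t,\xi;u)$ a.s.\ for every $u\in\mathcal U[t,T]$. Taking expectations gives $J(t,\xi;u^*)\le J(t,\xi;u)$, because $J=\mathbb E[\widehat J]$ by the tower property; this uses the integrability of $L(t,\xi;u)$ guaranteed under \ref{SLQH1}--\ref{SLQH2}.

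The converse is the substantive direction. Let $u^*$ be optimal for (SLQ-J) and fix $u\in\mathcal U[t,T]$. For an arbitrary set $\Gamma\in\mathcal F_t$, I will consider the spliced control $u^\Gamma:=u\mathbf 1_\Gamma+u^*\mathbf 1_{\Gamma^c}$. Since $\mathbf 1_\Gamma$ is $\mathcal F_t$-measurable and therefore constant in $s$ on $[t,T]$, the process $u^\Gamma$ is predictable and square integrable, so $u^\Gamma\in\mathcal U[t,T]$. The key observation is that the state equation \eqref{state} is linear with initial value $\xi$ independent of $\Gamma$. Multiplying by the $\mathcal F_t$-measurable indicator commutes with the $\mathrm ds$, $\mathrm dW$ and $\widetilde N$ integrals over $[t,T]$, so by uniqueness in \lemref{existunique} we get
\[
X^{u^\Gamma}=X^{u}\mathbf 1_\Gamma+X^{u^*}\mathbf 1_{\Gamma^c}.
\]
It follows pathwise that $L(t,\xi;u^\Gamma)=\mathbf 1_\Gamma L(t,\xi;u)+\mathbf 1_{\Gamma^c}L(t,\xi;u^*)$. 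Taking conditional expectations then gives $\widehat J(t,\xi;u^\Gamma)=\mathbf 1_\Gamma\widehat J(t,\xi;u)+\mathbf 1_{\Gamma^c}\widehat J(t,\xi;u^*)$.

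From optimality, $J(t,\xi;u^*)\le J(t,\xi;u^\Gamma)$. Subtracting the decomposition above yields
\[
\mathbb E\bigl[\mathbf 1_\Gamma(\widehat J(t,\xi;u)-\widehat J(t,\xi;u^*))\bigr]\ge0
\]
for every $\Gamma\in\mathcal F_t$. The difference inside is $\mathcal F_t$-measurable and integrable, so choosing $\Gamma=\{\widehat J(t,\xi;u)<\widehat J(t,\xi;u^*)\}$ shows that this set is null. Hence $\widehat J(t,\xi;u^*)\le\widehat J(t,\xi;u)$ a.s., which is optimality for $\widehat{\text{(SLQ-J)}}$.

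I expect the only delicate step to be justifying the splicing identity for the state when jumps are present. The $\widetilde N$-integral of $\mathbf 1_\Gamma\phi$ equals $\mathbf 1_\Gamma$ times the integral of $\phi$ for $\mathcal F_t$-measurable $\Gamma$ and integration on $[t,T]$. This holds first for simple predictable integrands and then extends by $L^2$-isometry. With this, both $X^u\mathbf 1_\Gamma+X^{u^*}\mathbf 1_{\Gamma^c}$ and $X^{u^\Gamma}$ solve the same linear SDE, and uniqueness identifies them.
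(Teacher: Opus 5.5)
Your proof is correct and complete. The paper does not write out an argument for this theorem; it only defers to \cite{sun2021indefinite,wen2022stochastic}. Within the paper's own framework, the natural route goes through Theorem~\ref{Opensolve}, in three steps. First, an optimal $u^*$ for (SLQ-J) satisfies $J(t,0;v)\ge0$ for all $v$ together with the stationarity condition in \eqref{optimal}. Second, the localization identity $J(t,0;\mathbf 1_\Gamma v)=\mathbb E[\mathbf 1_\Gamma\widehat J(t,0;v)]$, $\Gamma\in\mathcal F_t$, upgrades this to $\widehat J(t,0;v)\ge0$ a.s. Third, the conditional version of the expansion in the proof of Theorem~\ref{Opensolve}, obtained by the same It\^o duality that gives \eqref{rep1}, reads $\widehat J(t,\xi;u^*+v)=\widehat J(t,\xi;u^*)+\widehat J(t,0;v)+2\,\mathbb E_t\big[\int_t^T\langle B^\top Y^*+D^\top Z^*+SX^*+\int_ZF^\top r^*\lambda(\mathrm dz)+Ru^*,v\rangle\,\mathrm ds\big]$, and its last term vanishes by stationarity.

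Your splicing argument with $u^\Gamma=u\mathbf 1_\Gamma+u^*\mathbf 1_{\Gamma^c}$ bypasses the optimality system altogether. It uses only two facts: $\mathcal U[t,T]$ is stable under $\mathcal F_t$-measurable pasting, and the state equation has unique solutions. This is the same localization device as in Lemma~\ref{lem:H3propagation}. As a result it is shorter, needs no FBSDE or duality computation, and would apply verbatim to any control problem with an integrable random cost, quadratic or not. The stationarity route, in exchange, yields as a by-product the conditional characterization of optimality for Problem $\widehat{\mathrm{(SLQ\text{-}J)}}$: conditional convexity plus stationarity. It thereby ties the equivalence directly to the optimality system that is used afterwards in Corollary~\ref{cor:value_adjoint}.
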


\begin{proof}
The proof is similar to those of Wen et al.~\cite[Theorem~4.2]{wen2022stochastic}
and Sun, Xiong, and Yong~\cite[Theorem~4.2]{sun2021indefinite}. We
therefore omit the details and refer the interested reader to these works.
\end{proof}

To conclude this section, we present the following useful consequence.

\begin{coro}\label{cor:value_adjoint}
Let \ref{SLQH1}--\ref{SLQH2} hold. Suppose that $(X^*,u^*)$ is an optimal
pair at $(t,\xi)$, and let $(Y^*,Z^*,r^*)$ be the adjoint BSDE solution
associated with $(X^*,u^*)$. Then
\[
  \widehat V(t,\xi)
  =
  \widehat J(t,\xi;u^*)
  =
  \langle Y^*(t),\xi\rangle .
\]
\end{coro}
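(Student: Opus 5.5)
The plan is to combine \thmref{thm:SLQhat_equiv} with the representation \eqref{rep1} applied to the optimal quadruple. The first equality $\widehat V(t,\xi)=\widehat J(t,\xi;u^*)$ is a restatement of conditional optimality. By \thmref{thm:SLQhat_equiv}, $u^*$ is also optimal for Problem $\widehat{\mathrm{(SLQ\text{-}J)}}$, so $\widehat J(t,\xi;u^*)\le \widehat J(t,\xi;u)$ a.s.\ for every $u\in\mathcal U[t,T]$. Hence $\widehat J(t,\xi;u^*)$ is an $\mathcal F_t$-measurable lower bound of the family $\{\widehat J(t,\xi;u)\}_{u}$, and it is attained by a member of that family. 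By the definition of essential infimum, it therefore equals $\widehat V(t,\xi)$ a.s.

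For the second equality I will use the conditional representation \eqref{rep1} with $u=u^*$ and $(X,Y,Z,r)=(X^*,Y^*,Z^*,r^*)$:
\[
\widehat J(t,\xi;u^*)=\langle Y^*(t),\xi\rangle+\mathbb E\bigg[\int_t^T\Big\langle B^\top Y^*+D^\top Z^*+SX^*+Ru^*+\int_ZF^\top r^*\lambda(\mathrm dz),\,u^*\Big\rangle\mathrm ds\,\bigg|\,\mathcal F_t\bigg].
\]
By \thmref{Opensolve}(ii), the integrand's first factor vanishes a.e.\ on $[t,T]$, a.s. The conditional expectation is then zero, which gives $\widehat J(t,\xi;u^*)=\langle Y^*(t),\xi\rangle$.

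The only delicate point is justifying \eqref{rep1} itself, which the paper states only briefly. To do this, I will apply the jump Itô formula to $\langle Y^*(s),X^*(s)\rangle$ on $[t,T]$ and use $Y^*(T)=GX^*(T)$. The drift terms involving $A$, $C$, $E$ cancel against the adjoint drift, leaving $-\langle QX^*+S^\top u^*,X^*\rangle$ plus the control terms. The cross-variation of the jump parts, $\int_Z\langle r^*,EX^*+Fu^*\rangle N(\mathrm ds,\mathrm dz)$, is split into its compensator and a $\widetilde N$-martingale. I will then add the running cost.

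The stochastic integrals are local martingales. Their conditional expectations given $\mathcal F_t$ vanish because $X^*,Y^*\in S^2_{\mathbb F}$ and $Z^*\in L^2_{\mathbb F}$, $r^*\in G^2_{\mathbb F}$. By the Burkholder--Davis--Gundy inequality this makes them true martingales, for example via $\mathbb E[\sup|Y^*|(\int|Z^*|^2)^{1/2}]<\infty$. I expect this integrability check to be the main, if routine, obstacle. Everything else is direct substitution.
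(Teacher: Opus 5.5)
Your proposal is correct and follows the paper's proof. The paper likewise substitutes the stationarity condition from \thmref{Opensolve} into \eqref{rep1} to get $\widehat J(t,\xi;u^*)=\langle Y^*(t),\xi\rangle$, and it invokes \thmref{thm:SLQhat_equiv} to identify this with $\widehat V(t,\xi)$. Your It\^o-formula derivation of \eqref{rep1}, including the martingale integrability check, only fills in a step the paper states without detail.
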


\begin{proof}
By \thmref{Opensolve}, the stationarity condition in \eqref{optimal}
holds for the optimal pair. Substituting this condition into
\eqref{rep1} gives
\(
  \widehat J(t,\xi;u^*)=\langle Y^*(t),\xi\rangle .
\)
The equality with $\widehat V(t,\xi)$ follows from
\thmref{thm:SLQhat_equiv}.
\end{proof}

% By virtue of the optimality of $\left(X^{*}, u^{*}\right)$ and \thmref{Opensolve}, we obtain the following stationarity condition
% \[
% B(s)^{\top}Y^*(s)+D(s)^{\top}Z^*(s)+S(s)X^*(s)+\int_Z{F^{\top}\left( s,z \right)}r^*\left( s,z \right) \lambda (\mathrm{d}z)+R(s)u^*(s)=0,\quad \mathrm{a}.\mathrm{e}.s\in [t,T],\mathrm{a}.\mathrm{s}.
% \]
% Therefore, the result follows immediately from \eqref{rep1}. 

% The following corollary states the time-consistency (principle of optimality) for Problem (SLQ-J).

% \begin{coro}[Principle of Optimality]\label{coro:time-consistency}
%   Let \ref{SLQH1}-\ref{SLQH2} hold. Suppose that $u^{*}(\cdot) \in \mathcal{U}[t, T]$ is an optimal control with respect to the initial pair $(t, \xi) \in \mathcal{D}$, and let $X^{*}=\left\{X^{*}(s) ; s \in[t, T]\right\}$ be the corresponding optimal state process. Then for any stopping time $\tau$ with $\tau \in[t, T]$, the restriction
%   \[
%   \left.u^{*}(\cdot)\right|_{[\tau, T]} \triangleq\left\{u^{*}(s) ; s \in[\tau, T]\right\}
%   \]
%   of $u^{*}(\cdot)$ over the time horizon $[\tau, T]$ remains optimal with respect to the initial pair $\left(\tau, X^{*}(\tau)\right)$.
% \end{coro}

 \section{Stochastic Value Flow}\label{sec:Value Flow}

This section constructs the stochastic value flow. Under the uniform
convexity condition, we first prove that the conditional value is quadratic
at every stopping time:
\(
  \widehat V(\tau,\xi)=\langle P(\tau)\xi,\xi\rangle .
\)
We then show that the family $\{P(\tau)\}_{\tau\in\mathcal T[0,T]}$
admits a bounded c\`adl\`ag special-semimartingale aggregation. This
aggregated process will be identified in \secref{sec:Riccati} with the
solution of the SRE-J.

% \begingroup\color{blue}
% \subsection[Why the global Y X-1 method fails]{Why the global
% $\mathbf Y\mathbf X^{-1}$ method fails}\label{sec:inverseflow_fail}

% \exaref{exam:counterexample2} shows the obstruction that is specific to jump
% systems: the closed-loop jump map is governed by $I_n+E+F\Theta$, not by the
% open-loop multiplier $I_n+E$. Hence $I_n+E$ may be nonsingular while the
% optimal state fundamental matrix loses invertibility at the first jump time.
% This prevents a global recovery of the Riccati process through
% $P=\mathbf Y\mathbf X^{-1}$.

% The construction below avoids this difficulty. On each inter-jump interval the
% Poisson random measure has no actual jump and the state equation reduces to a
% Brownian SDE with the compensator in the drift; the corresponding fundamental
% matrix is locally invertible there. We use this only locally, and then glue the
% resulting semimartingales by the $\mathcal T$-system aggregation procedure.
% Thus the argument needs inter-jump inverse flows but never a global inverse flow
% across jump times.
% \endgroup

\subsection{Quadratic representation of the stochastic value flow}
\label{sec:dettime}

Throughout Sections~\ref{sec:Value Flow}--\ref{sec:Riccati}, we assume
\ref{N_0uniformalconvex2-I}, i.e., there exists $\varepsilon>0$ such that
\begin{equation}\label{N_0uniformalconvex}
  J(0,0;u)=\llbracket \mathcal N_0u,u\rrbracket
  \ge \varepsilon\,\mathbb E\int_0^T |u(s)|^2\,\mathrm ds,
  \qquad u\in\mathcal U[0,T].
\end{equation}
We shall use the control problem with a random initial time.  For notational
simplicity, write
\[
  \ell(s,x,u):=\langle Q(s)x,x\rangle
  +2\langle S(s)x,u\rangle+\langle R(s)u,u\rangle .
\]
For a stopping time $\sigma\in\mathcal T[0,T]$, let
$\mathcal U[\sigma,T]$ be the set of $\mathbb F$-predictable controls
satisfying
\(
  \mathbb E\int_\sigma^T|u(s)|^2\,\mathrm ds<\infty .
\)
For $\xi\in L^2_{\mathcal F_\sigma}(\Omega;\mathbb R^n)$ and
$u\in\mathcal U[\sigma,T]$, denote by $X^{\sigma,\xi,u}$ the corresponding
state process and set
\begin{equation}\label{eq:random_initial_cost}
  \widehat J(\sigma,\xi;u)
  :=\mathbb E_\sigma\left[
    \langle GX^{\sigma,\xi,u}(T),X^{\sigma,\xi,u}(T)\rangle
    +\int_\sigma^T
      \ell(s,X^{\sigma,\xi,u}(s),u(s))\,\mathrm ds
  \right].
\end{equation}
and define
\[
  \widehat V(\sigma,\xi)
  :=
  \operatorname*{ess\,inf}_{u\in\mathcal U[\sigma,T]}
  \widehat J(\sigma,\xi;u),
  \qquad
  J(\sigma,\xi;u):=\mathbb E\,\widehat J(\sigma,\xi;u).
\]

We first propagate the uniform convexity \eqref{N_0uniformalconvex} from the
time $0$ to an arbitrary stopping time $\tau$. 
% this coercivity is the
% basis for the quadratic representation at $\tau$ established in
% \thmref{thm:Prandomtime}.
% [Propagation of uniform convexity]
\begin{lem}\label{lem:H3propagation}
  Let \ref{SLQH1}--\ref{SLQH2} and \eqref{N_0uniformalconvex} hold.  Then, for every
$\tau\in\mathcal T[0,T]$,
\[
  \widehat J(\tau,0;v)
  \ge
  \varepsilon\,\mathbb E_\tau\int_\tau^T |v(s)|^2\,\mathrm ds,
  \qquad
  v\in\mathcal U[\tau,T],
  \quad \hbox{a.s.}
\]
\end{lem}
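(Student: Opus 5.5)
The plan is to argue by contradiction, extending a control on $[\tau,T]$ by zero on $[0,\tau)$ and localizing it to the event where the conditional inequality fails. Fix $\tau\in\mathcal T[0,T]$ and $v\in\mathcal U[\tau,T]$, and set
\[
  A:=\Big\{\widehat J(\tau,0;v)<\varepsilon\,\mathbb E_\tau\int_\tau^T|v(s)|^2\,\mathrm ds\Big\}\in\mathcal F_\tau .
\]
Suppose $\mathbb P(A)>0$. We will build a control on $[0,T]$ that violates \eqref{N_0uniformalconvex}.

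Define $u(s):=\mathbf 1_A\mathbf 1_{\{s>\tau\}}v(s)$ for $s\in[0,T]$. This process is predictable, because $\mathbf 1_A\mathbf 1_{(\tau,T]}(s)$ is predictable when $A\in\mathcal F_\tau$. It also lies in $\mathcal U[0,T]$, and it coincides with $\mathbf 1_Av$ a.e. on $[\tau,T]$.

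\emph{Step 1 (state).} Let $X^{0,0,u}$ be the state on $[0,T]$. On $[0,\tau]$ the state equation is linear and homogeneous with zero initial value and zero control. By uniqueness in \lemref{existunique}, $X^{0,0,u}\equiv0$ on $[0,\tau]$, including at $\tau$ itself, since there is no inhomogeneous jump term.

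On $[\tau,T]$, the process $\mathbf 1_AX^{\tau,0,v}$ solves the state equation with control $\mathbf 1_Av$ and initial value $0$, because $\mathbf 1_A$ is $\mathcal F_\tau$-measurable and can be pulled inside the stochastic integrals on $(\tau,T]$. Uniqueness then gives $X^{0,0,u}=\mathbf 1_AX^{\tau,0,v}$ on $[\tau,T]$.

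\emph{Step 2 (cost).} Consequently the pathwise cost $L(0,0;u)$ equals $\mathbf 1_A$ times the cost of $(X^{\tau,0,v},v)$ on $[\tau,T]$, using $\mathbf 1_A^2=\mathbf 1_A$. Taking $\mathbb E_\tau$ and then $\mathbb E$ yields
\[
  J(0,0;u)=\mathbb E\big[\mathbf 1_A\widehat J(\tau,0;v)\big].
\]
Similarly,
\[
  \varepsilon\,\mathbb E\int_0^T|u|^2\,\mathrm ds=\mathbb E\Big[\mathbf 1_A\,\varepsilon\,\mathbb E_\tau\int_\tau^T|v|^2\,\mathrm ds\Big].
\]
Integrability is guaranteed by the estimates in \lemref{existunique} and the boundedness assumptions \ref{SLQH1}--\ref{SLQH2}.

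\emph{Step 3 (contradiction).} By \eqref{N_0uniformalconvex}, $J(0,0;u)\ge\varepsilon\,\mathbb E\int_0^T|u|^2\,\mathrm ds$. Combined with Step 2, this gives
\[
  \mathbb E\Big[\mathbf 1_A\Big(\widehat J(\tau,0;v)-\varepsilon\,\mathbb E_\tau\int_\tau^T|v|^2\,\mathrm ds\Big)\Big]\ge0 .
\]
But the integrand is strictly negative on $A$, so this forces $\mathbb P(A)=0$, which is the claim.

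The main obstacle is Step 1 when $\tau$ is a random time rather than deterministic. One must check that the state started from $0$ at the stopping time $\tau$ is well defined. It is, because the paper's setup of \eqref{eq:random_initial_cost} treats this as the equation on $[0,T]$ with coefficients multiplied by $\mathbf 1_{(\tau,T]}$. One must also check that $\mathcal F_\tau$-measurable indicators commute with the Brownian and compensated Poisson integrals over $(\tau,T]$. If needed, I would verify both by writing the equation with the predictable cutoff $\mathbf 1_{(\tau,T]}(s)$ and invoking uniqueness from \lemref{existunique}.
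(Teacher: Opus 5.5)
Your proof is correct and follows essentially the same route as the paper: extend $v$ by $\tilde v=\mathbf 1_A\mathbf 1_{(\tau,T]}v$, identify $J(0,0;\tilde v)=\mathbb E[\mathbf 1_A\widehat J(\tau,0;v)]$, and apply \eqref{N_0uniformalconvex}. The only difference is cosmetic: the paper proves the inequality for every $A\in\mathcal F_\tau$ and then concludes, whereas you take $A$ to be the set where the inequality fails. Your treatment of the state identification at a random initial time is slightly more detailed than the paper's.
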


\begin{proof}
Let $A\in\mathcal F_\tau$ and $v\in\mathcal U[\tau,T]$ be fixed. Define
\[
  \tilde v(s):=\mathbf 1_A\mathbf 1_{(\tau,T]}(s)v(s),
  \qquad s\in[0,T].
\]
Then $\tilde v\in\mathcal U[0,T]$. 
Let $X^{\tilde v}$ be the state starting from $0$ under $\tilde v$. Then
$X^{\tilde v}=0$ on $[0,\tau]$. On $A$, its restriction to $[\tau,T]$
coincides with the state starting from $0$ at time $\tau$ under $v$. On
$A^c$, it remains zero. Hence
\(
  J(0,0;\tilde v)
  =
  \mathbb E\bigl[
    \mathbf 1_A\,\widehat J(\tau,0;v)
  \bigr].
\)
By \eqref{N_0uniformalconvex},
\[
  \mathbb E\bigl[
    \mathbf 1_A\,\widehat J(\tau,0;v)
  \bigr]
  =
  J(0,0;\tilde v)
  \ge
  \varepsilon\,\mathbb E\int_0^T|\tilde v(s)|^2\,\mathrm ds
  =
  \varepsilon\,\mathbb E\biggl[
    \mathbf 1_A\int_\tau^T |v(s)|^2\,\mathrm ds
  \biggr].
\]
Since $A\in\mathcal F_\tau$ is arbitrary, the conditional inequality follows.
% Taking expectations gives the second assertion.
\end{proof}

Now, we show that $\widehat V(\tau,\xi)$ is quadratic in $\xi$ at every
$\tau\in\mathcal T[0,T]$.
% [Quadratic representation at stopping times]
\begin{thm}\label{thm:Prandomtime}
Let \ref{SLQH1}--\ref{SLQH2} and \eqref{N_0uniformalconvex} hold. Then, for every
$\tau\in\mathcal T[0,T]$, there exists a unique
$P(\tau)\in L^\infty_{\mathcal F_\tau}(\Omega;\mathbb S^n)$ such that
\[
  \widehat V(\tau,\xi)=\langle P(\tau)\xi,\xi\rangle,
  \qquad
  \xi\in L^2_{\mathcal F_\tau}(\Omega;\mathbb R^n).
\]
Moreover, there exists a constant $C>0$, independent of $\tau$, such that
$|P(\tau)|\le C$ a.s.
\end{thm}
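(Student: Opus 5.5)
Fix $\tau\in\mathcal T[0,T]$. The plan is to repeat the Hilbert-space analysis of \secref{sec:hilbert} with $t$ replaced by the random initial time $\tau$, but in conditional form. Define the conditional analogues $\widehat{\mathcal N}_\tau$, $\widehat{\mathcal L}_\tau$ through the FBSDEs \eqref{Xtilde}--\eqref{Xbar} posed on $[\tau,T]$; this is possible because \lemref{existunique} extends verbatim to stopping-time initial conditions. Then $\bar Y(\tau)=M(\tau)\xi$ with $M$ the bounded Lyapunov solution of \proref{prop:Mbouned}. Applying It\^o's formula as in \eqref{rep1} and \thmref{thm:Jquadratic}, but taking $\mathbb E_\tau$ instead of $\mathbb E$, gives
\[
\widehat J(\tau,\xi;u)=\mathbb E_\tau\!\int_\tau^T\!\langle \mathcal N_\tau u,u\rangle\mathrm ds+2\,\mathbb E_\tau\!\int_\tau^T\!\langle \mathcal L_\tau\xi,u\rangle\mathrm ds+\langle M(\tau)\xi,\xi\rangle .
\]
By \lemref{lem:H3propagation}, $\mathcal N_\tau\ge\varepsilon I$ on $\mathcal U[\tau,T]$. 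Hence \thmref{Opensolve} (applied with stopping-time initial time) and \thmref{thm:SLQhat_equiv} give a unique optimal control $u^*=-\mathcal N_\tau^{-1}\mathcal L_\tau\xi$, which is linear in $\xi$. By \corref{cor:value_adjoint}, $\widehat V(\tau,\xi)=\langle Y^*(\tau),\xi\rangle$.

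The next step is to produce a matrix. Take $\xi=e_i$ (deterministic) and let $(X_i^*,Y_i^*)$ be the corresponding optimal state and adjoint. Set $\mathbf Y(\tau)=[Y_1^*(\tau),\dots,Y_n^*(\tau)]$. For $\xi\in L^\infty_{\mathcal F_\tau}$, the key observation is that the optimal control at $(\tau,\xi)$ equals $\sum_i\xi_i u_i^*$. This holds because the state equation is linear and $\mathcal F_\tau$-measurable multipliers commute with everything on $[\tau,T]$, and the stationarity condition in \eqref{optimal} is preserved under such combinations, so this candidate is optimal by the characterization in \thmref{Opensolve}. Hence $Y^*(\tau)=\mathbf Y(\tau)\xi$ and $\widehat V(\tau,\xi)=\langle \mathbf Y(\tau)\xi,\xi\rangle$. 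Define $P(\tau):=\tfrac12(\mathbf Y(\tau)+\mathbf Y(\tau)^\top)$. The general $\xi\in L^2_{\mathcal F_\tau}$ then follows by truncation $\xi\mathbf 1_{\{|\xi|\le k\}}$ together with the $L^1$-continuity of $\xi\mapsto\widehat V(\tau,\xi)$, which the quadratic bound below provides. For uniqueness, if two symmetric $\mathcal F_\tau$-measurable matrices give the same quadratic form for all $\xi$, polarization with $\xi=\mathbf 1_A x$ for $A\in\mathcal F_\tau$ and $x\in\mathbb R^n$ forces equality.

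The main obstacle is the uniform bound, and I would obtain it from two-sided pathwise estimates rather than from the $L^2$ operator bounds, which only control expectations. For the upper bound, use the zero control: $\widehat V(\tau,\xi)\le\widehat J(\tau,\xi;0)=\langle M(\tau)\xi,\xi\rangle\le C|\xi|^2$ by \proref{prop:Mbouned}. For the lower bound, complete the square in the conditional representation using $\mathcal N_\tau\ge\varepsilon$:
\[
\widehat V(\tau,\xi)\ge\langle M(\tau)\xi,\xi\rangle-\varepsilon^{-1}\mathbb E_\tau\!\int_\tau^T|\mathcal L_\tau\xi|^2\mathrm ds .
\]
It then remains to show $\mathbb E_\tau\int_\tau^T|\mathcal L_\tau\xi|^2\le K|\xi|^2$ a.s., i.e. the conditional version of \proref{prop:NtLtbounded}(ii). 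Since $\bar Y=M\bar X$ with $M$ bounded, $\mathcal L_\tau\xi$ is linear in $\xi$ with coefficients involving $\bar X/|\xi|$, $\bar Z$ and $\bar r$. Conditional BSDE estimates on $[\tau,T]$ (It\^o plus Gronwall, taking $\mathbb E_\tau$), together with the fact that the flow started from $x$ is independent of $|\xi|$ scaling for $\mathcal F_\tau$-measurable $\xi$, yield this bound with $K$ depending only on the coefficient bounds and $T$. Combining the two estimates gives $|\langle P(\tau)x,x\rangle|\le C|x|^2$, hence $|P(\tau)|\le C$ with $C$ independent of $\tau$. The delicate point I expect is justifying the conditional (rather than expected) estimates for $\bar Z,\bar r$; I would handle this by localizing with $\mathbf 1_A$, $A\in\mathcal F_\tau$, exactly as in the proof of \lemref{lem:H3propagation}.
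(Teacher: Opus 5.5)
Your proof is correct and follows the paper's overall structure: propagated uniform convexity at $\tau$, unique solvability by the Hilbert-space method, linearity over bounded $\mathcal F_\tau$-measurable $\xi$ to produce a matrix, a two-sided bound, and truncation. The differences are in how you get the uniform bound, plus a small point in the truncation step.

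\emph{The uniform bound.} The paper works only with expected costs. The zero control gives $\mathbb E\widehat V(\tau,\xi)\le C\mathbb E|\xi|^2$. The splitting $X^{\tau,\xi,u}=X^{\tau,\xi,0}+X^{\tau,0,u}$ together with Young's inequality gives $\mathbb E\widehat V(\tau,\xi)\ge -C\mathbb E|\xi|^2$. The paper then substitutes $\xi=\mathbf 1_A x$ with $A\in\mathcal F_\tau$ a single time, which turns both expectation bounds into $|\langle P(\tau)x,x\rangle|\le C|x|^2$ a.s.

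So your remark that the $L^2$ bounds ``only control expectations'' undersells them. Once the quadratic representation is known for bounded $\xi$, this localization upgrades them to pathwise bounds at no cost. No conditional quadratic representation and no conditional version of \proref{prop:NtLtbounded}(ii) is needed. In fact, you end up proving that conditional estimate by the very same $\mathbf 1_A$-localization.

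What your route buys is a more explicit sandwich:
\[
\langle M(\tau)\xi,\xi\rangle-\varepsilon^{-1}\mathbb E_\tau\int_\tau^T|\mathcal L_\tau\xi|^2\,\mathrm ds\le \widehat V(\tau,\xi)\le\langle M(\tau)\xi,\xi\rangle .
\]
In particular it gives $P(\tau)\le M(\tau)$, with $M$ the bounded Lyapunov solution of \proref{prop:Mbouned}; the paper's proof does not record this. Defining $P(\tau)$ through the symmetrized adjoint matrix $\mathbf Y(\tau)$, rather than through the conditional cost matrix, is an equivalent choice.

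\emph{The truncation step.} The bound $|P(\tau)|\le C$ does not by itself give $L^1$-continuity of $\xi\mapsto\widehat V(\tau,\xi)$ on $L^2_{\mathcal F_\tau}$. You need one more ingredient, and either of two works:
\begin{enumerate}[label=(\alph*)]
\item Use continuity in $\xi$ of $u^*=-\mathcal N_\tau^{-1}\mathcal L_\tau\xi$ and of the state, as the paper does.
\item More simply, use locality. Set $A_k=\{|\xi|\le k\}\in\mathcal F_\tau$. The optimal control at $(\tau,\xi\mathbf 1_{A_k})$ is $\mathbf 1_{A_k}u^*$, so $\widehat V(\tau,\xi)=\widehat V(\tau,\xi\mathbf 1_{A_k})=\langle P(\tau)\xi,\xi\rangle$ on $A_k$. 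Since $\bigcup_k A_k$ has full measure, the representation holds a.s.
\end{enumerate}
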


\begin{proof}
% {\color{red}
% The proof strategy follows \cite[Theorem~5.1]{sun2021indefinite}
% and \cite[Theorem~3.4]{zhang2020backward}; the key difference is that the
% lower bound for the value (Step~2) relies on the propagated uniform convexity
% (\lemref{lem:H3propagation}) rather than the standard condition.}

Fix $\tau\in\mathcal T[0,T]$. By \lemref{lem:H3propagation}, the cost
functional is uniformly convex on $[\tau,T]$. 
Applying the Hilbert space method in
\thmref{Opensolve} to the stopped problem gives unique open-loop solvability for every
$\xi\in L^2_{\mathcal F_\tau}(\Omega;\mathbb R^n)$.

\emph{Step 1: quadratic representation for bounded $\xi$.} Let $(X_i,U_i)$ be the optimal pair corresponding to $(\tau,e_i)$, $i=1,\ldots,n$, and set
$\mathbf X=(X_1,\ldots,X_n)$, $\mathbf U=(U_1,\ldots,U_n)$.
By the linearity of the state equation and the uniqueness of the optimal
control, $(\mathbf X\xi,\mathbf U\xi)$ is the optimal pair corresponding to
$(\tau,\xi)$ for every
$\xi\in L^\infty_{\mathcal F_\tau}(\Omega;\mathbb R^n)$. Therefore,
\begin{align*}
  \widehat V(\tau,\xi)
  &=
  \mathbb E_\tau\Big[
    \langle G\mathbf X(T)\xi,\mathbf X(T)\xi\rangle  
    +\int_\tau^T
    \Big(
      \langle Q\mathbf X\xi,\mathbf X\xi\rangle
      +2\langle S\mathbf X\xi,\mathbf U\xi\rangle
      +\langle R\mathbf U\xi,\mathbf U\xi\rangle
    \Big)\,\mathrm ds
  \Big] =:\langle P(\tau)\xi,\xi\rangle,
\end{align*}
where
\[
  P(\tau):=
  \mathbb E_\tau\Big[
    \mathbf X(T)^\top G\mathbf X(T)
    +\int_\tau^T
    \Big(
      \mathbf X^\top Q\mathbf X
      +\mathbf X^\top S^\top\mathbf U
      +\mathbf U^\top S\mathbf X
      +\mathbf U^\top R\mathbf U
    \Big)\,\mathrm ds
  \Big],
\]
is $\mathcal F_\tau$-measurable and symmetric.

\emph{Step 2: uniform boundedness of $P(\tau)$.}
% It remains to prove the uniform boundedness of $P(\tau)$.  
Since the stopped problem is uniquely solvable,
$\mathbb E[\widehat V(\tau,\xi)]=\inf_{u\in\mathcal U[\tau,T]}J(\tau,\xi;u)$.
Taking $u=0$ and using \lemref{existunique}, we obtain
\[
  \mathbb E[\widehat V(\tau,\xi)]\le J(\tau,\xi;0)\le C\mathbb E[|\xi|^2],
\]
where $C>0$ is independent of $\tau$ and $\xi$. On the other hand, by the
decomposition $X^{\tau,\xi,u}=X^{\tau,\xi,0}+X^{\tau,0,u}$, the boundedness of
the weighting coefficients, and \lemref{existunique},
\[
  J(\tau,\xi;u)
  \ge J(\tau,0;u)-C\,\mathbb E[|\xi|^2]
  -C\big(\mathbb E[|\xi|^2]\big)^{1/2}
     \Big(\mathbb E\bigg[\int_\tau^T|u(s)|^2\,\mathrm ds\bigg]\Big)^{1/2}.
\]
Taking expectations in \lemref{lem:H3propagation} and applying Young's
inequality, we get
\[
  J(\tau,\xi;u)
  \ge \frac{\varepsilon}{2}\,\mathbb E\bigg[\int_\tau^T|u(s)|^2\,\mathrm ds\bigg]
  -C\,\mathbb E[|\xi|^2]
  \ge -C\,\mathbb E[|\xi|^2] .
\]
Combining the two bounds gives
\[
  -C\,\mathbb E[|\xi|^2]
  \le \mathbb E\,\widehat V(\tau,\xi)
  \le C\,\mathbb E[|\xi|^2],
  \qquad \xi\in L^\infty_{\mathcal F_\tau}(\Omega;\mathbb R^n).
\]
For $x\in\mathbb R^n$ and $A\in\mathcal F_\tau$, taking $\xi=\mathbf 1_Ax$
implies
\(
  -C\,\mathbb P(A)|x|^2
  \le \mathbb E\big[\mathbf 1_A\langle P(\tau)x,x\rangle\big]
  \le C\,\mathbb P(A)|x|^2 .
\)
Since $A$ is arbitrary,
$|\langle P(\tau)x,x\rangle|\le C|x|^2$ a.s. for every $x\in\mathbb R^n$.
Hence, $|P(\tau)|\le C$ a.s.

\emph{Step 3: extension from $L^\infty_{\mathcal F_\tau}$ to $L^2_{\mathcal F_\tau}$.}
For $\xi\in L^2_{\mathcal F_\tau}(\Omega;\mathbb R^n)$, set
$\xi^N=\xi\mathbf 1_{\{|\xi|\le N\}}$, so that $\xi^N\to\xi$ in $L^2$. By the
continuous dependence of the state on its initial value (\lemref{existunique})
and the boundedness of the weighting coefficients, $\widehat V(\tau,\xi^N)\to
\widehat V(\tau,\xi)$ in $L^1$. Since $P(\tau)$ is bounded,
$\langle P(\tau)\xi^N,\xi^N\rangle\to\langle P(\tau)\xi,\xi\rangle$ in $L^1$,
and passing to the limit proves the representation on $L^2_{\mathcal F_\tau}$.
% Uniqueness follows from polarization.
\end{proof}

By the uniqueness in \thmref{thm:Prandomtime}, the family
$\{P(\tau)\}_{\tau\in\mathcal T[0,T]}$ is a bounded $\mathcal T$-system.
Indeed, if $\sigma,\tau\in\mathcal T[0,T]$ and $A=\{\sigma=\tau\}$, then
$\widehat V(\sigma,\mathbf 1_Ax)=\widehat V(\tau,\mathbf 1_Ax)$ on $A$,
so $\mathbf 1_A P(\sigma)=\mathbf 1_A P(\tau)$ a.s.

For stopping times $\sigma\le\tau$, we write $\mathcal U[\sigma,\tau]$
for the set of $\mathbb F$-predictable processes $u$ on
$(\sigma,\tau]$ with
$\mathbb E\int_\sigma^\tau|u(s)|^2\mathrm ds<\infty$.
The following Bellman principle follows from the standard dynamic
programming method for random initial times; see, for example,
Tang~\cite[Theorem~4.6]{tang2015dynamic} and Zhang, Dong, and
Meng~\cite[Proposition~3.3]{zhang2020backward}.

\begin{prop}\label{prop:DPPgeneral}
Let \ref{SLQH1}--\ref{SLQH2} and \eqref{N_0uniformalconvex} hold, and let
$\sigma,\tau\in\mathcal T[0,T]$ with $\sigma\le\tau$.
Then, for every
$\xi\in L^2_{\mathcal F_\sigma}(\Omega;\mathbb R^n)$,
\begin{equation}\label{eq:DPP_general}
  \widehat V(\sigma,\xi)
  =
  \operatorname*{ess\,inf}_{u\in\mathcal U[\sigma,\tau]}
  \mathbb E_\sigma\!\left[
    \int_\sigma^\tau \ell(s,X(s),u(s))\,\mathrm ds
    +\langle P(\tau)X(\tau),X(\tau)\rangle
  \right],
\end{equation}
where $X$ is the state process on $[\sigma,\tau]$ driven by $u$ with
$X(\sigma)=\xi$.
\end{prop}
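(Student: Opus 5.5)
We prove the two inequalities separately, writing $\mathcal W(\sigma,\xi)$ for the right-hand side of \eqref{eq:DPP_general}. The tools are: the quadratic representation $\widehat V(\tau,\eta)=\langle P(\tau)\eta,\eta\rangle$ of \thmref{thm:Prandomtime}; the unique open-loop solvability at every stopping time, from \lemref{lem:H3propagation} and \thmref{Opensolve}; and the flow (pasting) property of the linear state equation. The latter says that if $X$ is the state on $[\sigma,T]$ from $(\sigma,\xi)$ under $u$, then on $[\tau,T]$ it coincides with the state from $(\tau,X(\tau))$ under $u|_{(\tau,T]}$. This holds because the SDE with jumps has pathwise uniqueness and $X(\tau)\in L^2_{\mathcal F_\tau}$.

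\emph{Step 1: $\widehat V(\sigma,\xi)\ge\mathcal W(\sigma,\xi)$.} Take any $u\in\mathcal U[\sigma,T]$. By the tower property and the flow property,
\[
\widehat J(\sigma,\xi;u)=\mathbb E_\sigma\Big[\int_\sigma^\tau\ell(s,X,u)\,\mathrm ds+\widehat J(\tau,X(\tau);u|_{(\tau,T]})\Big].
\]
Next use $\widehat J(\tau,X(\tau);u|_{(\tau,T]})\ge\widehat V(\tau,X(\tau))=\langle P(\tau)X(\tau),X(\tau)\rangle$. This gives $\widehat J(\sigma,\xi;u)\ge$ the bracket in \eqref{eq:DPP_general} evaluated at $u|_{(\sigma,\tau]}$, which is $\ge\mathcal W(\sigma,\xi)$. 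Taking the essential infimum over $u$ gives the inequality.

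\emph{Step 2: $\widehat V(\sigma,\xi)\le\mathcal W(\sigma,\xi)$.} Fix $u\in\mathcal U[\sigma,\tau]$ with state $X$ on $[\sigma,\tau]$. Let $v^*\in\mathcal U[\tau,T]$ be the unique optimal control at $(\tau,X(\tau))$. It exists by \thmref{Opensolve} applied to the stopped problem, and by \thmref{thm:SLQhat_equiv} it is also conditionally optimal, i.e. $\widehat J(\tau,X(\tau);v^*)=\langle P(\tau)X(\tau),X(\tau)\rangle$ a.s. Concatenate: set $\bar u=u$ on $(\sigma,\tau]$ and $\bar u=v^*$ on $(\tau,T]$. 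This $\bar u$ is predictable, since it is a sum of predictable processes multiplied by the predictable indicators $\mathbf 1_{(\sigma,\tau]}$ and $\mathbf 1_{(\tau,T]}$, and it is square integrable, so $\bar u\in\mathcal U[\sigma,T]$. The flow property and the tower property then give
\[
\widehat V(\sigma,\xi)\le\widehat J(\sigma,\xi;\bar u)=\mathbb E_\sigma\Big[\int_\sigma^\tau\ell(s,X,u)\,\mathrm ds+\langle P(\tau)X(\tau),X(\tau)\rangle\Big].
\]
Taking the essential infimum over $u\in\mathcal U[\sigma,\tau]$ completes the proof.

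The main obstacle is the measure-theoretic care at random times, and two points need checking. First, the conditional optimality of $v^*$ must hold almost surely (not only in expectation) at a stopping time $\tau$ with a merely $L^2_{\mathcal F_\tau}$ initial state. I would obtain this by repeating the proof of \thmref{thm:SLQhat_equiv} with $t$ replaced by $\tau$. Concretely, if $\widehat J(\tau,\eta;w)<\widehat J(\tau,\eta;v^*)$ on a set $A\in\mathcal F_\tau$ of positive probability, then the control $\mathbf 1_Aw+\mathbf 1_{A^c}v^*$ strictly lowers the expected cost, contradicting optimality. Second, the integrability of all the conditional expectations must be verified. This follows from \lemref{existunique} and the uniform bound $|P(\tau)|\le C$ of \thmref{thm:Prandomtime}, which makes $\langle P(\tau)X(\tau),X(\tau)\rangle$ integrable. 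No essential-infimum approximation argument is needed, because the optimal control at $(\tau,X(\tau))$ exists exactly.
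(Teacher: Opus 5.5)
Your proof is correct. It is essentially the standard two-sided dynamic-programming argument that the paper defers to, since the paper gives no proof of its own and instead cites Tang and Zhang--Dong--Meng; your one simplification is to avoid a minimizing-sequence argument for the essential infimum by using the exact optimal continuation at $(\tau,X(\tau))$. That continuation is supplied by Lemma~\ref{lem:H3propagation} together with the almost-sure conditional optimality you verify at stopping times, and the same concatenation device appears in the paper's proof of Proposition~\ref{prop:stoppedunifconv}.
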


\subsection{Aggregation into a c\`{a}dl\`{a}g semimartingale}
\label{sec:aggregation}

We now verify the RCE property required by the $\mathcal T$-system
aggregation (\proref{prop:RCE}) and then construct the c\`adl\`ag
semimartingale~$P$.

\begin{prop}\label{prop:stopcont}
Let \ref{SLQH1}--\ref{SLQH2} and \eqref{N_0uniformalconvex} hold.
\begin{enumerate}[label=\upshape(\roman*)]
  \item For every $x\in\mathbb R^n$, the $\mathcal T$-system
  $\{\widehat V(\tau,x)\}_{\tau\in\mathcal T[0,T]}$ is of class $D$ and
  satisfies the RCE condition.

  \item Let $\rho\in\mathcal T[0,T]$, and let $X^{\rho,x,0}$ be the state on
  $[\rho,T]$ starting from $x\in\mathbb R^n$ under the zero control. Then
  \[
    Y^{\rho,x}(\tau)
    :=
    \widehat V\bigl(\tau,X^{\rho,x,0}(\tau)\bigr)
    +\int_\rho^\tau
      \ell\bigl(s,X^{\rho,x,0}(s),0\bigr)\,\mathrm ds,
    \qquad \tau\in\mathcal T[\rho,T],
  \]
  is a class $D$ submartingale $\mathcal T[\rho,T]$-system satisfying the RCE condition.
\end{enumerate}
\end{prop}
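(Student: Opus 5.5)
Part (i) rests on \thmref{thm:Prandomtime}. There we have $\widehat V(\tau,x)=\langle P(\tau)x,x\rangle$ with $|P(\tau)|\le C$ and $C$ independent of $\tau$, so the family $\{\widehat V(\tau,x)\}$ is uniformly bounded by $C|x|^2$. This gives class $D$ at once.

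For RCE, take stopping times $\tau_k\downarrow\tau$. I will prove $\mathbb E\widehat V(\tau_k,x)\to\mathbb E\widehat V(\tau,x)$ by matching upper and lower estimates, using \proref{prop:DPPgeneral} with $\sigma=\tau$ and the stopping time $\tau_k$ in the role of $\tau$ there.

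\emph{Upper bound.} Let $u^*$ be the optimal control at $(\tau,x)$ from \thmref{Opensolve} and \lemref{lem:H3propagation}, and $X^*$ its optimal state. By the DPP,
\[
\mathbb E\widehat V(\tau,x)=\mathbb E\Big[\int_\tau^{\tau_k}\ell(s,X^*,u^*)\,\mathrm ds+\langle P(\tau_k)X^*(\tau_k),X^*(\tau_k)\rangle\Big].
\]
The integral tends to $0$ by dominated convergence, since $\ell(\cdot,X^*,u^*)$ is integrable. The jump increment $X^*(\tau_k)-x$ tends to $0$ in $L^2$, because $X^*$ is c\`adl\`ag, square integrable, and right-continuous at $\tau$. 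Since $|P(\tau_k)|\le C$,
\[
\mathbb E\langle P(\tau_k)X^*(\tau_k),X^*(\tau_k)\rangle-\mathbb E\langle P(\tau_k)x,x\rangle\to0 .
\]
Together these give $\mathbb E\widehat V(\tau,x)=\lim_k\mathbb E\widehat V(\tau_k,x)$.

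\emph{Lower bound.} For this I take the zero control on $(\tau,\tau_k]$ in the DPP. This yields
\[
\mathbb E\widehat V(\tau,x)\le\mathbb E\Big[\int_\tau^{\tau_k}\ell(s,X^{\tau,x,0},0)\,\mathrm ds+\langle P(\tau_k)X^{\tau,x,0}(\tau_k),X^{\tau,x,0}(\tau_k)\rangle\Big],
\]
whose right-hand side differs from $\mathbb E\widehat V(\tau_k,x)$ by $o(1)$ for the same reason as above. So the two estimates are really a single argument.

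Part (ii) is the main step: the submartingale property. Take $\rho\le\sigma\le\tau$ and write $X^0=X^{\rho,x,0}$. Apply \proref{prop:DPPgeneral} at the initial pair $(\sigma,X^0(\sigma))$, which is admissible by \thmref{Prandomtime} Step 3 since $X^0(\sigma)\in L^2$. Choosing the suboptimal zero control on $(\sigma,\tau]$, whose state coincides with $X^0$ by flow uniqueness, gives
\[
\widehat V(\sigma,X^0(\sigma))\le\mathbb E_\sigma\Big[\int_\sigma^\tau\ell(s,X^0,0)\,\mathrm ds+\widehat V(\tau,X^0(\tau))\Big].
\]
Adding $\int_\rho^\sigma\ell\,\mathrm ds$ to both sides yields $Y^{\rho,x}(\sigma)\le\mathbb E_\sigma Y^{\rho,x}(\tau)$.

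It remains to check that $Y^{\rho,x}$ is a $\mathcal T$-system, class $D$, and RCE.
\begin{itemize}
\item \emph{$\mathcal T$-system.} This is inherited from the $\mathcal T$-system property of $P$.
\item \emph{Class $D$.} We have $|Y^{\rho,x}(\tau)|\le C\sup_s|X^0(s)|^2+C\int_\rho^T|X^0|^2\,\mathrm ds$. This bound is a single integrable random variable by \eqref{FSDEestimate}, so the family is uniformly integrable.
\item \emph{RCE.} For $\tau_k\downarrow\tau$, the integral term converges by dominated convergence. For the remaining term I expect some care is needed, because $P(\tau_k)$ and $X^0(\tau_k)$ both vary. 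I would split
\[
\langle P(\tau_k)X^0(\tau_k),X^0(\tau_k)\rangle-\langle P(\tau_k)X^0(\tau),X^0(\tau)\rangle ,
\]
which tends to $0$ in $L^1$ by the boundedness of $P$, the right-continuity of $X^0$, and dominated convergence. That leaves $\mathbb E\langle P(\tau_k)\eta,\eta\rangle\to\mathbb E\langle P(\tau)\eta,\eta\rangle$ with $\eta=X^0(\tau)$. This is RCE for random $\mathcal F_\tau$-measurable initial states, which part (i) gives only for deterministic $x$.
\end{itemize}

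I would handle this last point by repeating the DPP upper and lower argument of (i) with the deterministic $x$ replaced by $\eta\in L^2_{\mathcal F_\tau}$. That argument only used the $L^2$-right-continuity of the optimal and zero-control states started from $(\tau,\eta)$ and the uniform bound on $P$, so it goes through unchanged. Alternatively, one can approximate $\eta$ by simple $\mathcal F_\tau$-measurable variables $\sum_j\mathbf 1_{A_j}x_j$ and use $\mathbf 1_{A_j}P(\tau_k)=\mathbf 1_{A_j}P(\tau_k)$ together with a conditional form of (i).
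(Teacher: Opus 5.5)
Your proposal is correct and follows essentially the same route as the paper. The paper first proves $\mathbb E\widehat V(\tau_k,\xi)\to\mathbb E\widehat V(\tau,\xi)$ for every $\xi\in L^2_{\mathcal F_\tau}(\Omega;\mathbb R^n)$, using the time consistency of the optimal control at $(\tau,\xi)$ and the uniform bound on $P$; this is exactly your ``upper bound'' identity, which already yields the full limit, so your zero-control ``lower bound'' step is redundant. It then applies this claim with $\xi=x$ for (i) and with $\xi=X^{\rho,x,0}(\tau)$ for the RCE in (ii), while the class $D$, $\mathcal T$-system and submartingale arguments (the DPP tested with the zero control) are the same as yours.
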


\begin{proof}
By \thmref{thm:Prandomtime}, $|P(\tau)|\le C$ a.s., uniformly in
$\tau\in\mathcal T[0,T]$. Hence
$|\widehat V(\tau,x)|\le C|x|^2$. 
Moreover, by \eqref{FSDEestimate},
\[
  |Y^{\rho,x}(\tau)|
  \le C\Big(1+\sup_{s\in[\rho,T]}|X^{\rho,x,0}(s)|^2\Big),
  \qquad \tau\in\mathcal T[\rho,T],
\]
and the right-hand side belongs to $L^1(\Omega)$. Thus the systems in (i) and (ii) are of
class $D$.

Fix $\tau_k\downarrow\tau$ in $\mathcal T[0,T]$. We first claim that, for every 
$\xi\in L^2_{\mathcal F_\tau}(\Omega;\mathbb R^n)$,
\begin{equation}\label{eq:RCE_random_initial_state}
  \mathbb E\widehat V(\tau_k,\xi)
  \rightarrow
  \mathbb E\widehat V(\tau,\xi).
\end{equation}
Let $u^*$ be the optimal control for $(\tau,\xi)$, and let $X^*$ be the
corresponding state. By the time consistency consequence of
\proref{prop:DPPgeneral}, the restriction $u^*|_{[\tau_k,T]}$ is optimal for
$(\tau_k,X^*(\tau_k))$. Therefore
\[
  \widehat V(\tau,\xi)
  =
  \mathbb E_\tau\!\left[
    \int_\tau^{\tau_k}\ell(s,X^*(s),u^*(s))\,\mathrm ds
    +\widehat V\bigl(\tau_k,X^*(\tau_k)\bigr)
  \right].
\]
The integral term tends to zero in $L^1(\Omega)$. 
Since
$X^*(\tau_k)\to\xi$ in $L^2(\Omega;\mathbb R^n)$, the boundedness of $P$
yields
\[
  \mathbb E\left|
    \widehat V\bigl(\tau_k,X^*(\tau_k)\bigr)
    -\widehat V(\tau_k,\xi)
  \right|
  \le
  C\,\mathbb E\!\left[
    |X^*(\tau_k)-\xi|\bigl(|X^*(\tau_k)|+|\xi|\bigr)
  \right]\to0 .
\]
Taking expectations gives \eqref{eq:RCE_random_initial_state}. Assertion
(i) follows by taking $\xi=x$.

For (ii), the $\mathcal T[\rho,T]$-system property follows
from the pathwise uniqueness of the state equation and the $\mathcal T$-system
property of $P$. If $\eta,\theta\in\mathcal T[\rho,T]$ and $\eta\le\theta$,
then applying \proref{prop:DPPgeneral} on $[\eta,\theta]$ and testing with the
zero control gives
\[
  \widehat V\bigl(\eta,X^{\rho,x,0}(\eta)\bigr)
  \le
  \mathbb E_\eta\!\Big[
    \int_\eta^\theta
      \ell\bigl(s,X^{\rho,x,0}(s),0\bigr)\,\mathrm ds
    +\widehat V\bigl(\theta,X^{\rho,x,0}(\theta)\bigr)
  \Big].
\]
Hence
$Y^{\rho,x}$ is a submartingale system.

Finally, let $\tau_k\downarrow\tau$ in $\mathcal T[\rho,T]$, and set
$\Xi_k=X^{\rho,x,0}(\tau_k)$ and $\Xi=X^{\rho,x,0}(\tau)$.
Since $X^{\rho,x,0}$ is c\`adl\`ag and belongs to
$S^2_{\mathbb F}(\rho,T;\mathbb R^n)$, we have
$\Xi_k\to\Xi$ in $L^2(\Omega;\mathbb R^n)$. By the uniform boundedness of $P$,
\begin{equation}\label{eq:RCE_state_perturbation}
  \mathbb E\big|
    \widehat V(\tau_k,\Xi_k)-\widehat V(\tau_k,\Xi)
  \big|
  \le
  C\,\mathbb E\big[|\Xi_k-\Xi|(|\Xi_k|+|\Xi|)\big]\to0.
\end{equation}
Applying \eqref{eq:RCE_random_initial_state} with $\xi=\Xi$ gives
\(
  \mathbb E\widehat V(\tau_k,\Xi)
  \rightarrow
  \mathbb E\widehat V(\tau,\Xi).
\)
Moreover, the running cost also satisfies
\begin{equation}\label{eq:RCE_state_running}
  \mathbb E\bigg[\int_\tau^{\tau_k}
  |\ell(s,X^{\rho,x,0}(s),0)|\,\mathrm ds\bigg]\rightarrow0.
\end{equation}
Combining \eqref{eq:RCE_state_perturbation} and \eqref{eq:RCE_state_running},  we obtain
\(
  \mathbb E\,Y^{\rho,x}(\tau_k)
  \rightarrow
  \mathbb E\,Y^{\rho,x}(\tau).
\)
Thus $Y^{\rho,x}$ satisfies RCE.
\end{proof}

\begin{prop}\label{prop:Psemimartingale}
  The $\mathcal T$-system $\{P(\tau)\}_{\tau\in\mathcal T[0,T]}$ admits a
  bounded c\`{a}dl\`{a}g adapted aggregation, still denoted by $P$. Moreover,
  $P$ is a bounded special semimartingale with canonical
  decomposition
  \begin{equation}\label{Psemimartingale_canonical}
    P(t)=P(0)-\mathscr A_P(t)+\mathcal M(t),\qquad t\in[0,T],
  \end{equation}
  where $\mathscr A_P$ is a predictable finite-variation
  $\mathbb S^n$-valued process with $\mathscr A_P(0)=0$, and
  $\mathcal M$ is a c\`{a}dl\`{a}g local martingale admitting the
  representation
  \begin{equation}\label{Psemimartingale_PRP}
    \mathcal M(t)
    =
    \int_0^t\Lambda(s)\,\mathrm dW(s)
    +\int_0^t\int_Z\zeta(s,z)\,\widetilde N(\mathrm ds,\mathrm dz),
  \end{equation}
  for some $\mathbb S^n$-valued predictable integrands $\Lambda$ and
  $\zeta$ that are locally square-integrable.
\end{prop}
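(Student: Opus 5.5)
The plan is to aggregate $P$ entrywise through the scalar $\mathcal T$-systems $\{\langle P(\tau)x,x\rangle\}_\tau=\{\widehat V(\tau,x)\}_\tau$ and then recover the full matrix by polarization. Fix $x\in\mathbb R^n$. By \proref{prop:stopcont}(i), $\{\widehat V(\tau,x)\}$ is a class $D$ $\mathcal T$-system satisfying RCE. The first part of \proref{prop:RCE} then yields a c\`adl\`ag adapted process $p_x$ with $p_x(\tau)=\langle P(\tau)x,x\rangle$ a.s. for every stopping time $\tau$.

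We apply this to the finitely many vectors $e_i$ and $e_i+e_j$ and define
\[
P_{ij}(t):=\tfrac12\big(p_{e_i+e_j}(t)-p_{e_i}(t)-p_{e_j}(t)\big).
\]
This gives a c\`adl\`ag adapted symmetric matrix process. By polarization it agrees with $P(\tau)$ a.s. at every stopping time. Since $|P(\tau)|\le C$ by \thmref{thm:Prandomtime}, the optional section theorem applied to the optional set $\{|P|>C\}$ shows that the aggregated process is bounded by $C$ up to indistinguishability. Uniqueness of the aggregation follows in the same way, from the optional section theorem.

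For the semimartingale property, the plan is to use \proref{prop:stopcont}(ii) with $\rho=0$. For each $x$ the system $Y^{0,x}$ is a class $D$ submartingale satisfying RCE, so by \proref{prop:RCE} its aggregate is a c\`adl\`ag submartingale of class $D$. Doob--Meyer decomposes it as a martingale plus a predictable increasing process, so it is a special semimartingale. Since $X^{0,x,0}$ is a semimartingale and the running integral is continuous with finite variation, $t\mapsto\langle P(t)X^{0,x,0}(t),X^{0,x,0}(t)\rangle$ is a special semimartingale.

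The main obstacle is extracting $P$ itself from these quadratic forms along the moving state, since $X^{0,x,0}$ may become singular at jump times. This is exactly the issue the paper avoids globally. I would instead localize, in one of two ways.

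\emph{Option 1 (localize at state-process jumps).} For each $x$, the uncontrolled fundamental matrix $\Phi$ (with $X^{0,x,0}=\Phi x$) is invertible between jump times of $N$. Let $T_k$ be the jump times of $N$; there are finitely many a.s. since $\lambda(Z)<\infty$. Take stopping times $\sigma$ and let $\tau$ be the first time after $\sigma$ at which $\Phi$ leaves a neighborhood of invertibility. On $[\![\sigma,\tau[\![$ the process $\Phi^{-1}$ is a semimartingale, so $P=\Phi^{-\top}(\Phi^\top P\Phi)\Phi^{-1}$ is a semimartingale there. This requires restarting at $\rho=\sigma$ with $Y^{\rho,x}$ for all $\rho$, which \proref{prop:stopcont}(ii) permits. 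We then patch along a sequence of stopping times increasing to $T$.

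\emph{Option 2 (submartingale correction, which I would try first).} A cleaner route is to show directly that $p_x(t)+K|x|^2t$ is a submartingale for large $K$. The DPP \proref{prop:DPPgeneral} with zero control gives
\[
\langle P(\sigma)x,x\rangle\le\mathbb E_\sigma\Big[\int_\sigma^\tau\ell\,\mathrm ds+\langle P(\tau)X(\tau),X(\tau)\rangle\Big],
\]
with $X$ started from $x$ at $\sigma$. Writing $\langle P(\tau)X(\tau),X(\tau)\rangle-\langle P(\tau)x,x\rangle$ and bounding the cross terms by $C|x|^2(\tau-\sigma)$ would need a conditional bound $\mathbb E_\sigma|X(\tau)-x|\lesssim$ small. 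That bound is only $O(\sqrt{\tau-\sigma})$ because of the $\mathrm dW$ and $\widetilde N$ terms. Since these are martingales, however, $\mathbb E_\sigma[\langle P(\tau)x,X(\tau)-x\rangle]$ involves $P(\tau)-P(\sigma)$, which is not small a priori. So Option 2 is likely circular, and I would fall back on Option 1.

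Once each entry of $P$ is a bounded special semimartingale (bounded semimartingales are automatically special), the canonical decomposition $P=P(0)-\mathscr A_P+\mathcal M$ follows. We then apply the martingale representation theorem on the Wiener--Poisson filtration, which holds because $\mathbb F$ is generated by $W$ and $N$, to the local martingale $\mathcal M$ componentwise. This produces predictable, locally square-integrable $\Lambda$ and $\zeta$, which are symmetric because $P$ is symmetric, giving \eqref{Psemimartingale_PRP}.
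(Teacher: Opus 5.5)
Your fallback Option~1 is essentially the paper's proof. The paper restarts the zero-control state exactly at the Poisson jump times $T_k\wedge T$, so no exit times from a neighborhood of invertibility are needed: between jumps the fundamental matrix $\Phi_k$ solves a continuous linear SDE and stays invertible. It then aggregates the submartingale systems $J_k^x=Y^{T_k\wedge T,x}$ via \proref{prop:RCE} and polarizes to get $\Gamma_k=\Phi_k^\top P\Phi_k+\int_{T_k\wedge T}^{\cdot}\Phi_k^\top Q\Phi_k\,\mathrm ds$. With $\Psi_k=\Phi_k^{-1}$, it sets $P_k=\Psi_k^\top\bigl(\Gamma_k-\int_{T_k\wedge T}^{\cdot}\Phi_k^\top Q\Phi_k\,\mathrm ds\bigr)\Psi_k$ on $[T_k\wedge T,T_{k+1}\wedge T)$ and pastes the finitely many pieces. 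It finishes with the optional section theorem, specialness of a bounded semimartingale, and martingale representation, just as you do.

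The one step you should drop is your Step~1. There you apply the aggregation result to $\{\widehat V(\tau,x)\}$, which is not a submartingale system, and class~$D$ plus RCE alone does not give a c\`adl\`ag aggregate. For example, the deterministic system $\mu(\tau)=f(\tau)$ with $f(t)=\sin(1/(T-t))$ for $t<T$ and $f(T)=0$ is bounded and RCE, but has no left limit at $T$. The c\`adl\`ag aggregate should instead come, as in the paper, from the pasted $P_k$ themselves, which agree with $P(\tau)$ on $\{T_k\wedge T\le\tau<T_{k+1}\wedge T\}$.
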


\begin{proof}
Let $(T_k)_{k\ge1}$ be the successive jump times of the Poisson random measure,
and set $T_0=0$. For $k\ge0$ and $x\in\mathbb R^n$, define
\begin{equation}\label{eq:Jkx_system}
  J_k^x(\tau)
  :=
  \widehat V\bigl(\tau,X^{T_k\wedge T,x,0}(\tau)\bigr)
  +\int_{T_k\wedge T}^{\tau}
    \ell\bigl(s,X^{T_k\wedge T,x,0}(s),0\bigr)\,\mathrm ds,
  \quad
  \tau\in\mathcal T[T_k\wedge T,T].
\end{equation}
By \proref{prop:stopcont}, $\{J_k^x(\tau)\}$ is a class~$D$ submartingale
$\mathcal T[T_k\wedge T,T]$-system satisfying the RCE condition. It follows from \proref{prop:RCE} that $\{J_k^x(\tau)\}$ admits a c\`{a}dl\`{a}g
submartingale aggregation, still denoted by $J_k^x$.
Define the $\mathbb S^n$-valued c\`{a}dl\`{a}g semimartingale by
\begin{equation}\label{eq:Gammak_system}
  \Gamma_k(\tau)
  :=
  \frac14
  \Bigl(
    J_k^{e_i+e_j}(\tau)-J_k^{e_i-e_j}(\tau)
  \Bigr)_{1\le i,j\le n}.
\end{equation}
Let
$\Phi_k(t):=\bigl[X^{T_k\wedge T,e_1,0}(t),\ldots,
X^{T_k\wedge T,e_n,0}(t)\bigr]$.
On $[T_k\wedge T,T_{k+1}\wedge T)$, the equation for $\Phi_k$
reduces to a continuous linear matrix SDE. Hence $\Phi_k$ is invertible on
this interval.
By \thmref{thm:Prandomtime} and the linearity of the state equation,
for every $\tau\in\mathcal T[T_k\wedge T,T]$,
\begin{equation}\label{eq:Gammak_rep}
\Gamma_k(\tau)
  =
  \Phi_k(\tau)^\top P(\tau)\Phi_k(\tau)
  +\int_{T_k\wedge T}^{\tau}
    \Phi_k(s)^\top Q(s)\Phi_k(s)\,\mathrm ds.
\end{equation}
Setting $\Psi_k:=\Phi_k^{-1}$ on this interval, define
\begin{equation}\label{eq:Pk_recovery}
  P_k(t)
  :=
  \Psi_k(t)^\top
  \biggl(
    \Gamma_k(t)
    -\int_{T_k\wedge T}^{t}
      \Phi_k(s)^\top Q(s)\Phi_k(s)\,\mathrm ds
  \biggr)
  \Psi_k(t).
\end{equation}
Since $\Gamma_k$ and $\Psi_k$ are semimartingales and the integral in
\eqref{eq:Pk_recovery} has finite variation, $P_k$ is a c\`{a}dl\`{a}g
semimartingale on this interval. Moreover,
\eqref{eq:Gammak_rep}--\eqref{eq:Pk_recovery} imply
\(P_k(\tau)=P(\tau)\) on
\(\{T_k\wedge T\le\tau<T_{k+1}\wedge T\}\) for every
$\tau\in\mathcal T[0,T]$. Hence, define the process, still denoted by $P$,
by
\[
  P(t):=
  \sum_{k=0}^{\infty}
  P_k(t)\mathbf 1_{\{T_k\wedge T\le t<T_{k+1}\wedge T\}},
  \quad t<T,
  \qquad P(T):=G.
\]
By construction, $P$ is a c\`{a}dl\`{a}g adapted aggregation of
$\{P(\tau)\}_{\tau\in\mathcal T[0,T]}$. Since $\lambda(Z)<\infty$, the
above definition involves only finitely many intervals a.s., hence $P$ is
a semimartingale. The uniform bound in \thmref{thm:Prandomtime} and the
optional section theorem imply that $P$ is bounded. Therefore, $P$ is
special and \eqref{Psemimartingale_canonical} follows. 
Finally, by the martingale representation theorem for the filtration
generated by $W$ and $N$ (see Jacod and
Shiryaev~\cite[Theorem~III.4.34]{jacod2003limit}), we obtain
\eqref{Psemimartingale_PRP}.
\end{proof}

\begin{remark}\label{rem:cadlag}
\proref{prop:Psemimartingale} gives only a local martingale representation.
At this stage, we do not assert a global
$L^2_{\mathbb F}(0,T;\mathbb S^n)\times
G^2_{\mathbb F}(0,T,\lambda;\mathbb S^n)$ estimate for $(\Lambda,\zeta)$,
nor the absolute continuity of $\mathscr A_P$. These properties will be
proved after the Riccati drift is identified.
\end{remark}

 \section{Stochastic Riccati Equation and Closed-Loop Representation}\label{sec:Riccati}

In this section, we identify the semimartingale obtained in \secref{sec:Value Flow} with the solution of the stochastic Riccati equation with jumps, and then derive the closed-loop representation of the open-loop optimal control. 
The main difference from the standard condition case of
Zhang, Dong, and Meng~\cite{zhang2020backward} is that the positive definiteness of
$\widehat R$ no longer follows from positivity of the
coefficients. Instead, it is extracted from the uniform convexity of the cost functional
by a small-interval method.

% The stochastic Riccati equation with jumps associated with Problem (SLQ-J) is
% \begin{equation}\label{SREJ}
% \begin{cases}
%   \mathrm{d} P(s)=-\Big(A(s)^\top P(s-)+P(s-)A(s)+Q(s)+C(s)^\top P(s-)C(s)
%   +C(s)^\top\Lambda(s)+\Lambda(s)C(s) \\
%   \qquad\quad +\displaystyle\int_Z\Big[\zeta(s,z)E(s,z)+E(s,z)^\top\zeta(s,z)+E(s,z)^\top\big(\zeta(s,z)+P(s-)\big)E(s,z)\Big]\lambda(\mathrm{d} z) \\
%   \qquad\quad -\widehat S(s)^\top\widehat R(s)^{-1}\widehat S(s)\Big)\mathrm{d} s
%   +\Lambda(s)\mathrm{d} W(s)+\displaystyle\int_Z\zeta(s,z)\widetilde N(\mathrm{d} s,\mathrm{d} z),\\
%   P(T)=G,
% \end{cases}
% \end{equation}
% where
% \begin{equation}\label{RShat}
% \begin{cases}
%   \widehat S=B^\top P_-+D^\top\Lambda +D^\top P_-C
%   +\displaystyle\int_Z\Big[F^\top\zeta+F^\top(\zeta+P_-)E\Big]\lambda(\mathrm{d} z)+S,\\[4pt]
%   \widehat R=R+D^\top P_-D+\displaystyle\int_Z F^\top(\zeta+P_-)F\,\lambda(\mathrm{d} z).
% \end{cases}
% \end{equation}

\begin{defn}\label{def:strongregular}
A triple $(P,\Lambda,\zeta)\in S^\infty_{\mathbb F}([0,T];\mathbb S^n)\times L^2_{\mathbb F}(0,T;\mathbb S^n)\times G^2_{\mathbb F}(0,T,\lambda;\mathbb S^n)$ is called a \emph{strongly regular solution} of the SRE-J \eqref{SREJ} if
$(P,\Lambda,\zeta)$ satisfies \eqref{SREJ}, and there exists a constant $\delta>0$ such that
\begin{equation}\label{Rhatposdef}
  \widehat R(s)=R(s)+D(s)^\top P(s-)D(s)+\int_Z F(s,z)^\top\big(\zeta(s,z)+P(s-)\big)F(s,z)\lambda(\mathrm{d} z)
  \ge \delta I_m,
\end{equation}
for a.e.\ $s\in[0,T]$, a.s.
\end{defn}

% [Strong regularity and closed-loop representation]
\begin{thm}\label{thm:SREsolve}
Let \ref{SLQH1}--\ref{SLQH2} hold, and suppose that
\eqref{N_0uniformalconvex} holds. 
Let \(P\) be the c\`adl\`ag semimartingale aggregation constructed in
\proref{prop:Psemimartingale}, with martingale components
\((\Lambda,\zeta)\) given by \eqref{Psemimartingale_PRP}.
Then

\begin{enumerate}[label=(\roman*)]
  \item For every stopping time $\tau\in\mathcal T[0,T]$ and every $\xi\in L^2_{\mathcal F_\tau}(\Omega;\mathbb R^n)$, the random-time problem is uniquely open-loop solvable and
  \begin{equation}\label{randomtime_value_in_SRE_section}
    \widehat V(\tau,\xi)=\langle P(\tau)\xi,\xi\rangle,
  \end{equation}

  \item The SRE-J \eqref{SREJ} admits a strongly regular solution
  $(P,\Lambda,\zeta)$, and \eqref{Rhatposdef} holds with
  $\delta=\varepsilon$. Moreover, $P$ is the unique \emph{maximal} strongly
  regular solution: if $(P',\Lambda',\zeta')$ is any strongly regular
  solution of \eqref{SREJ}, then
  \(
    P'(t)\le P(t), t\in[0,T],\text{a.s.}
  \)

  \item For every $(t,\xi)\in\mathcal D$, the unique open-loop optimal control
  admits the closed-loop representation
  \begin{equation}\label{feedback_rep}
    u^*_{t,\xi}(s)=\Theta(s)X^*(s-),
    \qquad
    \Theta(s):=-\widehat R(s)^{-1}\widehat S(s),
  \end{equation}
  where $X^*$ is the corresponding optimal state. In particular, $X^*$ satisfies the closed-loop equation \eqref{closestate} with $\Theta$ defined above,  and there exists a
  constant \(C\), independent of \((t,\xi)\), such that
  \[
    \mathbb E\left[
    \sup_{s\in[t,T]}|X^*(s)|^2+
    \int_t^T|\Theta(s)X^*(s-)|^2\,\mathrm ds
    \right]
    \le C\mathbb E[|\xi|^2] .
  \]
\end{enumerate}
\end{thm}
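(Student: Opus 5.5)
Part (i) is a restatement of \thmref{thm:Prandomtime} combined with \proref{prop:Psemimartingale}: the aggregation satisfies $P(\tau)=P(\tau)$ a.s. for every stopping time, and unique solvability at $(\tau,\xi)$ follows from \lemref{lem:H3propagation} and the Hilbert space argument of \thmref{Opensolve}. The real work is (ii) and (iii). The plan is to identify the drift $\mathscr A_P$ from the Bellman principle (\proref{prop:DPPgeneral}) and the submartingale structure of \proref{prop:stopcont}. We work on each inter-jump interval $[T_k\wedge T,T_{k+1}\wedge T)$, where the zero-control flow $\Phi_k$ is invertible, and glue the pieces using $\lambda(Z)<\infty$. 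For a fixed $x$ and a constant control $u\equiv v$ on a small stochastic interval $(\tau,\tau+h]$, we apply It\^o's formula to $\langle P(s)X(s),X(s)\rangle+\int\ell$ with $P$ given by \eqref{Psemimartingale_canonical}--\eqref{Psemimartingale_PRP}. The resulting process must be a submartingale, with a martingale for the optimal control. This gives the measure inequality
\[
\mathrm d\mathscr A_P \le \Big(\text{Lyapunov-type drift of }\eqref{SREJ}\text{ evaluated at }(P_-,\Lambda,\zeta)\Big)\mathrm ds+\big(\langle\widehat R v,v\rangle+2\langle \widehat S x,v\rangle\big)\text{-terms},
\]
holding for every $(x,v)\in\mathbb Q^n\times\mathbb Q^m$. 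First we use $v=0$, together with the fact that the $Y^{\rho,x}$ systems are submartingales of the form $\Phi_k^\top P\Phi_k+\int\Phi_k^\top Q\Phi_k$. This shows that $\mathscr A_P$ is absolutely continuous with respect to $\mathrm ds$, with an upper bound on its density. Then $\Lambda$ and $\zeta$ become square integrable by the usual It\^o argument on $|P|^2$, since $P$ is bounded.

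The main obstacle is $\widehat R\ge\varepsilon I_m$. Here we use the spike control $u=v\mathbf 1_{(\tau,\tau+h]}$ with zero initial state at $\tau$, together with \lemref{lem:H3propagation}. This gives $\mathbb E_\tau\big[\langle P(\tau+h)X(\tau+h),X(\tau+h)\rangle+\int_\tau^{\tau+h}\ell\big]\ge \varepsilon h|v|^2$. Expanding with It\^o's formula, the leading term is $\mathbb E_\tau\int_\tau^{\tau+h}\langle \widehat R(s) v,v\rangle\mathrm ds$, and the remainder must be shown to be $o(h)$. The difficulty is that the jump part of $X$ has $\mathbb E|X|^4=O(h)$ rather than $O(h^2)$. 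We handle this by splitting $X=X^c+X^d$: the continuous part has $\mathbb E|X^c|^4=O(h^2)$, so cross terms with $\Lambda$ and $\mathrm d\mathscr A_P$ are $o(h)$ by Cauchy--Schwarz. The jump part contributes exactly the compensator term $\int_Z F^\top(\zeta+P_-)F\lambda(\mathrm dz)$ at first order, plus errors involving $\int_\tau^{\tau+h}\int_Z|\zeta|\,\lambda\mathrm ds\cdot O(h)$, which are $o(h)$. Dividing by $h$ and applying Lebesgue differentiation at a.e. $s$ over a countable dense set of $v$ gives $\widehat R\ge\varepsilon I_m$.

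Once $\widehat R$ is invertible, we minimize the quadratic form in $v$ for fixed $x$ to sharpen the inequality for $\mathrm d\mathscr A_P/\mathrm ds$. The reverse inequality comes from the optimal pair: by time consistency, $\langle P X^*,X^*\rangle+\int\ell(X^*,u^*)$ is a martingale. Comparing drifts, first with $X^*(s-)$ replaced by the basis vectors via the local invertibility of the optimal flow inside inter-jump intervals (or simply via linearity in $\xi$), shows that equality holds and that $u^*(s)=-\widehat R^{-1}\widehat S X^*(s-)$. This identifies \eqref{SREJ} and proves (iii). The estimate in (iii) follows from \lemref{existunique} and the representation $u^*=-\mathcal N_t^{-1}\mathcal L_t\xi$ of \thmref{Opensolve}, since $\|\mathcal N_t^{-1}\|\le\varepsilon^{-1}$ and $\mathcal L_t$ is bounded by \proref{prop:NtLtbounded}.

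For maximality, let $(P',\Lambda',\zeta')$ be strongly regular. We apply It\^o's formula to $\langle P'X,X\rangle$ along any admissible $(X,u)$ starting at $(t,x)$ and complete the square with $\widehat R'$. This gives $\widehat J(t,x;u)=\langle P'(t)x,x\rangle+\mathbb E_t\int\langle\widehat R'(u-\Theta'X_-),u-\Theta'X_-\rangle\mathrm ds\ge\langle P'(t)x,x\rangle$, using a localization to justify that the stochastic integrals are martingales. Taking the essinf yields $P(t)\ge P'(t)$. Uniqueness of the maximal solution is then immediate.
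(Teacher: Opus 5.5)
Your part (i), the estimate in (iii) via $u^*=-\mathcal N_t^{-1}\mathcal L_t\xi$, and the maximality argument (completing the square for $(P',\Lambda',\zeta')$) match the paper. The gap is in identifying the drift.

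\emph{The drift inequality and absolute continuity.} With $v=0$, the submartingale property of $Y^{\rho,x}$ gives only $\mathrm d\mathscr N_P\succeq0$, i.e.\ $\mathrm d\mathscr A_P\preceq\Gamma^\circ\,\mathrm ds$. A one-sided bound does not rule out a negative semidefinite singular part of $\mathscr A_P$, so absolute continuity does not follow. In the paper it is an output of $\mathscr R_P\equiv0$ (\lemref{lem:SREidentify}), not an input.

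More importantly, the inequality ``for every $(x,v)$'' that your sharpening step needs is never derived. Dynamic programming gives submartingale inequalities only along the controlled path $X(s-)$, which leaves $x$ and jumps by $O(1)$ with probability $O(h)$. For $x\neq0$ the error $X(s-)^\top\mathrm d\mathscr N_PX(s-)-x^\top\mathrm d\mathscr N_Px$ is linear in $X(s-)-x$. So spike estimates of the type in \lemref{lem:localized_error}, which rely on $X(t_i)=0$, do not cover it.

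Without $\mathrm d\mathscr R_P\succeq0$, the martingale identity for the optimal pair, $X^*(s-)^\top\mathrm d\mathscr R_PX^*(s-)+|\widehat R^{1/2}(u^*-\Theta X^*_-)|^2\,\mathrm ds=0$, forces neither term to vanish. Also, the local invertibility of the optimal flow you invoke is available only after $u^*=\Theta X^*_-$ is known; linearity in $\xi$ alone does not give it (cf.\ \exaref{exam:intro_failure}).

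The paper obtains $\mathrm d\mathscr R_P\succeq0$ directly as a measure, with no density, as follows:
\begin{enumerate}[label=(\alph*)]
\item Work on $(\sigma,\theta]$ with $\theta=T_j\wedge T\wedge\tau_k$ and $\sigma=(T_{j-1}+q^{-1})\wedge\theta$. There the closed-loop flow $\Phi$ solves a jump-free linear SDE and is invertible.
\item Test the stopped problem with the feedback $u=\Theta X_-$ itself. Then $\int\langle y,\Phi_-^\top\mathrm d\mathscr R_P\Phi_-y\rangle$ is a predictable finite-variation submartingale, hence nondecreasing.
\item The optimal pairs from $e_1,\dots,e_n$ then force zero trace, so $\mathscr R_P=0$ there.
\end{enumerate}
Within your own framework, testing with constant-gain feedbacks $u=KX_-$, $K\in\mathbb Q^{m\times n}$, would also work: pulling back by their invertible inter-jump flows gives $\mathrm d\mathscr N_P+(K^\top\widehat RK+K^\top\widehat S+\widehat S^\top K)\,\mathrm ds\succeq0$, and minimizing over $K$ yields $\mathrm d\mathscr R_P\succeq0$. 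Constant controls $u\equiv v$ do not give this.

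\emph{The bound on $\widehat R$.} Your treatment misses the term the paper identifies as the new difficulty. After a jump, $X^d$ is of order $|Fv|$ and interacts with $\Lambda$ and $\mathscr A_P$ through $\int_{(\tau,\tau+h]}|X^d(s-)|^2|\mathrm d\mathscr N_P|(s)$. This is essentially $\mathbb E_\tau\big[\mathbf 1_{\{\text{jump in }(\tau,\tau+h]\}}|\mathscr N_P|((T_1,\tau+h])\big]$. Cauchy--Schwarz with $\mathbb E|X^d|^4=O(h)$ gives only $o(\sqrt h)$, and nothing shows $o(h)$ at a fixed $\tau$. So dividing by $h$ and differentiating is unjustified.

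The paper controls this term only in aggregate. Summing spike controls over a partition of $(a,b]$ turns the error into $\mathbb E\sum_j\beta_j(h)$, with $\beta_j(h)=\mathbf 1_{\{T_j<\tau_k\}}|\mathscr N_P|((T_j,(T_j+h)\wedge\tau_k])$. This tends to $0$ by dominated convergence, since $\sum_j\beta_j(h)\le k\mathsf N_T$. The resulting integrated inequality is made pointwise by the Doob--Meyer argument for the continuous finite-variation submartingale $\int_0^t\phi_{k,\eta}\,\mathrm ds$, not by Lebesgue differentiation under $\mathbb E_\tau$.

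\emph{Square-integrability of $(\Lambda,\zeta)$.} This needs more than boundedness of $P$. You need $|\zeta|\le2\|P\|_\infty$, which holds because $\Delta P(T_j)=\zeta(T_j,Z_j)$. You also need the trace equation to control the nonnegative part of the drift, as in \lemref{lem:BMO}.
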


The proof of \thmref{thm:SREsolve} is given at the end of this section.
We first collect the objects that are available immediately after the
aggregation step of \secref{sec:Value Flow}.
Recall the canonical
decomposition \eqref{Psemimartingale_canonical}--\eqref{Psemimartingale_PRP},
\[
  P(t)=P(0)-\mathscr A_P(t)+\mathcal M(t),
  \qquad
  \mathcal M(t)=\int_0^t\Lambda(s)\,\mathrm dW(s)
  +\int_0^t\int_Z\zeta(s,z)\,\widetilde N(\mathrm ds,\mathrm dz),
\]
in which $\mathscr A_P$ is the predictable finite-variation part of $P$,
and $(\Lambda,\zeta)$
are locally square-integrable. Inspired by the SRE-J \eqref{SREJ}, we introduce the
\emph{Lyapunov drift}
\begin{equation}\label{Gammacirc_def}
\begin{aligned}
  \Gamma^\circ
  :=&\;A^\top P_-+P_-A+Q+C^\top P_-C+C^\top\Lambda+\Lambda C +\int_Z\Big[\zeta E+E^\top\zeta+E^\top(\zeta+P_-)E\Big]
  \lambda(\mathrm{d} z),
\end{aligned}
\end{equation}
which is exactly the part of the generator of
\eqref{SREJ} that is linear in the solution.
For use in the It\^o identity, we introduce the auxiliary finite-variation
measure
\begin{equation}\label{rawresidualmeasure}
  \mathrm d\mathscr N_P(s):=\Gamma^\circ(s)\,\mathrm ds
  -\mathrm d\mathscr A_P(s).
\end{equation}
Once $\widehat R$ is known to be invertible, the
\emph{completed residual measure} is defined by
\begin{equation}\label{completedresidual}
  \mathrm d\mathscr R_P(s):=
  \Big(\Gamma^\circ(s)-\widehat S(s)^\top\widehat R(s)^{-1}\widehat S(s)\Big)
  \mathrm ds-\mathrm d\mathscr A_P(s).
\end{equation}
Thus proving that $(P,\Lambda,\zeta)$ solves the SRE-J is equivalent to proving
$\mathscr R_P\equiv0$.

Under the standard condition, $\widehat R>0$ follows directly from the
positivity assumptions. For the present indefinite problem, however, the
positive definiteness of $\widehat R$ must be derived from the uniform
convexity condition. The proof is organized as follows:

\begingroup
\footnotesize
\setlength{\arraycolsep}{3pt}
\setlength{\fboxsep}{3pt}
\renewcommand{\arraystretch}{1.05}
\[
\begin{array}{@{}c@{\hspace{0.6em}}c@{\hspace{0.6em}}c@{\hspace{0.6em}}c@{\hspace{0.6em}}c@{\hspace{0.6em}}c@{\hspace{0.6em}}c@{}}
\fbox{$\begin{array}{c}
\text{\proref{prop:Psemimartingale}}\\[1pt]
P=P(0)-\mathscr A_P+\mathcal M
\end{array}$}
&\Longrightarrow&
\fbox{$\begin{array}{c}
\text{\lemref{lem:jumpito}}\\[1pt]
\text{It\^o identity with }\mathscr N_P
\end{array}$}
&\overset{\text{\lemref{lem:localized_error}}}{\underset{\text{spike controls}}{\Longrightarrow}}
&
\fbox{$\begin{array}{c}
\text{\lemref{lem:Rhat_unifpos}}\\[1pt]
\widehat R\ge\varepsilon I_m
\end{array}$}
\\[6pt]
&&&&\Downarrow\;{\scriptstyle\text{\corref{cor:complsq_identity}}}\\[4pt]
\fbox{$\begin{array}{c}
\text{\thmref{thm:SREsolve}}\\[1pt]
\text{SRE-J strongly regularly solvable}\\
\text{and closed-loop representation}
\end{array}$}
% &\Longleftarrow&
% \fbox{$\begin{array}{c}
% \text{\lemref{lem:prime_admissibility}}\\[1pt]
% \text{verification identity}\\
% \text{and optimal feedback}
% \end{array}$}
% &\overset{\text{\corref{cor:complsq_identity}}}{\Longleftarrow}&
&\Longleftarrow&
\fbox{$\begin{array}{c}
\text{\lemref{lem:BMO}}\\[1pt]
(\Lambda,\zeta)\in L^2_{\mathbb F}\times G^2_{\mathbb F}
% \text{feedback admissibility}
\end{array}$}
&\Longleftarrow&
\fbox{$\begin{array}{c}
\text{\lemref{lem:SREidentify}}\\[1pt]
\mathscr R_P\equiv0\\
\text{SRE-J drift identified}
\end{array}$}
\end{array}
\]
\endgroup

\subsection[It\^o identity with jumps and positivity of R hat]
{It\^o identity with jumps and positivity of $\widehat R$}\label{sec:Rhatpos}

Throughout this subsection we use one localization. By the local
square-integrability of $(\Lambda,\zeta)$ in \eqref{Psemimartingale_PRP} and the
definition of $\mathscr N_P$, the increasing process
\[
  K(t):=
  \int_0^t\Big(|\Lambda(s)|^2+
  \int_Z|\zeta(s,z)|^2\lambda(\mathrm dz)\Big)\mathrm ds
  +|\mathscr N_P|((0,t])
\]
is finite on $[0,T]$ pathwise. We choose stopping times $\tau_k\uparrow T$
such that
\begin{equation}\label{localization_sequence}
  \int_0^{\tau_k}\!\Big(|\Lambda(s)|^2
  +\int_Z|\zeta(s,z)|^2\lambda(\mathrm{d} z)\Big)\mathrm{d} s
  +|\mathscr N_P|\big((0,\tau_k]\big)\le k,
  \qquad\text{a.s.}
\end{equation}
Since $K(T)<\infty$ a.s., we have $\mathbb P(\tau_k=T)\to1$. By the definition \eqref{RShat}, the boundedness of $P$ and of the coefficients, and $\lambda(Z)<\infty$, we have
\begin{equation}\label{RS_pointwise_bound}
  |\widehat R(s)|+|\widehat S(s)|
  \le C\bigg(1+|\Lambda(s)|
  +\Big(\int_Z|\zeta(s,z)|^2\lambda(\mathrm dz)\Big)^{1/2}\bigg),
\end{equation}
so that \eqref{localization_sequence} and the Cauchy--Schwarz inequality give
the pathwise bounds
\begin{equation}\label{RS_local_bound}
  \int_0^{\tau_k}\Big(|\widehat R(s)|+|\widehat S(s)|^2\Big)\,\mathrm ds
  \le C(1+k),
  \qquad\text{a.s.},\quad k\ge1 .
\end{equation}

% [Raw jump-It\^o identity]

 \begin{lem}\label{lem:jumpito}
Let \ref{SLQH1}--\ref{SLQH2} and \eqref{N_0uniformalconvex} hold. Let $P$ be
the bounded c\`adl\`ag semimartingale in \proref{prop:Psemimartingale}, and
let $\Gamma^\circ$ and $\mathscr N_P$ be defined by
\eqref{Gammacirc_def}--\eqref{rawresidualmeasure}. Fix
$(t,\xi)\in\mathcal D$ and $u\in\mathcal U[t,T]$, and let
$X=X^{t,\xi;u}$ be the corresponding state. Let
$\sigma,\theta\in\mathcal T[t,T]$ satisfy
$\sigma\le\theta\le\tau_k$ for some $k\ge1$. Suppose that
\begin{equation}\label{jumpito_integrability}
\mathbb E\bigg[\int_\sigma^\theta
\Big(
|\langle\widehat R(s)u(s),u(s)\rangle|
+
|\langle\widehat S(s)X(s-),u(s)\rangle|
\Big)\mathrm ds\bigg]<\infty .
\end{equation}
Then
\begin{equation}\label{jumpitoid_stopped}
\begin{aligned}
&\mathbb E\langle P(\theta)X(\theta),X(\theta)\rangle
-\mathbb E\langle P(\sigma)X(\sigma),X(\sigma)\rangle
+\mathbb E\bigg[\int_\sigma^\theta \ell(s,X(s),u(s))\,\mathrm ds\bigg]\\
=&\;
\mathbb E\bigg[\int_\sigma^\theta
\langle\widehat R(s)u(s),u(s)\rangle
+2\langle\widehat S(s)X(s-),u(s)\rangle
\mathrm ds\bigg]
+\mathbb E\bigg[\int_{(\sigma,\theta]}
X(s-)^\top\mathrm d\mathscr N_P(s)X(s-)\bigg].
\end{aligned}
\end{equation}
Moreover, \eqref{jumpito_integrability} holds whenever $u$ is bounded on
$(\sigma,\theta]$.
\end{lem}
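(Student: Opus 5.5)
The plan is to apply the jump It\^o product formula to $\langle P(s)X(s),X(s)\rangle$ on $(\sigma,\theta]$. Both factors are semimartingales: $P=P(0)-\mathscr A_P+\mathcal M$ by \proref{prop:Psemimartingale}, with $\mathcal M$ given by \eqref{Psemimartingale_PRP}, and $X$ solves \eqref{state}. We then take expectations and regroup the $\mathrm ds$-terms.

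\emph{Expansion.} Write $\mathrm d(X^\top PX)=X_-^\top\mathrm dP\,X_-+2\langle P_-X_-,\mathrm dX\rangle+\mathrm d[X^\top PX]_{\text{brackets}}$, and collect the continuous and jump covariations. The continuous ones are:
\begin{itemize}
\item $\langle P_-(CX_-+Du),CX_-+Du\rangle\,\mathrm ds$, from $[X,X]^c$;
\item $2\langle\Lambda(CX_-+Du),X_-\rangle\,\mathrm ds$, from $[P,X]^c$.
\end{itemize}
At a jump of $N$ with mark $z$ we have $\Delta X=(EX_-+Fu)$ and $\Delta P=\zeta$. The full jump increment is then
\[
(X_-+\Delta X)^\top(P_-+\zeta)(X_-+\Delta X)-X_-^\top P_-X_-,
\]
integrated against $N(\mathrm ds,\mathrm dz)$. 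We split it as $\widetilde N+\lambda(\mathrm dz)\mathrm ds$. The compensator part equals
\[
\int_Z\Big[X_-^\top\zeta X_-+2\langle(\zeta+P_-)(EX_-+Fu),X_-\rangle+\langle(\zeta+P_-)(EX_-+Fu),EX_-+Fu\rangle\Big]\lambda(\mathrm dz)\,\mathrm ds,
\]
minus the terms $X_-^\top\zeta X_-$ and $2\langle P_-(EX_-+Fu),X_-\rangle$. These are already accounted for in the compensated integrals of $X_-^\top\mathrm d\mathcal M X_-$ and $2\langle P_-X_-,\mathrm dX\rangle$, so I will bookkeep them carefully.

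\emph{Regrouping.} Sort the resulting $\mathrm ds$-coefficients by their degree in $u$:
\begin{itemize}
\item the terms quadratic in $X_-$ give $X_-^\top(\Gamma^\circ-Q)X_-$;
\item the mixed terms give $2\langle(\widehat S-S)X_-,u\rangle$;
\item the terms quadratic in $u$ give $\langle(\widehat R-R)u,u\rangle$, using \eqref{Gammacirc_def} and \eqref{RShat}.
\end{itemize}
The finite-variation part of $P$ contributes $-X_-^\top\mathrm d\mathscr A_P X_-$. Adding $\ell(s,X,u)$ (note $X=X_-$ for a.e.\ $s$), the $Q,S,R$ terms complete $\Gamma^\circ,\widehat S,\widehat R$. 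Then $\Gamma^\circ\mathrm ds-\mathrm d\mathscr A_P=\mathrm d\mathscr N_P$ by \eqref{rawresidualmeasure}, which yields the integrand of \eqref{jumpitoid_stopped}.

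\emph{Integrability; expected main obstacle.} The delicate step is justifying the expectations, since $\Lambda,\zeta$ are only locally square-integrable and $\widehat R,\widehat S$ are unbounded. Here $\theta\le\tau_k$ is used. The stochastic integrals are $\int X_-^\top\Lambda X_-\mathrm dW$, $\int\langle P_-X_-,(CX_-+Du)\rangle\mathrm dW$, and their $\widetilde N$ analogues. Their quadratic variations are bounded by $\sup|X|^4$ times $\int_0^{\tau_k}|\Lambda|^2\le k$, and by bounded $P$ times $\sup|X|^2(|X|^2+|u|^2)$. Hence, by BDG, they are local martingales whose supremum is integrable after a further localization by $\varrho_j=\inf\{s:|X(s)|\ge j\}$.

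Pass $j\to\infty$ as follows:
\begin{itemize}
\item the left side converges by dominated convergence, using bounded $P$, $X\in S^2$ and $u\in L^2$;
\item the $\mathscr N_P$ term is bounded by $\sup|X|^2\,|\mathscr N_P|((0,\tau_k])\le k\sup|X|^2$, which is integrable;
\item the $\widehat R,\widehat S$ terms converge by \eqref{jumpito_integrability} and dominated convergence.
\end{itemize}
The cross terms $\langle\Lambda X_-,Du\rangle$ and $\zeta$-terms are absorbed into $\widehat S,\widehat R$, so no separate bound is needed beyond \eqref{jumpito_integrability}.

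For the final claim, assume $|u|\le c$ on $(\sigma,\theta]$. By \eqref{RS_local_bound}, $\mathbb E\int_\sigma^\theta|\widehat R||u|^2\le c^2C(1+k)$. By Cauchy--Schwarz, $\int|\widehat S||X_-||u|\le c\sup|X|\,(T\int_0^{\tau_k}|\widehat S|^2)^{1/2}\le C_k\sup|X|$, whose expectation is finite.
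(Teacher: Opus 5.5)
Your proposal is correct and follows the paper's proof essentially step by step. Both arguments apply It\^o's formula to $\langle PX,X\rangle$ and regroup the $\mathrm ds$-coefficients into $\Gamma^\circ$, $\widehat S$, $\widehat R$ to obtain \eqref{P-complete}. Both then localize the martingale part and pass to the limit by dominated convergence, using $\|P\|_\infty\sup|X|^2$, $|\mathscr N_P|((0,\tau_k])\le k$ and \eqref{jumpito_integrability}, and both treat the bounded-$u$ case via \eqref{RS_local_bound}.

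The only difference is that you localize concretely with $\varrho_j$ and BDG, whereas the paper takes an abstract localizing sequence for $\mathfrak m^u$. For the $\widetilde N$-terms involving $\zeta$ and $|Fu|^2$ this works because $P_-+\zeta=P$ at the atoms of $N$, so these terms are bounded there; you should state that explicitly.
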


\begin{proof}
Set
\(
  \beta:=AX_-+Bu,
  \gamma:=CX_-+Du,
  \delta(s,z):=E(s,z)X(s-)+F(s,z)u(s).
\)
Then
\(
  \mathrm dX(s)=\beta(s)\mathrm ds+\gamma(s)\mathrm dW(s)
  +\int_Z\delta(s,z)\widetilde N(\mathrm ds,\mathrm dz).
\)
Applying It\^o's formula for c\`{a}dl\`{a}g semimartingales
(see, e.g., Jacod and
Shiryaev~\cite[Theorem~I.4.57]{jacod2003limit}) to
$\langle P(s)X(s),X(s)\rangle$ on $(\sigma,\theta]$ and adding
$\ell(s,X(s),u(s))\,\mathrm ds$, we obtain
\begin{equation}\label{P-complete}
\begin{aligned}
  &\mathrm d\langle P(s)X(s),X(s)\rangle
  +\ell(s,X(s),u(s))\,\mathrm ds \\
  =\;&X(s-)^\top\mathrm d\mathscr N_P(s)\,X(s-)
  +2\langle\widehat S(s)X(s-),u(s)\rangle\,\mathrm ds
  +\langle\widehat R(s)u(s),u(s)\rangle\,\mathrm ds
  +\mathrm d\mathfrak m^u(s),
\end{aligned}
\end{equation}
where $\mathfrak m^u$ is the local martingale
\begin{equation*}\label{eq:mu_martingale}
\begin{aligned}
  \mathfrak m^u(s)
  =&\int_\sigma^s
  \big(X_-^\top\Lambda\,X_-+2\,X_-^\top P_-\gamma\big)\,\mathrm dW(r) +\int_\sigma^s\!\int_Z
  \Big[(X_-+\delta)^\top(P_-+\zeta)(X_-+\delta)
  -X_-^\top P_-\,X_-\Big]\widetilde N(\mathrm dr,\mathrm dz).
\end{aligned}
\end{equation*}

Let $(\nu_j)_{j\ge1}$ be a sequence of stopping times with $\nu_j\ge\sigma$, $\nu_j\uparrow\infty$ a.s., such that
$\mathfrak m^u(\cdot\wedge\nu_j)$ is a martingale on $[\sigma,T]$.
Integrating \eqref{P-complete} over $(\sigma,\theta\wedge\nu_j]$ and taking
expectations gives
\[
\begin{aligned}
&\mathbb E\langle P(\theta\wedge\nu_j)X(\theta\wedge\nu_j),
X(\theta\wedge\nu_j)\rangle
-\mathbb E\langle P(\sigma)X(\sigma),X(\sigma)\rangle
+\mathbb E\bigg[\int_\sigma^{\theta\wedge\nu_j}
\ell(s,X(s),u(s))\,\mathrm ds\bigg]\\
=&\;
\mathbb E\bigg[\int_\sigma^{\theta\wedge\nu_j}
\Big[
\langle\widehat R(s)u(s),u(s)\rangle
+2\langle\widehat S(s)X(s-),u(s)\rangle
\Big]\mathrm ds\bigg]+
\mathbb E\bigg[\int_{(\sigma,\theta\wedge\nu_j]}
X(s-)^\top\mathrm d\mathscr N_P(s)X(s-)\bigg].
\end{aligned}
\]
Letting $j\to\infty$ and applying the dominated convergence theorem gives \eqref{jumpitoid_stopped}. Indeed, the terminal
terms are dominated by
$\|P\|_\infty\sup_{r\in[t,\theta]}|X(r)|^2$, the running cost is integrable by
\ref{SLQH1}--\ref{SLQH2}, $u\in\mathcal U[t,T]$, and \lemref{existunique}, and
the finite-variation term is controlled by
\[
  \mathbb E\bigg[\int_{(\sigma,\theta]}|X(s-)|^2\,|\mathrm d\mathscr N_P|(s)\bigg]
  \le
  k\mathbb E \Big[\sup_{r\in[t,\theta]}|X(r)|^2\Big]<\infty .
\]
In addition, the two terms involving $\widehat R$ and $\widehat S$ are integrable by
\eqref{jumpito_integrability}. 

Finally, suppose $|u|\le M$ on $(\sigma,\theta]$. By \eqref{RS_local_bound}
and \lemref{existunique},
\[
  \mathbb E\bigg[\int_\sigma^\theta
  \Big(|\langle\widehat R(s)u(s),u(s)\rangle|
  +|\langle\widehat S(s)X(s-),u(s)\rangle|\Big)\mathrm ds\bigg]
  \le C(1+k)\Big(M^2+M\,\mathbb E\Big[\sup_{r\in[t,T]}|X(r)|\Big]\Big)<\infty,
\]
which verifies the condition in \eqref{jumpito_integrability}.
\end{proof}
 
% The small-interval argument below tests the uniform convexity
% \eqref{N_0uniformalconvex} on stochastic intervals
% $[\sigma,\tau]$ with terminal weight $P(\tau)$. 

Now, we introduce the following stopped problem.
% $\mathbf{(SLQ\text{-}J)}_{\sigma}^\tau$. 
Let $\sigma,\tau\in\mathcal T[0,T]$ with $\sigma\le\tau$. For
$\xi\in L^2_{\mathcal F_\sigma}(\Omega;\mathbb R^n)$ and
$u\in\mathcal U[\sigma,\tau]$, let
$X^{\sigma,\xi;u}$ be the solution of the state equation \eqref{state} on
$[\sigma,\tau]$ with initial condition $X^{\sigma,\xi;u}(\sigma)=\xi$. Define
\begin{equation}\label{Jtaumin}
\begin{aligned}
  \widehat J^\tau(\sigma,\xi;u)
  &:=
  \mathbb E_\sigma\left[
    \int_\sigma^\tau
    \ell\big(s,X^{\sigma,\xi;u}(s),u(s)\big)\,\mathrm ds
    +\big\langle P(\tau)X^{\sigma,\xi;u}(\tau),
    X^{\sigma,\xi;u}(\tau)\big\rangle
  \right],\\
  J^\tau(\sigma,\xi;u)
  &:=
  \mathbb E\,\widehat J^\tau(\sigma,\xi;u),
  \qquad
  V^\tau(\sigma,\xi)
  :=
  \inf_{u\in\mathcal U[\sigma,\tau]}J^\tau(\sigma,\xi;u).
\end{aligned}
\end{equation}
% We call this problem $\mathbf{(SLQ\text{-}J)}_{\sigma}^\tau$.

% For $\sigma,\tau\in\mathcal T[0,T]$ with
% $\sigma\le\tau$, $\xi\in L^2_{\mathcal F_\sigma}(\Omega;\mathbb R^n)$, and
% $u\in\mathcal U[\sigma,\tau]$, define
% \begin{equation}\label{Jtaumin}
% \begin{aligned}
%   \widehat J^\tau(\sigma,\xi;u)
%   &:=
%   \mathbb E_\sigma\left[
%     \int_\sigma^\tau \ell(s,X(s),u(s))\,\mathrm ds
%     +\langle P(\tau)X(\tau),X(\tau)\rangle
%   \right],\\
%   J^\tau(\sigma,\xi;u)
%   &:=
%   \mathbb E\,\widehat J^\tau(\sigma,\xi;u),
%   \qquad
%   V^\tau(\sigma,\xi)
%   :=
%   \inf_{u\in\mathcal U[\sigma,\tau]}J^\tau(\sigma,\xi;u),
% \end{aligned}
% \end{equation}
% where $X$ is the state on $[\sigma,\tau]$ with $X(\sigma)=\xi$.

\begin{prop}\label{prop:stoppedunifconv}
Let \ref{SLQH1}--\ref{SLQH2} hold and suppose
\eqref{N_0uniformalconvex} holds. Then, for all stopping times
$\sigma\le\tau$,
\begin{equation}\label{eq:stopped_unifconv}
  J^\tau(\sigma,0;u)
  \ge
  \varepsilon\,\mathbb E\bigg[\int_\sigma^\tau |u(s)|^2\,\mathrm ds\bigg],
  \qquad u\in\mathcal U[\sigma,\tau].
\end{equation}
Consequently, the stopped problem is uniquely open-loop solvable, and
\(
  V^\tau(\sigma,\xi)
  =
  \mathbb E\langle P(\sigma)\xi,\xi\rangle .
\)
\end{prop}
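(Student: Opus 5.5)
The approach is to reduce the stopped problem on $[\sigma,\tau]$ to the full problem on $[\sigma,T]$ by concatenation, using the dynamic programming principle (\proref{prop:DPPgeneral}) and the propagated convexity (\lemref{lem:H3propagation}).

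\emph{Uniform convexity of the stopped problem.} Fix $u\in\mathcal U[\sigma,\tau]$ and let $X=X^{\sigma,0;u}$ on $[\sigma,\tau]$. By \proref{prop:Psemimartingale} and \thmref{thm:Prandomtime}, $\langle P(\tau)X(\tau),X(\tau)\rangle=\widehat V(\tau,X(\tau))$. The problem from $(\tau,X(\tau))$ is uniquely open-loop solvable; let $v^*\in\mathcal U[\tau,T]$ be its optimal control. Define the concatenation $\bar u:=u\mathbf 1_{(\sigma,\tau]}+v^*\mathbf 1_{(\tau,T]}\in\mathcal U[\sigma,T]$. Pathwise uniqueness of the state equation shows that the state of $\bar u$ from $(\sigma,0)$ equals $X$ on $[\sigma,\tau]$ and then follows the optimal state from $(\tau,X(\tau))$. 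Hence, by the tower property,
\[
  J^\tau(\sigma,0;u)=\mathbb E\,\widehat J(\sigma,0;\bar u)\ge \varepsilon\,\mathbb E\int_\sigma^T|\bar u|^2\,\mathrm ds\ge\varepsilon\,\mathbb E\int_\sigma^\tau|u|^2\,\mathrm ds,
\]
where the first inequality is \lemref{lem:H3propagation} with $\tau$ replaced by $\sigma$, after taking expectations. This proves \eqref{eq:stopped_unifconv}.

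\emph{Unique solvability.} The cost $u\mapsto J^\tau(\sigma,\xi;u)$ is a continuous quadratic functional on the Hilbert space $\mathcal U[\sigma,\tau]$. This uses that $P(\tau)$ is bounded and the estimates of \lemref{existunique}. Its quadratic part is $J^\tau(\sigma,0;\cdot)$, so it has the form $\llbracket\mathcal N u,u\rrbracket+2\llbracket \mathcal L\xi,u\rrbracket+c(\xi)$ with $\mathcal N$ bounded, self-adjoint and satisfying $\mathcal N\ge\varepsilon I$. As in \thmref{Opensolve}, $\mathcal N$ is boundedly invertible, so there is a unique minimizer $u^*=-\mathcal N^{-1}\mathcal L\xi$. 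Alternatively, I would repeat the FBSDE representation of \secref{sec:hilbert} with terminal weight $P(\tau)$ at the stopping time $\tau$; this needs only cosmetic changes.

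\emph{Value identification.} By \proref{prop:DPPgeneral}, $\widehat V(\sigma,\xi)=\operatorname*{ess\,inf}_u\widehat J^\tau(\sigma,\xi;u)$. The main obstacle is exchanging the expectation with the essential infimum. I would handle it as follows.
\begin{enumerate}[label=(\alph*)]
  \item Lower bound: for every $u$, $\widehat J^\tau(\sigma,\xi;u)\ge\widehat V(\sigma,\xi)$ a.s., so $V^\tau(\sigma,\xi)\ge\mathbb E\widehat V(\sigma,\xi)$.
  \item Upper bound: take $u$ to be the restriction to $(\sigma,\tau]$ of the optimal control $u^*$ for $(\sigma,\xi)$ on $[\sigma,T]$. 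By the time-consistency consequence of \proref{prop:DPPgeneral}, the restriction $u^*|_{[\tau,T]}$ is optimal from $(\tau,X^*(\tau))$. This gives $\widehat J^\tau(\sigma,\xi;u^*|_{(\sigma,\tau]})=\widehat V(\sigma,\xi)$.
\end{enumerate}
Hence $V^\tau(\sigma,\xi)=\mathbb E\widehat V(\sigma,\xi)=\mathbb E\langle P(\sigma)\xi,\xi\rangle$ by \thmref{thm:Prandomtime}. The restricted control is also the unique stopped optimizer.
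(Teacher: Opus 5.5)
Your proposal is correct and follows the paper's proof essentially step by step. The uniform convexity comes from concatenating $u$ with the optimal continuation $v^*$ and applying \lemref{lem:H3propagation} at $\sigma$. Unique solvability comes from the Hilbert-space argument of \thmref{Opensolve} with terminal weight $P(\tau)$, and the value identification from \proref{prop:DPPgeneral}. Your explicit lower/upper-bound argument for exchanging $\mathbb E$ with the essential infimum fills in a step the paper compresses into ``taking expectations.'' For that upper bound you do not need full time consistency: the definitional inequality $\widehat V(\tau,X^*(\tau))\le\widehat J(\tau,X^*(\tau);u^*|_{(\tau,T]})$ already suffices.
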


\begin{proof}
Fix $u\in\mathcal U[\sigma,\tau]$, and let $X$ be the state on
$[\sigma,\tau]$ starting from $0$. By \lemref{lem:H3propagation} and the
Hilbert-space method in \thmref{thm:Prandomtime}, the problem starting
from $(\tau,X(\tau))$ admits a unique optimal continuation
$v^*\in\mathcal U[\tau,T]$. Set
\[
  \bar u:=u\mathbf 1_{[\sigma,\tau]}+v^*\mathbf 1_{(\tau,T]} .
\]
Then $\bar u\in\mathcal U[\sigma,T]$. Since $v^*$ attains the infimum at
$(\tau,X(\tau))$, the optimality of $v^*$ and
\thmref{thm:Prandomtime} give
$\langle P(\tau)X(\tau),X(\tau)\rangle
=\widehat V(\tau,X(\tau))
=\widehat J(\tau,X(\tau);v^*)$, and therefore
$J^\tau(\sigma,0;u)=J(\sigma,0;\bar u)$.
Applying \lemref{lem:H3propagation} at $\sigma$ yields
\[
  J^\tau(\sigma,0;u)
  =J(\sigma,0;\bar u)
  \ge\varepsilon\,\mathbb E\bigg[\int_\sigma^T|\bar u(s)|^2\,\mathrm ds\bigg]
  \ge\varepsilon\,\mathbb E\bigg[\int_\sigma^\tau|u(s)|^2\,\mathrm ds\bigg] .
\]
This proves \eqref{eq:stopped_unifconv}. Unique open-loop solvability of the
stopped problem then follows from \thmref{Opensolve},
applied with terminal weight $P(\tau)$ in place of $G$.
Finally, \proref{prop:DPPgeneral} gives
$\operatorname*{ess\,inf}_{u\in\mathcal U[\sigma,\tau]}
\widehat J^\tau(\sigma,\xi;u)
=\widehat V(\sigma,\xi)
=\langle P(\sigma)\xi,\xi\rangle$.
Taking expectations yields $V^\tau(\sigma,\xi)=\mathbb E\langle P(\sigma)\xi,\xi\rangle$.
\end{proof}

We next derive the positivity of $\widehat R$ from the uniform convexity.
Fix $k\ge1$, $\eta\in\mathbb R^m$, $0\le a<b\le T$, and $h\in(0,1)$.
Set $M:=\lceil(b-a)/h\rceil$ and
\begin{equation}\label{partition_def}
  t_i:=(a+ih)\wedge b,
  \qquad
  \rho_i:=t_i\vee\big(t_{i+1}\wedge\tau_k\big),
  \qquad i=0,1,\ldots,M-1.
\end{equation}
Let $X^i$ be the state on $[t_i,\rho_i]$ with initial condition $X^i(t_i)=0$ 
under the spike control $u(s)=\eta\mathbf 1_{(t_i,\rho_i]}(s)$. The intervals $(t_i,\rho_i]$
are pairwise disjoint, and on $\{a<\tau_k\}$ their union is
$(a,b\wedge\tau_k]$. On $\{\tau_k\le t_i\}$ one has $\rho_i=t_i$, and the
corresponding interval and all integrals over it are empty.
The next lemma shows that the \(\widehat S\) and \(\mathscr N_P\) terms
generated by the spike controls vanish after summing over the above partition
as \(h\downarrow0\).

% [Summed short-interval error estimate]
\begin{lem}\label{lem:localized_error}
Let \ref{SLQH1}--\ref{SLQH2} and \eqref{N_0uniformalconvex} hold, and fix
$k\ge1$ and $\eta\in\mathbb R^m$. There exists a constant $C>0$, independent
of $a$, $b$, $h$, and $i$, such that
\begin{equation}\label{shortinterval_cond2}
  \mathbb E\Big[\sup_{r\in[t_i,\rho_i]}|X^i(r)|^2\Big]\le Ch|\eta|^2,
  \qquad
  \mathbb E\Big[\sup_{r\in[t_i,\rho_i]}|X^i(r)|^4\Big]\le Ch|\eta|^4 .
\end{equation}
Moreover,
\begin{equation}\label{summed_error}
  \lim_{h\downarrow0}\,
  \sum_{i=0}^{M-1}\mathbb E\bigg[
  \int_{t_i}^{\rho_i}
  \big|\big\langle\widehat S(s)X^i(s-),\eta\big\rangle\big|\,\mathrm ds
  +\int_{(t_i,\rho_i]}|X^i(s-)|^2\,|\mathrm d\mathscr N_P|(s)
  \bigg]=0 .
\end{equation}
\end{lem}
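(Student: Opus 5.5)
We first prove the moment bounds \eqref{shortinterval_cond2} by splitting $X^i$ into a continuous part and a jump part. On $[t_i,\rho_i]$ with $X^i(t_i)=0$ and $u=\eta$, write $X^i=X^i_c+X^i_d$. Here $X^i_d(s):=\int_{t_i}^{s}\int_Z[E X^i(r-)+F\eta]\,N(\mathrm dr,\mathrm dz)$ collects the actual jumps. The remaining part $X^i_c$ solves a continuous linear SDE whose drift carries the compensator $-\int_Z(EX^i_-+F\eta)\lambda(\mathrm dz)$. We start with the second moment. Apply \lemref{existunique} on the interval, or more directly BDG and Gronwall. The inhomogeneities $B\eta$, $D\eta$, $F\eta$ are bounded and act only on an interval of length at most $h$, so $\mathbb E\sup|X^i|^2\le C\mathbb E[\int_{t_i}^{\rho_i}|\eta|^2\mathrm ds]\le Ch|\eta|^2$. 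For the fourth moment we use a different argument for each piece. For the continuous part, BDG with $p=4$ gives $\mathbb E\sup|X^i_c|^4\le C(h^4+h^2)|\eta|^4+C h\,\mathbb E\sup|X^i|^4$ plus lower-order terms. For the jump part, $|X^i_d|\le \sup_r(\|E\|_\infty|X^i(r-)|+\|F\|_\infty|\eta|)\,N((t_i,\rho_i]\times Z)$. Since $N((t_i,t_i+h]\times Z)$ is Poisson with mean $\lambda(Z)h$, its fourth moment is $O(h)$ for $h<1$. Moreover $N((t_i,t_i+h]\times Z)$ is independent of $\mathcal F_{t_i}$, and the deterministic start $X^i(t_i)=0$ lets us argue pathwise. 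Between jumps, and up to the first jump, the state is bounded by $C|\eta|h$ plus a Brownian term. We combine this with a Gronwall-type absorption of $\mathbb E\sup|X^i|^4$ for small $h$ (for large $h$ the bound is trivial, since a constant works). This yields $\mathbb E\sup|X^i|^4\le Ch|\eta|^4$. The constant depends only on the coefficient bounds and $\lambda(Z)$, not on $a,b,h,i$.

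Next we handle the $\widehat S$ term in \eqref{summed_error}. Use Cauchy--Schwarz in $s$ and $\omega$:
\[
\mathbb E\int_{t_i}^{\rho_i}|\widehat S||X^i_-||\eta|\,\mathrm ds\le |\eta|\Big(\mathbb E\int_{t_i}^{\rho_i}|\widehat S|^2\mathrm ds\Big)^{1/2}\Big(\mathbb E\,h\sup|X^i|^2\Big)^{1/2}.
\]
By the second-moment bound the last factor is at most $Ch|\eta|$. Summing over $i$ and applying discrete Cauchy--Schwarz with $M\le (b-a)/h+1$ gives the bound
\[
C|\eta|^2 h\sqrt{M}\,\Big(\mathbb E\int_0^{\tau_k}|\widehat S|^2\Big)^{1/2}\le C|\eta|^2\sqrt{h}\,(1+k)^{1/2}.
\]
Here we used \eqref{RS_local_bound} and the fact that $\rho_i\le\tau_k$ and the intervals are disjoint. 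This bound tends to $0$ as $h\downarrow 0$.

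The $\mathscr N_P$ term is the main obstacle. The measure $|\mathrm d\mathscr N_P|$ is only known to have total mass at most $k$ on $(0,\tau_k]$, and it may have atoms or be singular. So the factor $h$ cannot come from the length of the interval; it must come from the smallness of $X^i$. Bound the $i$-th term by $\mathbb E[\sup_{[t_i,\rho_i]}|X^i|^2\,\mu_i]$, where $\mu_i:=|\mathscr N_P|((t_i,\rho_i])$ satisfies $\mu_i\le k$ and $\sum_i\mu_i\le k$. We would first try Cauchy--Schwarz with the fourth moment:
\[
\mathbb E[\sup|X^i|^2\mu_i]\le (Ch|\eta|^4)^{1/2}(\mathbb E\mu_i^2)^{1/2}\le C\sqrt{h}|\eta|^2\sqrt{k}\,(\mathbb E\mu_i)^{1/2}.
\]
Summing over $M\sim 1/h$ intervals gives $C\sqrt{h}\sqrt{M}\sqrt{\sum\mathbb E\mu_i}=O(1)$, which does not suffice. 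The jump-related $O(h)$ rather than $O(h^2)$ fourth moment is exactly what fails here. We therefore refine using the decomposition. For the continuous part, $\mathbb E\sup|X^i_c|^4\le Ch^2|\eta|^4$, with the jump-induced terms entering only through $h\,\mathbb E\sup|X^i|^4=O(h^2)$. Cauchy--Schwarz then gives $\sum_i Ch|\eta|^2\sqrt{k\,\mathbb E\mu_i}\le Ch\sqrt{M}\sqrt{k^2}\to0$. For the jump part, $X^i_d$ vanishes unless a jump occurs in $(t_i,\rho_i]$; moreover $|\mathrm d\mathscr N_P|$ is predictable, so we can weight it against the predictable process $|X^i_-|^2$. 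The predictable-time character lets us replace the indicator of a jump before $s$ by its compensator, which gives a factor $\lambda(Z)(s-t_i)\le Ch$ and leads to the bound $\sum_i\mathbb E[Ch|\eta|^2\mu_i]\le Chk|\eta|^2\to0$. Concretely, we condition on whether $N$ charges $(t_i,s)$, and use that $X^i_d(s-)\ne0$ only on that event. Combining the two parts proves \eqref{summed_error}.
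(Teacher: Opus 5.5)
\paragraph{Verdict.} Your treatment of the $\widehat S$-term is correct, and so is your treatment of the continuous piece $X^i_c$. The latter matches how the paper handles its $Y^i$: a fourth moment of order $O(h^2)$, then Cauchy--Schwarz with $\sum_i(\mathbb E\mu_i^2)^{1/2}\le k\sqrt M$. The genuine gap is in the jump piece of the $\mathscr N_P$-term.

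\paragraph{Where the jump-piece estimate fails.} The compensator identity $\mathbb E\int H\,\mathrm dN=\mathbb E\int H\,\lambda(\mathrm dz)\mathrm ds$ needs a predictable integrand integrated against $N$. In your term, however, the jump indicator multiplies the random measure $|\mathrm d\mathscr N_P|$. Suppose you bound $\mathbf 1_{\{N((t_i,s)\times Z)\ge1\}}\le N((t_i,s)\times Z)$ and apply Fubini. Then the integrand against $N(\mathrm dr,\mathrm dz)$ becomes $r\mapsto\int_{(r,\rho_i]}g(s)\,|\mathrm d\mathscr N_P|(s)$, which depends on the path after $r$. Predictability of $\mathscr N_P$ only prevents it from charging the totally inaccessible times $T_j$ themselves. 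It does not prevent mass immediately after them.

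In fact the claimed bound $Ch|\eta|^2\,\mathbb E\mu_i$ is false. At this stage you only know that $\mathscr N_P$ is predictable with $|\mathscr N_P|((0,\tau_k])\le k$, and this allows, for example, $\mathrm d\mathscr N_P(s)=\frac{c}{\epsilon}\sum_j\mathbf 1_{(T_j,T_j+\epsilon]}(s)\,\mathrm ds$. Take $F$ constant with $F\eta\neq0$ and $\epsilon\ll h$. Then the $i$-th jump contribution is of order $c\,\lambda(Z)h\,|F\eta|^2$. Meanwhile $h\,\mathbb E\mu_i$ is of order $c\,\lambda(Z)h^2$, and the sum over $i$ stays of order one. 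So no rate is available; the convergence is only qualitative.

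\paragraph{The missing idea.} The missing step is the paper's: split according to the number $\mathsf N_i$ of Poisson points in $(t_i,t_{i+1}]$.
\begin{itemize}
\item If $\mathsf N_i=0$, there is no jump piece.
\item If $\mathsf N_i=1$ with jump time $T_j$, the contribution is at most $C|\eta|^2\beta_j(h)$, where $\beta_j(h):=\mathbf 1_{\{T_j<\tau_k\}}|\mathscr N_P|((T_j,(T_j+h)\wedge\tau_k])$. Here $\beta_j(h)\downarrow0$ because $|\mathscr N_P|$ is a finite measure and the windows shrink to $\varnothing$. There are finitely many $T_j$ and $\sum_j\beta_j(h)\le k\mathsf N_T\in L^1$, so dominated convergence gives the limit $0$.
\item If $\mathsf N_i\ge2$, use $\mu_i\le k$ together with $\mathbb E[\mathsf N_i^2\mathbf 1_{\{\mathsf N_i\ge2\}}]=O(h^2)$, which sums to $O(h)$.
\end{itemize}

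\paragraph{A cleaner decomposition.} These bounds are only clean if you put just the $F\eta$-jumps into the jump piece, $J^i(s)=\int_{t_i}^s\int_ZF\eta\,\mathbf 1_{(t_i,\rho_i]}(r)\,N(\mathrm dr,\mathrm dz)$. Then $|J^i|\le C|\eta|\mathsf N_i$ deterministically given $\mathsf N_i$. The $EX^i_-$-jumps stay compensated in the remainder, whose fourth moment is still $O(h^2)$. With your $X^i_d$, you would have to carry the random factor $|E X^i(T_j-)|$ and its compounding over several jumps.

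\paragraph{The fourth-moment bound.} Your argument for $\mathbb E\sup|X^i|^4\le Ch|\eta|^4$ does not close as written. Bounding $|X^i_d|$ by $\sup|X^i|\cdot N$ leads to $\mathbb E[\sup|X^i|^4N^4]$, which cannot be absorbed by Gronwall. Instead, apply Kunita's inequality to the compensated integral:
\[
\mathbb E\sup\Bigl|\int\!\!\int\delta\,\widetilde N\Bigr|^4\le C\,\mathbb E\Bigl[\Bigl(\int\!\!\int|\delta|^2\lambda\,\mathrm ds\Bigr)^2+\int\!\!\int|\delta|^4\lambda\,\mathrm ds\Bigr].
\]
This gives $Ch|\eta|^4+C\int\mathbb E\sup|X^i|^4$, and Gronwall then closes the bound.
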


\begin{proof}
\emph{Step 1: moment estimates.}
The second-moment bound in \eqref{shortinterval_cond2} is standard;
see, e.g., Zhang, Dong, and
Meng~\cite[Lemma~2.4]{zhang2020backward}.
For the fourth moment, set
$\delta^i(s,z):=E(s,z)X^i(s-)+F(s,z)\eta$. By the Burkholder--Davis--Gundy
inequality, we have
\[
  \mathbb E\bigg[\sup_{t\in[t_i,\rho_i]}
  \Big|\int_{t_i}^{t}\!\int_Z\delta^i(s,z)\,\widetilde N(\dd s,\dd z)\Big|^4\bigg]
  \le C\,\mathbb E\bigg[
  \Big(\int_{t_i}^{\rho_i}\!\int_Z|\delta^i(s,z)|^2\lambda(\dd z)\,\dd s\Big)^2
  +\int_{t_i}^{\rho_i}\!\int_Z|\delta^i(s,z)|^4\lambda(\dd z)\,\dd s
  \bigg].
\]
Note the terms containing $F\eta$ are bounded by $Ch|\eta|^4$, while the terms
containing $EX^i_-$ are bounded by
\(
  C\int_{t_i}^{\rho_i}
  \mathbb E\Big[\sup_{r\in[t_i,s]}|X^i(r)|^4\Big]\dd s .
\)
The drift and Brownian terms are estimated similarly, contributing only
state-dependent integral terms of the same form and an additional term bounded
by $Ch^2|\eta|^4$. Hence
\[
  \mathbb E\Big[\sup_{r\in[t_i,s]}|X^i(r)|^4\Big]
  \le Ch|\eta|^4
  +C\int_{t_i}^{s}
  \mathbb E\Big[\sup_{r\in[t_i,q]}|X^i(r)|^4\Big]\dd q,
  \qquad s\in[t_i,\rho_i].
\]
It follows from Gronwall's inequality that the fourth-moment bound in
\eqref{shortinterval_cond2} holds.

\emph{Step 2: the $\widehat S$-terms.}
Set $a_i:=\mathbb E\int_{t_i}^{\rho_i}|\widehat S(s)|^2\,\mathrm ds$. By
\eqref{RS_local_bound}, $\sum_{i=0}^{M-1}a_i\le C(1+k)$. The Cauchy--Schwarz inequality
and \eqref{shortinterval_cond2} give
\[
  \mathbb E\bigg[\int_{t_i}^{\rho_i}
  \big|\langle\widehat S(s)X^i(s-),\eta\rangle\big|\,\mathrm ds\bigg]
  \le|\eta|\,a_i^{1/2}
  \Big(\mathbb E\Big[\int_{t_i}^{\rho_i}|X^i(s-)|^2\,\mathrm ds\Big]\Big)^{1/2}
  \le C|\eta|^2h\,a_i^{1/2}.
\]
Summing over $i$ and applying the Cauchy--Schwarz inequality, we have (noting $Mh\le T+1$)
\[
  \sum_{i=0}^{M-1}
  \mathbb E\bigg[\int_{t_i}^{\rho_i}
  \big|\langle\widehat S(s)X^i(s-),\eta\rangle\big|\,\mathrm ds\bigg]
  \le C|\eta|^2h\sqrt{M}\Big(\sum_{i=0}^{M-1}a_i\Big)^{1/2}
  \le C(1+k)^{1/2}|\eta|^2\sqrt{h}\;\rightarrow\;0.
\]
% where we used $Mh\le b-a+h\le T+1$.

\emph{Step 3: the $\mathscr N_P$-terms.}
The fourth-moment estimate for $X^i$ is only of order $h$ (rather than $h^2$),
because a single Poisson jump of size $|F\eta|$ occurs with probability
$O(h)$.  Such $O(h)$ jump contributions also arise in
spike variation estimates for stochastic control systems with jumps;
see~\cite{song2020maximum,zheng2023global}.
To overcome this, we decompose $X^i=J^i+Y^i$, where
\[
  J^i(s):=\int_{t_i}^s\!\int_ZF(r,z)\eta\,\mathbf 1_{(t_i,\rho_i]}(r)\,
  N(\mathrm dr,\mathrm dz).
\]
Then $Y^i:=X^i-J^i$ on $[t_i,\rho_i]$ satisfies
\begin{equation}\label{Yi_equation}
\begin{aligned}
  \mathrm dY^i(s)
  =&\;\Big[A(s)X^i(s-)+B(s)\eta
    -\int_ZF(s,z)\eta\,\lambda(\mathrm dz)\Big]\mathrm ds \\
  &\;+\big[C(s)X^i(s-)+D(s)\eta\big]\mathrm dW(s)
  +\int_ZE(s,z)X^i(s-)\,\widetilde N(\mathrm ds,\mathrm dz),
  \quad Y^i(t_i)=0.
\end{aligned}
\end{equation}
Applying the Burkholder--Davis--Gundy
inequality to \eqref{Yi_equation} and using
\eqref{shortinterval_cond2} for \(X^i\), we obtain
% Using the estimates in Step~1 and \eqref{shortinterval_cond2}, we obtain
\begin{equation}\label{Yi_fourth_estimate}
  \mathbb E\Big[\sup_{r\in[t_i,\rho_i]}|Y^i(r)|^4\Big]\le Ch^2|\eta|^4 .
\end{equation}
Let $v_i:=|\mathscr N_P|((t_i,\rho_i])$. By
\eqref{localization_sequence}, $\sum_{i=0}^{M-1} v_i\le k$ and $v_i\le k$ a.s. Hence
\[
  \sum_{i=0}^{M-1}(\mathbb E v_i^2)^{1/2}
  \le \sqrt M\bigg(\sum_{i=0}^{M-1}\mathbb E v_i^2\bigg)^{1/2}
  \le \sqrt M\bigg(
  \mathbb E\Big[\max_{0\le i\le M-1} v_i\sum_{i=0}^{M-1} v_i\Big]\bigg)^{1/2}
  \le k\sqrt M .
\]
Therefore, using the estimate \eqref{Yi_fourth_estimate} for \(Y^i\),
\begin{equation}\label{Ypart_bound}
\begin{aligned}
 \sum_{i=0}^{M-1}\mathbb E\bigg[\int_{(t_i,\rho_i]}|Y^i(s-)|^2
 |\mathrm d\mathscr N_P|(s)\bigg]
 &\le
 C h|\eta|^2\sum_{i=0}^{M-1}(\mathbb E v_i^2)^{1/2}     
 \le Ck|\eta|^2h\sqrt M
 \le Ck|\eta|^2\sqrt h\to0 .
\end{aligned}
\end{equation}

It remains to handle \(J^i\). Let
\(\mathsf{N}_i=N((t_i,t_{i+1}]\times Z)\), and set
\(
  \mathsf{N}_T:=N((0,T]\times Z).
\)
Denote the Poisson jump times on \((0,T]\) by
\(T_1,\ldots,T_{\mathsf{N}_T}\).
We estimate the contribution of $J^i$ by conditioning on
$\mathsf{N}_i$:
(i) On \(\{\mathsf{N}_i=0\}\), \(J^i=0\). 
(ii) On \(\{\mathsf{N}_i=1\}\), denoting the unique jump time by
\(T_{j(i)}\), we have \(J^i(s-)=0\) for \(s\le T_{j(i)}\) and
\(|J^i(s-)|\le C|\eta|\) afterwards. Set
\(
  \beta_j(h):=\mathbf 1_{\{T_j<\tau_k\}}\,
  |\mathscr N_P|((T_j,(T_j+h)\wedge\tau_k]).
\)
Then
\[
  \int_{(t_i,\rho_i]}|J^i(s-)|^2|\mathrm d\mathscr N_P|(s)
  \le C|\eta|^2\beta_{j(i)}(h)
  \quad\text{on }\{\mathsf{N}_i=1\}.
\]
(iii) On \(\{\mathsf{N}_i\ge2\}\), \(|J^i_-|\le C|\eta|\mathsf{N}_i\) and \(v_i\le k\). Therefore, it follows from (i)-(iii) that
\begin{equation}\label{Jpart_bound}
  \sum_{i=0}^{M-1}\mathbb E\bigg[\int_{(t_i,\rho_i]}|J^i(s-)|^2\,|\mathrm d\mathscr N_P|(s)\bigg]
  \le C|\eta|^2\,\mathbb E\bigg[\sum_{j:T_j\le T}\beta_j(h)\bigg]
  +C(1+k)|\eta|^2\sum_{i=0}^{M-1}\mathbb E\big[\mathsf{N}_i^2\mathbf 1_{\{\mathsf{N}_i\ge2\}}\big]:=\mathrm{I}+\mathrm{II}.
\end{equation}
For the first term, \(\beta_j(h)\downarrow0\) a.s., because
\((T_j,(T_j+h)\wedge\tau_k]\downarrow\varnothing\). Moreover,
\(
  \sum_{j:T_j\le T}\beta_j(h)\le k\mathsf{N}_T\in L^1(\Omega).
\)
Hence the term \((\mathrm{I})\to0\) by the dominated convergence
theorem. For the second term,
\(\mathsf N_i\) is Poisson with mean
\(\mu_i=\lambda(Z)(t_{i+1}-t_i)\le\lambda(Z)h\), and
\[
  \mathbb E\big[\mathsf N_i^2\mathbf 1_{\{\mathsf N_i\ge2\}}\big]
  =\mu_i^2+\mu_i(1-e^{-\mu_i})
  \le Ch^2 .
\]
Thus
\(
  \sum_{i=0}^{M-1}
  \mathbb E\big[\mathsf N_i^2\mathbf 1_{\{\mathsf N_i\ge2\}}\big]
  \le CMh^2\le C(T+1)h,
\)
and the term \((\mathrm{II})\to0\). Combining \eqref{Ypart_bound}, \eqref{Jpart_bound}
and
\(
  |X^i_-|^2\le2|Y^i_-|^2+2|J^i_-|^2
\)
proves the \(\mathscr N_P\)-term in \eqref{summed_error}.
\end{proof}

% [Uniform positive definiteness of $\widehat R$]
\begin{lem}\label{lem:Rhat_unifpos}
Let \ref{SLQH1}--\ref{SLQH2} and \eqref{N_0uniformalconvex} hold. 
Then the process \(\widehat R\) associated with the semimartingale \(P\) satisfies
\begin{equation}\label{Rhat_unifpos_statement}
  \widehat R(s)\ge\varepsilon I_m,
  \qquad
  \text{for a.e. } s\in[0,T],\ \text{a.s.}
\end{equation}
where $\varepsilon$ is the constant in \eqref{N_0uniformalconvex}. 
% In particular, \eqref{Rhatposdef} holds with $\delta=\varepsilon$.
\end{lem}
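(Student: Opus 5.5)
The plan is to test the stopped uniform convexity of \proref{prop:stoppedunifconv} against the spike controls of \lemref{lem:localized_error}. We then sum over the partition, so that the $\widehat S$ and $\mathscr N_P$ error terms vanish as $h\downarrow0$. This leaves an inequality for the integral of $\langle\widehat R\eta,\eta\rangle$ over $(a,b\wedge\tau_k]$.

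Fix $k$, $\eta\in\mathbb R^m$, $0\le a<b\le T$, $h\in(0,1)$, and the partition \eqref{partition_def}. For each $i$, apply \proref{prop:stoppedunifconv} with $\sigma=t_i$ and $\tau=\rho_i$ to the control $u=\eta\mathbf 1_{(t_i,\rho_i]}$ with zero initial state:
\[
J^{\rho_i}(t_i,0;u)\ge\varepsilon|\eta|^2\,\mathbb E(\rho_i-t_i).
\]
Since $u$ is bounded and $\rho_i\le\tau_k$, \lemref{lem:jumpito} applies with $\sigma=t_i$ and $\theta=\rho_i$. Because $X^i(t_i)=0$, the left side of \eqref{jumpitoid_stopped} is exactly $J^{\rho_i}(t_i,0;u)$. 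Hence
\[
\varepsilon|\eta|^2\,\mathbb E(\rho_i-t_i)\le \mathbb E\int_{t_i}^{\rho_i}\langle\widehat R\eta,\eta\rangle\,\mathrm ds+2\,\mathbb E\int_{t_i}^{\rho_i}\langle\widehat S X^i_-,\eta\rangle\,\mathrm ds+\mathbb E\int_{(t_i,\rho_i]}X^i_-{}^\top\mathrm d\mathscr N_PX^i_- .
\]
We sum over $i=0,\dots,M-1$. The intervals $(t_i,\rho_i]$ tile $(a,b\wedge\tau_k]$ on $\{a<\tau_k\}$ and are empty otherwise. By \eqref{summed_error}, the last two sums tend to zero as $h\downarrow0$. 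This yields
\[
\mathbb E\int_{a}^{b}\mathbf 1_{\{s\le\tau_k\}}\big(\langle\widehat R(s)\eta,\eta\rangle-\varepsilon|\eta|^2\big)\mathrm ds\ge0 .
\]
The integrand is integrable by \eqref{RS_local_bound}.

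The main obstacle is passing from these deterministic intervals, with expectation over all of $\Omega$, to a pointwise statement on $[0,T]\times\Omega$. To localize in $\omega$, I would repeat the argument with the initial time restricted to a set $A\in\mathcal F_a$, i.e.\ with control $\mathbf 1_A\eta\mathbf 1_{(t_i,\rho_i]}$. By the conditional form of \proref{prop:stoppedunifconv} (via \lemref{lem:H3propagation}), each step can be multiplied by $\mathbf 1_A$, since $A\in\mathcal F_{t_i}$. The estimates of \lemref{lem:localized_error} persist with $\mathbf 1_A$ inserted, because they only shrink the quantities involved. We obtain
\[
\mathbb E\Big[\mathbf 1_A\int_a^b\mathbf 1_{\{s\le\tau_k\}}\langle(\widehat R-\varepsilon I)\eta,\eta\rangle\,\mathrm ds\Big]\ge0
\]
for all $a<b$ and $A\in\mathcal F_a$.

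The sets $(a,b]\times A$ with $A\in\mathcal F_a$ generate the predictable $\sigma$-field, and the process $\mathbf 1_{\{s\le\tau_k\}}\langle(\widehat R-\varepsilon I)\eta,\eta\rangle$ is predictable and $\mathrm ds\otimes\mathbb P$-integrable. A monotone class argument then gives that this predictable process is nonnegative $\mathrm ds\otimes\mathbb P$-a.e. Alternatively, take finite disjoint unions of such rectangles and approximate the predictable set where the integrand is negative.

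Finally, let $\eta$ range over a countable dense subset of $\mathbb R^m$ and let $k\to\infty$, using $\mathbb P(\tau_k=T)\to1$. This gives $\widehat R(s)\ge\varepsilon I_m$ for a.e.\ $s$, a.s.
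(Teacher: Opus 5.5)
Your proof is correct. Its skeleton is the paper's: spike controls $\eta\mathbf 1_A\mathbf 1_{(t_i,\rho_i]}$ with $A\in\mathcal F_a$ on the partition \eqref{partition_def}, \lemref{lem:jumpito} combined with \proref{prop:stoppedunifconv}, summation with \lemref{lem:localized_error}, and finally $\eta\in\mathbb Q^m$ and $k\to\infty$.

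The genuine difference is how you pass from
\[
  \mathbb E\Big[\mathbf 1_A\int_a^b\phi_{k,\eta}(s)\,\mathrm ds\Big]\ge0
  \quad(0\le a<b\le T,\ A\in\mathcal F_a),
  \qquad
  \phi_{k,\eta}(s):=\mathbf 1_{\{s\le\tau_k\}}\big\langle(\widehat R(s)-\varepsilon I_m)\eta,\eta\big\rangle,
\]
to $\phi_{k,\eta}\ge0$ a.e.
\begin{itemize}
\item The paper reads this inequality as the submartingale property of the bounded continuous finite-variation process $\int_0^\cdot\phi_{k,\eta}\,\mathrm ds$. It takes the Doob--Meyer decomposition and discards the martingale part, because a predictable finite-variation martingale is constant.
\item You instead extend the inequality from predictable rectangles to the whole predictable $\sigma$-field and test it on $\{\phi_{k,\eta}<0\}$.
\end{itemize}
Your route is more elementary, since it needs no Doob--Meyer. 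The price is that it uses predictability of $\phi_{k,\eta}$. This does hold, because $\widehat R$ is built from $P_-$, $\zeta$ and predictable coefficients. The paper's route only needs the integral process to be continuous, adapted and bounded. It is also the same device the paper reuses in \lemref{lem:SREidentify}, for the possibly singular measure $\mathscr M$.

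One point needs more precision in your version. Nonnegativity on the $\pi$-system of rectangles does not extend by a $\pi$--$\lambda$ argument alone, because $\{\Gamma:\mu(\Gamma)\ge0\}$ is not closed under proper differences. The clean way is the alternative you mention:
\begin{itemize}
\item pass to the algebra of finite disjoint unions of rectangles $(a,b]\times A$, where nonnegativity follows by additivity;
\item apply the monotone class theorem to $\{\Gamma:\mu_+(\Gamma)\ge\mu_-(\Gamma)\}$, where $\mu_\pm(\Gamma):=\mathbb E\int_0^T\mathbf 1_\Gamma\,\phi_{k,\eta}^\pm\,\mathrm ds$.
\end{itemize}
This class is a monotone class because $\mu_\pm$ are finite measures.
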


\begin{proof}
Fix \(k\ge1\), \(\eta\in\mathbb R^m\), \(0\le a<b\le T\), and
\(A_0\in\mathcal F_a\).

% localized spike controls.
\emph{Step 1:}
For the partition \eqref{partition_def}, define the spike controls
\[
  u_i(s):=\eta\,\mathbf 1_{A_0}\mathbf 1_{(t_i,\rho_i]}(s),
  \qquad i=0,\ldots,M-1 .
\]
Since \(A_0\in\mathcal F_a\subset\mathcal F_{t_i}\), the state driven by
\(u_i\) from \(0\) at \(t_i\) is \(\mathbf 1_{A_0}X^i\). 
If \(\rho_i=t_i\), the desired inequality below is trivial. We therefore
assume \(\rho_i>t_i\), then \(\rho_i\le\tau_k\).  
Applying \lemref{lem:jumpito} on
\([t_i,\rho_i]\), and using \proref{prop:stoppedunifconv}, gives
\[
\begin{aligned}
J^{\rho_i}(t_i,0;u_i)=&\mathbb E\bigg[\int_{t_i}^{\rho_i}\mathbf 1_{A_0}
  \big(\langle\widehat R(s)\eta,\eta\rangle
  +2\langle\widehat S(s)X^i(s-),\eta\rangle\big)\,\mathrm ds\bigg]  
+\mathbb E\bigg[\mathbf 1_{A_0}\!\int_{(t_i,\rho_i]}
  \langle X^i(s-),\mathrm d\mathscr N_P(s)X^i(s-)\rangle\bigg]
\\
\ge&
\varepsilon|\eta|^2\,
\mathbb E\big[\mathbf 1_{A_0}(\rho_i-t_i)\big].
\end{aligned}
\]
Summing over \(i=0,\ldots,M-1\), using the disjoint union
\(
  \bigcup_{i=0}^{M-1}(t_i,\rho_i]=(a,b\wedge\tau_k],
\)
and noting that the two error terms vanish as \(h\downarrow0\) by
\lemref{lem:localized_error}, we obtain
\begin{equation}\label{rectangle_Rhat}
  \mathbb E\bigg[\mathbf 1_{A_0}\int_a^b\phi_{k,\eta}(s)\,\mathrm ds\bigg]\ge0,
  \qquad
  \phi_{k,\eta}(s):=\mathbf 1_{\{s\le\tau_k\}}
  \big(\langle\widehat R(s)\eta,\eta\rangle-\varepsilon|\eta|^2\big),
\end{equation}
The inequality \eqref{rectangle_Rhat} holds for every
\(0\le a<b\le T\) and every \(A_0\in\mathcal F_a\).

% finite-variation submartingale argument.
\emph{Step 2:}
By \eqref{RS_local_bound},
\(
  \int_0^T|\phi_{k,\eta}(s)|\,\mathrm ds
  \le C(1+k)(1+|\eta|^2),\text{a.s.}
\)
Hence
\(
  M_{k,\eta}(t):=\int_0^t\phi_{k,\eta}(s)\,\mathrm ds
\)
is a bounded continuous adapted process with paths of finite variation on $[0,T]$. Moreover,
\eqref{rectangle_Rhat} is exactly
\[
  \mathbb E\big[
    \mathbf 1_{A_0}\big(M_{k,\eta}(b)-M_{k,\eta}(a)\big)
  \big]\ge0,
  \qquad A_0\in\mathcal F_a,\quad 0\le a<b\le T .
\]
Thus \(M_{k,\eta}\) is a submartingale. Let \(M_{k,\eta}=N+\mathscr A\) be its Doob--Meyer decomposition, where \(N\) is a martingale and \(\mathscr A\) is predictable and nondecreasing, with
\(N(0)=\mathscr A(0)=0\). Since \(M_{k,\eta}\) is continuous and adapted, it is
predictable. Therefore
\(
  N=M_{k,\eta}-\mathscr A
\)
is a predictable martingale of finite variation.
Hence, by Jacod and
Shiryaev~\cite[Corollary~I.3.16]{jacod2003limit}, \(N\equiv0\).
Therefore, \(M_{k,\eta}=\mathscr A\) is nondecreasing, and
\[
  \phi_{k,\eta}(s)\ge0
  \quad\text{for a.e. }s\in[0,T],\quad\text{a.s.}
\]

\emph{Step 3:}
Taking the countable union of the exceptional sets in Step~2, we obtain that,
for every \((k,\eta)\in\mathbb N\times\mathbb Q^m\),
\[
  \mathbf 1_{\{s\le\tau_k\}}
  \big(\langle\widehat R(s)\eta,\eta\rangle
  -\varepsilon|\eta|^2\big)\ge0,
  \qquad
 \text{for a.e. }s\in[0,T],\ \text{a.s.}
\]
Since
\(\tau_k\uparrow T\) a.s.,
\(\mathbf 1_{\{s\le\tau_k\}}\uparrow1\) for a.e. $s\in[0,T]$, a.s.
Letting
\(k\to\infty\), we obtain
\[
  \langle\widehat R(s)\eta,\eta\rangle
  \ge\varepsilon|\eta|^2,
  \qquad
  \text{for } \eta\in\mathbb Q^m,\ \text{a.e. } s\in[0,T],\ \text{a.s.}
\]
By continuity of the quadratic form in \(\eta\), the inequality extends to all
\(\eta\in\mathbb R^m\). This proves \eqref{Rhat_unifpos_statement}.
\end{proof}

\subsection{Identification of the drift}\label{sec:driftid}

By \lemref{lem:Rhat_unifpos}, the feedback coefficient
$\Theta:=-\widehat R^{-1}\widehat S$ is well defined, and
$|\Theta|\le\varepsilon^{-1}|\widehat S|$, so \eqref{RS_local_bound} gives
\begin{equation}\label{Theta_local_bound}
  \int_0^{\tau_k}|\Theta(s)|^2\,\mathrm ds\le C(1+k),
  \qquad\text{a.s.},\quad k\ge1 .
\end{equation}
Similarly, since
$|\widehat S^\top\widehat R^{-1}\widehat S|\le\varepsilon^{-1}|\widehat S|^2$,
the definitions \eqref{rawresidualmeasure}--\eqref{completedresidual} and
\eqref{localization_sequence} yield
\begin{equation}\label{RP_local}
  |\mathscr R_P|\big((0,\tau_k]\big)
  \le|\mathscr N_P|\big((0,\tau_k]\big)
  +\varepsilon^{-1}\!\int_0^{\tau_k}|\widehat S(s)|^2\,\mathrm ds
  \le C(1+k),
  \qquad\text{a.s.}
\end{equation}

In view of the uniform positivity of \(\widehat R\) obtained in
\lemref{lem:Rhat_unifpos}, we apply completion of squares to the stopped
relation \eqref{jumpitoid_stopped} and obtain the following corollary.
The additional integrability condition \eqref{jumpito_integrability} in
\lemref{lem:jumpito} is removed below by a truncation technique together with
the convergence theorems.

% The truncation method below removes the additional integrability
% condition \eqref{jumpito_integrability} in \lemref{lem:jumpito}.

% [Completed-square identity]
\begin{coro}\label{cor:complsq_identity}
Let \ref{SLQH1}--\ref{SLQH2} and \eqref{N_0uniformalconvex} hold, and let
\(\mathscr R_P\) be defined by \eqref{completedresidual}.
\begin{enumerate}[label=(\roman*)]
\item
Let \(\sigma,\theta\in\mathcal T[0,T]\) satisfy
\(\sigma\le\theta\le\tau_k\) for some \(k\ge1\). For
\(\xi\in L^2_{\mathcal F_\sigma}(\Omega;\mathbb R^n)\) and
\(u\in\mathcal U[\sigma,\theta]\), let \(X\) be the corresponding state with
\(X(\sigma)=\xi\). Then
\begin{equation}\label{complsq_local}
\begin{aligned}
  &\mathbb E\langle P(\theta)X(\theta),X(\theta)\rangle
  -\mathbb E\langle P(\sigma)\xi,\xi\rangle
  +\mathbb E\bigg[\int_\sigma^\theta\ell(s,X(s),u(s))\,\mathrm ds\bigg]  \\
  =&\;
  \mathbb E\bigg[\int_\sigma^\theta
  \big|\widehat R(s)^{1/2}\big(u(s)-\Theta(s)X(s-)\big)\big|^2\,\mathrm ds\bigg]  
  +\mathbb E\bigg[\int_{(\sigma,\theta]}
  \big\langle X(s-),\mathrm d\mathscr R_P(s)X(s-)\big\rangle\bigg] .
\end{aligned}
\end{equation}
% In particular, every term in \eqref{complsq_local} is finite.
\item
If, in addition, \(\mathscr R_P\equiv0\), then for every
\((t,\xi)\in\mathcal D\) and every \(u\in\mathcal U[t,T]\), with
\(X=X^{t,\xi;u}\),
\begin{equation}\label{complsq_global}
  J(t,\xi;u)
  =
  \mathbb E\langle P(t)\xi,\xi\rangle
  +\mathbb E\bigg[\int_t^T
  \big|\widehat R(s)^{1/2}\big(u(s)-\Theta(s)X(s-)\big)\big|^2\,\mathrm ds\bigg] .
\end{equation}
\end{enumerate}
\end{coro}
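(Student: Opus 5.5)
For part (i), I would begin from the It\^o identity \eqref{jumpitoid_stopped} of \lemref{lem:jumpito} and complete the square pointwise. By \lemref{lem:Rhat_unifpos}, $\widehat R\ge\varepsilon I_m$ and $\Theta=-\widehat R^{-1}\widehat S$ is well defined. Pointwise in $(s,\omega)$ we have
\[
\langle\widehat Ru,u\rangle+2\langle\widehat SX_-,u\rangle
=\big|\widehat R^{1/2}(u-\Theta X_-)\big|^2-\langle\widehat S^\top\widehat R^{-1}\widehat SX_-,X_-\rangle .
\]
Moreover, by \eqref{rawresidualmeasure} and \eqref{completedresidual},
\[
X_-^\top\mathrm d\mathscr N_PX_-=X_-^\top\mathrm d\mathscr R_PX_-+\langle\widehat S^\top\widehat R^{-1}\widehat SX_-,X_-\rangle\,\mathrm ds .
\]
Substituting both relations into \eqref{jumpitoid_stopped}, the $\widehat S^\top\widehat R^{-1}\widehat S$ terms cancel and \eqref{complsq_local} follows, provided \eqref{jumpito_integrability} holds. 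The term $\mathbb E\int|X_-|^2\,\mathrm d|\mathscr R_P|$ is finite by \eqref{RP_local} and $X\in S^2_{\mathbb F}$.

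To remove \eqref{jumpito_integrability} for a general $u\in\mathcal U[\sigma,\theta]$, I would truncate. Set
\[
\nu_N:=\inf\Big\{s\ge\sigma:\int_\sigma^s|u(r)|^2\,\mathrm dr\ge N\ \text{or}\ |X(s)|\ge N\Big\}\wedge\theta ,
\]
or alternatively use $u_N:=u\mathbf 1_{\{|u|\le N\}}$. I would apply the bounded-control version on $(\sigma,\theta]$, which is legitimate by the last clause of \lemref{lem:jumpito}, and then pass to the limit. The left-hand side converges by $L^2$ continuity of the state in the control (\lemref{existunique}) and the boundedness of $P$. On the right-hand side, the $\mathscr R_P$ term converges by dominated convergence, using the pathwise bound $|\mathscr R_P|((0,\tau_k])\le C(1+k)$ together with $\sup|X_N-X|\to0$ in $L^2$. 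Since the square term then equals a convergent quantity, it is bounded along the sequence. Passing to the limit in it requires Fatou's lemma for one inequality. For equality, I would write the square term as LHS minus the $\mathscr R_P$ term for each $N$. The integrand $|\widehat R^{1/2}(u_N-\Theta X_{N,-})|^2$ converges a.e. Uniform integrability then follows from the exact identity: the integrals converge because both other sides converge. This step, handling unbounded $\widehat R$ and $\Theta$ (which are only locally square-integrable, not bounded), is where I expect the main care is needed. Fatou gives ``$\le$'', and the reverse inequality can be obtained by running the same argument with the roles of the two sides exchanged. Alternatively, one can use the convergence $\int|\widehat R^{1/2}(u_N-u)|^2\to0$, which holds on the localized set where $\int_0^{\tau_k}|\widehat R|\,\mathrm ds\le C(1+k)$ only if $u$ is bounded. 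I would therefore prefer the Fatou-plus-identity route.

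For part (ii), assume $\mathscr R_P\equiv0$, fix $(t,\xi)$ and $u$, and apply (i) with $\sigma=t\wedge\tau_k$ and $\theta=\tau_k$. Note that $\sigma\le\tau_k$ and, on $\{t\le\tau_k\}$, the initial point is $t$. More carefully, I would use $\sigma=t$ and $\theta=\tau_k\vee t$, treating the case $\tau_k<t$ as an empty interval via indicator sets in $\mathcal F_t$. The residual term vanishes. Letting $k\to\infty$, the terminal term $\mathbb E\langle P(\tau_k)X(\tau_k),X(\tau_k)\rangle$ tends to $\mathbb E\langle GX(T),X(T)\rangle$. This uses the c\`adl\`ag property of $P$ and $X$ at $T$ together with $\mathbb P(\tau_k=T)\to1$, domination by $\|P\|_\infty\sup|X|^2$, and $P(T)=G$. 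The running cost converges by dominated convergence, and the square term converges by monotone convergence. This yields \eqref{complsq_global}.
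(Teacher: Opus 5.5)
\textbf{The gap: the limit of the square term in (i).} Your overall structure matches the paper. You complete the square inside \eqref{jumpitoid_stopped}, split $\mathrm d\mathscr N_P=\mathrm d\mathscr R_P+\widehat S^\top\widehat R^{-1}\widehat S\,\mathrm ds$, remove \eqref{jumpito_integrability} by approximation, and let $k\to\infty$ for (ii). Your part (ii) is fine; taking $\theta=\tau_k\vee t$ is a slightly more careful version of the paper's ``we may assume $t\le\tau_k$''.

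The problem is passing to the limit in the square term of (i). With $u_N:=u\mathbf 1_{\{|u|\le N\}}$, you correctly get the identity for $(u_N,X_N)$ and convergence of every term except the square term. Fatou then gives only \(\mathbb E\int_\sigma^\theta|\widehat R^{1/2}(u-\Theta X_-)|^2\,\mathrm ds\le\lim_N\mathbb E\int_\sigma^\theta|\widehat R^{1/2}(u_N-\Theta X_{N,-})|^2\,\mathrm ds\). Your argument for the reverse inequality is circular. Convergence of the integrals to \emph{some} number, together with a.e.\ convergence of the integrands, does not give uniform integrability, since mass can escape to where $\widehat R$ is large. ``Exchanging the roles of the two sides'' adds nothing. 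You also discarded exactly the argument that closes the gap, wrongly believing it needs $u$ bounded.

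\textbf{How to close it.} Write \(\widehat R^{1/2}(u_N-\Theta X_{N,-})=\widehat R^{1/2}u_N+\widehat R^{-1/2}\widehat SX_{N,-}\).
\begin{itemize}
\item The second summand converges in $L^2(\mathrm d\mathbb P\otimes\mathrm ds)$. Indeed, by \eqref{RS_local_bound},
\(\mathbb E\int_\sigma^\theta|\widehat R^{-1/2}\widehat S(X_{N,-}-X_-)|^2\,\mathrm ds\le\varepsilon^{-1}C(1+k)\,\mathbb E\sup|X_N-X|^2\to0\).
It is also bounded uniformly in $N$.
\item For the first summand, $|\widehat R^{1/2}u_N|^2=\langle\widehat Ru,u\rangle\mathbf 1_{\{|u|\le N\}}$ is nondecreasing in $N$ because $\widehat R\ge\varepsilon I_m$.
\item Its expectation is bounded uniformly in $N$. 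This is because the full square term is bounded (it equals a convergent left-hand side minus a convergent $\mathscr R_P$-term) and the second summand is bounded.
\item Monotone convergence then gives $\mathbb E\int_\sigma^\theta\langle\widehat Ru,u\rangle\,\mathrm ds<\infty$.
\item Dominated convergence gives $\mathbb E\int_\sigma^\theta\langle\widehat Ru,u\rangle\mathbf 1_{\{|u|>N\}}\,\mathrm ds\to0$, i.e.\ $\widehat R^{1/2}u_N\to\widehat R^{1/2}u$ in $L^2$, with no boundedness of $u$ required.
\end{itemize}
With this, your control-truncation route is valid.

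\textbf{Comparison with the paper.} The paper avoids comparing different states. It localizes in time with $\alpha_\ell$, the first time $\int_\sigma^r|\widehat R^{1/2}(u-\Theta X_-)|^2\,\mathrm ds$ reaches $\ell$. This makes \eqref{jumpito_integrability} hold on $(\sigma,\alpha_\ell]$ for the original $u$. The square term then passes to the limit by monotone convergence and the remaining terms by dominated convergence.

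Your other option, the stopping time $\nu_N$ triggered by $\int|u|^2$ or $|X|$ reaching $N$, does not verify \eqref{jumpito_integrability}. Only $\int_0^{\tau_k}|\widehat R|\,\mathrm ds\le C(1+k)$ is available at that point, and this does not control $\int\langle\widehat Ru,u\rangle\,\mathrm ds$ in terms of $\int|u|^2\,\mathrm ds$.
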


\begin{proof}
(i) Since \(\widehat R\ge\varepsilon I_m\), applying completion of squares with
\(\Theta=-\widehat R^{-1}\widehat S\) implies that
\begin{equation}\label{complsq_algebra}
  \langle\widehat Ru,u\rangle+2\langle\widehat SX_-,u\rangle
  =
  \big|\widehat R^{1/2}(u-\Theta X_-)\big|^2
  -\big\langle\widehat S^\top\widehat R^{-1}\widehat SX_-,X_-\big\rangle
\end{equation}
For \(\ell\ge1\), define the stopping times
\(
  \alpha_\ell
  :=
  \theta\wedge\inf\big\{r\ge\sigma:
  \int_\sigma^r
  \big|\widehat R(s)^{1/2}\big(u(s)-\Theta(s)X(s-)\big)\big|^2\,\mathrm ds
  \ge\ell\big\}.
\)
On $(\sigma,\alpha_\ell]$, the integrability condition
\eqref{jumpito_integrability} holds. Indeed,
\[
  |\widehat R^{1/2}u|^2
  \le2\big|\widehat R^{1/2}(u-\Theta X_-)\big|^2
  +2\varepsilon^{-1}|\widehat S|^2|X_-|^2,
  \qquad
  |\langle\widehat SX_-,u\rangle|
  \le\tfrac12\varepsilon^{-1}|\widehat S|^2|X_-|^2
  +\tfrac12|\widehat R^{1/2}u|^2,
\]
while the definition of $\alpha_\ell$ and \eqref{RS_local_bound} give
\[
  \mathbb E\bigg[\int_\sigma^{\alpha_\ell}
  \big|\widehat R^{1/2}(u-\Theta X_-)\big|^2\mathrm ds\bigg]\le\ell,
  \qquad
  \mathbb E\bigg[\int_\sigma^{\alpha_\ell}|\widehat S|^2|X_-|^2\,\mathrm ds\bigg]
  \le C(1+k)\,\mathbb E\Big[\sup_{r\in[\sigma,\theta]}|X(r)|^2\Big]<\infty .
\]
Hence \lemref{lem:jumpito} applies on $(\sigma,\alpha_\ell]$. Substituting
\eqref{complsq_algebra} into \eqref{jumpitoid_stopped} 
and using
\(
  \mathrm d\mathscr N_P
  =
  \mathrm d\mathscr R_P
  +\widehat S^\top\widehat R^{-1}\widehat S\,\mathrm ds .
\)
gives
\eqref{complsq_local} with $\alpha_\ell$ in place of $\theta$.
It remains to let \(\ell\to\infty\). In view of \eqref{RP_local}, we have
\[
  \Big|
  \int_{(\sigma,\alpha_\ell]}
  \big\langle X(s-),\mathrm d\mathscr R_P(s)X(s-)\big\rangle
  \Big|
  \le
  |\mathscr R_P|((0,\tau_k])
  \sup_{r\in[\sigma,\theta]}|X(r)|^2
  \le
  C(1+k)\sup_{r\in[\sigma,\theta]}|X(r)|^2 .
\]
Therefore, the \(\mathscr R_P\)-term converges to the corresponding term on
\((\sigma,\theta]\) by the dominated convergence theorem. Similarly, the
term
\(\mathbb E\langle P(\alpha_\ell)X(\alpha_\ell),X(\alpha_\ell)\rangle\)
and the running-cost term
\(\mathbb E\int_\sigma^{\alpha_\ell}\ell(s,X(s),u(s))\,\mathrm ds\)
converge to their counterparts with \(\theta\) in place of \(\alpha_\ell\).
Furthermore, applying the monotone convergence theorem to
\(
  \mathbb E\int_\sigma^{\alpha_\ell}
  \big|\widehat R^{1/2}(u-\Theta X_-)\big|^2\,\mathrm ds
\)
gives \eqref{complsq_local}.

(ii) Fix \((t,\xi)\in\mathcal D\) and \(u\in\mathcal U[t,T]\). Since
\(\tau_k\uparrow T\), we may assume \(t\le\tau_k\). Applying part (i) with
\(\sigma=t\) and \(\theta=\tau_k\), and using \(\mathscr R_P\equiv0\), gives
\[
  \mathbb E\langle P(\tau_k)X(\tau_k),X(\tau_k)\rangle
  -\mathbb E\langle P(t)\xi,\xi\rangle
  +\mathbb E\bigg[\int_t^{\tau_k}\ell(s,X(s),u(s))\,\mathrm ds\bigg]
  =
  \mathbb E\bigg[\int_t^{\tau_k}
  \big|\widehat R(s)^{1/2}(u(s)-\Theta(s) X(s-))\big|^2\,\mathrm ds\bigg] .
\]
Letting \(k\to\infty\), the boundary and running-cost terms converge by
dominated convergence, while the square term converges by monotone
convergence. Since \(P(T)=G\), we obtain \eqref{complsq_global}.
\end{proof}

% \begin{remark}\label{rem:complsq_portable}
% The proof uses only that \(P\) is a bounded c\`adl\`ag special semimartingale
% with locally square-integrable martingale part and that \(\widehat R\ge\delta
% I_m\) for some \(\delta>0\). Part (ii) also uses \(P(T)=G\). Hence the same
% completed-square identities hold for any triple
% \((\bar P,\bar\Lambda,\bar\zeta)\) with these properties, with the corresponding
% objects defined by the same formulas.
% \end{remark}

From \eqref{completedresidual}, it remains to prove
\(\mathscr R_P\equiv0\). This is the purpose of the next lemma.

% Thus, proving that $(P,\Lambda,\zeta)$ solves the SRE-J \eqref{SREJ} amounts
% to proving $\mathscr R_P\equiv0$ in \eqref{complsq_local}.

% [Identification of the drift]

\begin{lem}\label{lem:SREidentify}
Let \ref{SLQH1}--\ref{SLQH2} and \eqref{N_0uniformalconvex} hold. Then
\(\mathscr R_P\equiv0\). Consequently, the predictable finite-variation part
of \(P\) is absolutely continuous and
\begin{equation}\label{AP_identified}
  \mathrm d\mathscr A_P(s)
  =
  \Big(\Gamma^\circ(s)
  -\widehat S(s)^\top\widehat R(s)^{-1}\widehat S(s)\Big)\mathrm ds .
\end{equation}
Moreover, \((P,\Lambda,\zeta)\) satisfies the SRE-J \eqref{SREJ}, with
\((\Lambda,\zeta)\) locally square-integrable.
\end{lem}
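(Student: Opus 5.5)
The plan is to show that the $\mathbb S^n$-valued signed measure $\mathscr R_P$ vanishes. I will show it is both nonnegative and nonpositive in the sense of quadratic forms, localized on $(0,\tau_k]$, using the completed-square identity \eqref{complsq_local} of \corref{cor:complsq_identity}(i). Throughout, fix $k\ge1$, stopping times $\sigma\le\theta\le\tau_k$, a vector $x\in\mathbb R^n$ and a set $A\in\mathcal F_\sigma$, and take $\xi=\mathbf 1_A x$.

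\emph{Step 1: $\mathscr R_P\ge0$.} Choose the zero control on $(\sigma,\theta]$. By \proref{prop:stoppedunifconv} (equivalently the dynamic programming principle \proref{prop:DPPgeneral} with $u=0$), the left-hand side of \eqref{complsq_local} is nonnegative, since $\mathbb E\langle P(\sigma)\xi,\xi\rangle=V^\theta(\sigma,\xi)\le J^\theta(\sigma,\xi;0)$. Hence
\[
  \mathbb E\bigg[\int_{(\sigma,\theta]}\big|\widehat R^{1/2}\Theta X_-\big|^2\mathrm ds
  +\int_{(\sigma,\theta]}\langle X(s-),\mathrm d\mathscr R_P(s)X(s-)\rangle\bigg]\ge0,
\]
where $X=X^{\sigma,\xi,0}$. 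This alone gives nothing, because the square term is positive. So Step 1 will instead be carried out with $u$ equal to the optimal control of the stopped problem, which yields equality $=0$ on the left. That gives
\[
  \mathbb E\int|\widehat R^{1/2}(u^*-\Theta X^*_-)|^2\,\mathrm ds
  =-\mathbb E\int\langle X^*_-,\mathrm d\mathscr R_P X^*_-\rangle .
\]
Combining this with an arbitrary control gives $\mathbb E\int\langle X_-,\mathrm d\mathscr R_P X_-\rangle\ge -\mathbb E\int|\widehat R^{1/2}(u-\Theta X_-)|^2$ for every $u$.

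\emph{Step 2: sign of $\mathscr R_P$ via zero-cost comparison.} The cleaner route is the inequality obtained from optimality: for every admissible $u$ the left-hand side of \eqref{complsq_local} is $\ge0$, with equality at $u^*$.

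First take $u=0$ and $\theta=\sigma+h$ small. The square term is $\mathbb E\int_\sigma^{\theta}|\widehat R^{1/2}\Theta X_-|^2\,\mathrm ds$, which is of order $\int_\sigma^\theta|\widehat S|^2\,\mathrm ds$. Since $X_-\approx x$ on short intervals (cadlag paths, finitely many jumps), a Lebesgue-differentiation argument on rectangles $\mathbf 1_A\mathbf 1_{(a,b]}$, summed over a partition as in \lemref{lem:localized_error}, gives
\[
  \mathbb E\Big[\mathbf 1_A\int_{(a,b\wedge\tau_k]}x^\top\big(\mathrm d\mathscr R_P+\widehat S^\top\widehat R^{-1}\widehat S\,\mathrm ds\big)x\Big]\ge0 .
\]
This is the same as $x^\top\mathrm d\mathscr N_P\,x\ge0$.

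Next, to get the reverse inequality, use a feedback-type control $u=\Theta X_-$ truncated on $\{|\Theta|\le L\}$, which is admissible up to $\tau_k$ by \eqref{Theta_local_bound}. With this control the square term vanishes on $\{|\Theta|\le L\}$. Optimality (left side $\ge0$) then gives $\mathbb E\int\langle X_-,\mathrm d\mathscr R_P X_-\rangle\ge -\mathbb E\int_{\{|\Theta|>L\}}|\widehat R^{1/2}\Theta X_-|^2\,\mathrm ds$. Letting $L\to\infty$ yields $\mathscr R_P\ge0$ in the localized quadratic sense.

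For $\mathscr R_P\le0$, I use the optimal control $u^*$ of the stopped problem on $[\sigma,\theta]$ from initial state $\mathbf 1_Ax$. The left-hand side is then zero, so $\mathbb E\int\langle X^*_-,\mathrm d\mathscr R_P X^*_-\rangle\le0$. Combined with the previous inequality, this forces $u^*=\Theta X^*_-$ and $\int\langle X^*_-,\mathrm d\mathscr R_P X^*_-\rangle=0$ on every stochastic interval $(\sigma,\theta]$.

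\emph{Step 3: from states to vectors.} This is the main obstacle: passing from $\langle X^*_-,\mathrm d\mathscr R_P X^*_-\rangle=0$ along optimal trajectories to $\mathscr R_P=0$, without global invertibility of $\mathbf X$ (cf.\ \exaref{exam:counterexample2}). My approach is to work on short intervals $(t_i,\rho_i]$ of a fine partition, restarting at each $t_i$ from $\mathbf 1_A x$ with $A\in\mathcal F_{t_i}$. The $n$ optimal trajectories started from $e_1,\dots,e_n$ form a matrix $\mathbf X_i$ with $\mathbf X_i(t_i)=I$. By right-continuity, $\mathbf X_i(s-)\to I$ as $s\downarrow t_i$, and it stays invertible until the first jump after $t_i$. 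Using a nonnegative symmetric measure $\mathscr R_P$ with $\int\langle X,\mathrm d\mathscr R_P X\rangle=0$ for all $x$ and all restart times, polarization gives $\mathbf X_i(s-)^\top\mathrm d\mathscr R_P\,\mathbf X_i(s-)=0$ on $(t_i,\rho_i\wedge T_{\text{next jump}}]$. Therefore $\mathscr R_P=0$ there. As the mesh goes to $0$, these intervals cover $(0,\tau_k]$ up to the atoms at jump times. Those atoms are handled by predictability of $\mathscr R_P$ (it charges no totally inaccessible times, and Poisson jump times are totally inaccessible), and the partition argument of \lemref{lem:localized_error} controls the error. Hence $\mathscr R_P\equiv0$ on $(0,\tau_k]$ for every $k$, and so on $[0,T]$.

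This gives \eqref{AP_identified} directly from \eqref{completedresidual}, and substituting into \eqref{Psemimartingale_canonical} yields the SRE-J \eqref{SREJ}. The delicate points are the truncation needed for admissibility of $\Theta X_-$ and the nonnegativity and atom argument in Step 3.
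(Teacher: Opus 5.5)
There is a genuine gap: you use the matrix-level inequality $\mathrm d\mathscr R_P\succeq0$ before you have proved it, and Steps 2 and 3 end up presupposing each other.

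\emph{Where Step 2 falls short.} The truncated feedback control only gives $\mathbb E\int_{(\sigma,\theta]}\langle X_-,\mathrm d\mathscr R_P X_-\rangle\ge0$ along the particular closed-loop trajectories $X$. The zero-control Lebesgue-differentiation part gives $\mathscr N_P\succeq0$, which is not what is needed. You then combine this with the identity for the optimal pair, $\mathbb E\int|\widehat R^{1/2}(u^*-\Theta X^*_-)|^2\,\mathrm ds=-\mathbb E\int\langle X^*_-,\mathrm d\mathscr R_PX^*_-\rangle$, to force $u^*=\Theta X^*_-$. That step needs $\int\langle X^*_-,\mathrm d\mathscr R_PX^*_-\rangle\ge0$ along the \emph{optimal} trajectory. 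This follows only if $\langle y,\mathrm d\mathscr R_P\,y\rangle\ge0$ for every $y\in\mathbb R^n$, which is exactly the states-to-vectors passage you postpone to Step 3.

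\emph{Where Step 3 falls short.} Step 3 relies on the matrix $\mathbf X_i$ of optimal trajectories staying invertible until the next jump. That is the delicate invertibility problem of the diffusion case. It is available only once you already know $u^*=\Theta X^*_-$, since then $\mathbf X_i$ solves a continuous linear SDE between jumps. Right-continuity at $t_i$ alone gives invertibility only on a random interval of uncontrolled length.

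\emph{The missing idea.} Do the states-to-vectors passage with the \emph{feedback} fundamental matrix, not the optimal one. This is what the paper does.
\begin{enumerate}
\item Work on an inter-jump interval $[\sigma,\theta]$ with $\theta=T_j\wedge T\wedge\tau_k$ and $\sigma=(T_{j-1}+q^{-1})\wedge\theta$. Let $\Phi$ solve the linear SDE with drift $A+B\Theta-\int_Z(E+F\Theta)\lambda(\mathrm dz)$ and diffusion $C+D\Theta$, with $\Phi(\sigma)=I_n$. It is invertible a priori by the local bound on $\int_0^{\tau_k}|\Theta|^2\,\mathrm ds$.
\item Restart at arbitrary stopping times $\rho\le\nu$ from $\xi=\mathbf 1_A\Phi(\rho)y$ with $A\in\mathcal F_\rho$, using the control $\Theta X_-$. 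Then $X_-=\mathbf 1_A\Phi_-y$ exactly. Admissibility follows by stopping when $|\Phi y|\ge R$, so no truncation of $\Theta$ is needed.
\item Now \corref{cor:complsq_identity}(i) and $V^\nu(\rho,\xi)=\mathbb E\langle P(\rho)\xi,\xi\rangle$ give $\mathbb E\bigl[\mathbf 1_A\int_{(\rho,\nu]}\langle y,\Phi_-^\top\mathrm d\mathscr R_P\Phi_-\,y\rangle\bigr]\ge0$.
\item Hence $\int\langle y,\Phi_-^\top\mathrm d\mathscr R_P\Phi_-y\rangle$ is a predictable finite-variation submartingale. By Doob--Meyer, and since a predictable finite-variation martingale is constant, it is nondecreasing. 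This gives $\mathrm d\mathscr R_P\succeq0$ on $(\sigma,\theta]$.
\item Only now does your optimal-control argument go through. It forces $\bar u^i=\Theta\bar X^i_-$, hence $\bar X^i_-=\Phi_-e_i$. The trace of $\Phi_-^\top\mathrm d\mathscr R_P\Phi_-$ then vanishes, so $\mathscr R_P=0$ on $(\sigma,\theta]$.
\item Let $q\to\infty$, take the union over $j$, and let $k\to\infty$.
\end{enumerate}
With this route no atom argument is needed at the jump times. The intervals are closed at $T_j$ and only the continuous $\Phi(s-)$ enters there.

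Your instincts are right: invertibility between jumps, the trace/polarization step, and restarting from $\mathbf 1_A x$. They just have to be applied to $\Phi$ first, to get $\mathrm d\mathscr R_P\succeq0$, and only then to the optimal pairs.
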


\begin{proof}
Let \((T_j)_{j\ge1}\) be the jump times of the Poisson random
measure, and set \(T_0=0\). Since \(\lambda(Z)<\infty\), only finitely many
\(T_j\)'s lie in \([0,T]\), a.s. Fix \(j,k,q\ge1\), and set
\(\theta:=T_j\wedge T\wedge\tau_k\) and
\(\sigma:=(T_{j-1}+q^{-1})\wedge\theta\). Then
\(\sigma,\theta\in\mathcal T[0,T]\), \(\sigma\le\theta\le\tau_k\), and there
is no Poisson jump on \((\sigma,\theta)\).

Let \(\Phi\) be the fundamental matrix on \([\sigma,\theta]\) satisfying \(\Phi(\sigma)=I_n\) and
\[
  \mathrm d\Phi(s)=
  \Big(A+B\Theta-\int_Z(E+F\Theta)\lambda(\mathrm dz)\Big)(s)\Phi(s)\,\mathrm ds
  +(C+D\Theta)(s)\Phi(s)\,\mathrm dW(s),\quad \sigma< s\le \theta\le \tau_k.
\]
By \eqref{Theta_local_bound}, \(\Phi\) is well defined and invertible on \([\sigma,\theta]\).
% For the state corresponding to \(u=\Theta X_-\) and
% \(X(\sigma)=\xi\), we have \(X(s-)=\Phi(s-)\xi\) for \(s\in(\sigma,\theta]\).
Define the matrix measure on \((\sigma,\theta]\) by
\begin{equation}\label{pulledback_measure}
  \mathrm d\mathscr M(s)
  =
  \Phi(s-)^\top \mathrm d\mathscr R_P(s)\Phi(s-).
\end{equation}

We first prove that \(\mathrm d\mathscr M\succeq0\).\footnote{For an
\(\mathbb S^n\)-valued finite-variation measure \(\mu\), we write
\(\mathrm d\mu\succeq0\) if \(\langle y,\mathrm d\mu\,y\rangle\) is a
nonnegative scalar measure for every \(y\in\mathbb R^n\). We also write
\(\operatorname{tr}(\mathrm d\mu):=\sum_{i=1}^n
\langle e_i,\mathrm d\mu\,e_i\rangle\). If \(\mathrm d\mu\succeq0\) and
\(\operatorname{tr}(\mathrm d\mu)=0\), then \(\mathrm d\mu=0\).}
Fix \(y\in\mathbb R^n\), \(R\ge1\), and set
\(\theta_R:=\theta\wedge\inf\{r\ge\sigma:|\Phi(r)y|\ge R\}\). Let
\(\rho,\nu\) be stopping times such that \(\sigma\le\rho\le\nu\le\theta_R\),
and let \(A\in\mathcal F_\rho\).
Set \(\xi:=\mathbf 1_A\Phi(\rho)y\), and take \(u=\Theta X_-\) on
\([\rho,\nu]\). Then \(X(s-)=\mathbf 1_A\Phi(s-)y\) for
\(s\in(\rho,\nu]\). By \proref{prop:stoppedunifconv},
\(V^\nu(\rho,\xi)=\mathbb E[\langle P(\rho)\xi,\xi\rangle\)]. Hence, applying
\corref{cor:complsq_identity}(i) on \([\rho,\nu]\), we obtain
\begin{equation}\label{dM-positive}
\begin{aligned}
0
\le
J^\nu(\rho,\xi;u)-\mathbb E[\langle P(\rho)\xi,\xi\rangle]  
=
\mathbb E\bigg[\int_{(\rho,\nu]}
\big\langle X(s-),\mathrm d\mathscr R_P(s)X(s-)\big\rangle  \bigg]
=
\mathbb E\bigg[
\mathbf 1_A\int_{(\rho,\nu]}
\langle y,\mathrm d\mathscr M(s)y\rangle
\bigg].
\end{aligned}
\end{equation}
Set
\(
  V_y^R(r):=
  \int_{(\sigma,(r\vee\sigma)\wedge\theta_R]}
  \langle y,\mathrm d\mathscr M(s)y\rangle, r\in[0,T].
\)
By \eqref{RP_local}, \(V_y^R\) is integrable and of finite variation.
Moreover, it follows from \eqref{dM-positive} that \(V_y^R\) is a submartingale.
Since \(V_y^R\) is predictable and of
finite variation, its Doob--Meyer decomposition gives \(V_y^R=N+A\), where
\(A\) is predictable and increasing.
Then \(N=V_y^R-A\) is a predictable martingale of finite variation.
Hence, by Jacod and
Shiryaev~\cite[Corollary~I.3.16]{jacod2003limit}, \(N\) is constant.
Therefore \(V_y^R\) is
nondecreasing, and
\(\mathbf 1_{(\sigma,\theta_R]}\langle y,\mathrm d\mathscr M\,y\rangle\ge0\).
Letting \(R\to\infty\) and then \(y\) range over \(\mathbb Q^n\), we obtain
\(\mathrm d\mathscr M\succeq0\) on \((\sigma,\theta]\). Since \(\Phi\) is
invertible, this also gives \(\mathrm d\mathscr R_P\succeq0\) on \((\sigma,\theta]\).

Next, for \(i=1,\ldots,n\), let \((\bar X^i,\bar u^i)\) be the optimal pair of the
stopped problem on \([\sigma,\theta]\) with initial state \(e_i\). By
\proref{prop:stoppedunifconv} and \corref{cor:complsq_identity}(i),
\[
0=
\mathbb E\bigg[\int_\sigma^\theta
\big|\widehat R(s)^{1/2}(\bar u^i(s)-\Theta(s)\bar X^i(s-))\big|^2\,\mathrm ds\bigg]
+
\mathbb E\bigg[\int_{(\sigma,\theta]}
\langle \bar X^i(s-),\mathrm d\mathscr R_P(s)\bar X^i(s-)\rangle \bigg].
\]
Note that the first term is nonnegative, and the second is nonnegative since
\(\mathrm d\mathscr R_P\succeq0\) on \((\sigma,\theta]\). Hence
\(\bar u^i(s)=\Theta(s)\bar X^i(s-)\) for a.e. \(s\in[\sigma,\theta]\),
a.s. Substituting \(\bar X^i(s-)=\Phi(s-)e_i\)
into the second term and summing over \(i\), we obtain
% By uniqueness of the linear equation defining \(\Phi\), we have
% \(\bar X^i(s-)=\Phi(s-)e_i\) for \(s\in(\sigma,\theta]\). 
\[
  0
  =
  \sum_{i=1}^n
  \mathbb E\int_{(\sigma,\theta]}
  \big\langle \Phi(s-)e_i,
  \mathrm d\mathscr R_P(s)\Phi(s-)e_i\big\rangle
  =
  \mathbb E\int_{(\sigma,\theta]}
  \operatorname{tr}\big(\mathrm d\mathscr M(s)\big).
\]
Since \(\mathrm d\mathscr M\succeq0\), it follows that
\(\mathrm d\mathscr M=0\) on \((\sigma,\theta]\). The invertibility of
\(\Phi\) gives \(\mathscr R_P=0\) on \((\sigma,\theta]\).

Finally, letting \(q\to\infty\), we obtain
\(\mathscr R_P=0\) on \((T_{j-1},T_j\wedge T\wedge\tau_k]\). Taking the union
over \(j\) gives \(\mathscr R_P=0\) on \((0,T\wedge\tau_k]\). Letting
\(k\to\infty\) and using \(\tau_k=T\) for all large \(k\), a.s., we obtain
\(\mathscr R_P\equiv0\) on \((0,T]\). The relation \eqref{AP_identified}
follows from \eqref{completedresidual}. Combining \eqref{AP_identified} with
\eqref{Psemimartingale_canonical}--\eqref{Psemimartingale_PRP} and \(P(T)=G\),
we conclude that \((P,\Lambda,\zeta)\) satisfies the SRE-J \eqref{SREJ}. 
This completes the proof.
\end{proof}

% At this stage, \((P,\Lambda,\zeta)\) satisfies the SRE-J with
% \((\Lambda,\zeta)\) known only to be locally square-integrable. The global
% square-integrability required by \defref{def:strongregular} is provided by
% \lemref{lem:BMO} in the next subsection.

% By \eqref{Psemimartingale_canonical}, \lemref{lem:SREidentify}(a), and
% $P(T)=G$, the triple $(P,\Lambda,\zeta)$ satisfies the SRE-J \eqref{SREJ};
% the global square-integrability of $(\Lambda,\zeta)$ required by
% \defref{def:strongregular} is provided by \lemref{lem:BMO} in the next
% subsection.

\subsection{Proof of \texorpdfstring{\thmref{thm:SREsolve}}{Theorem 5.2}}\label{sec:feedback}
We now complete the proof of \thmref{thm:SREsolve}. 
By \lemref{lem:SREidentify}, the aggregated value process already satisfies the
SRE-J, but its martingale components are only locally square-integrable. 
The next lemma gives the
global square-integrability required in \defref{def:strongregular}.

In the following, for a triple \((\bar P,\bar\Lambda,\bar\zeta)\), let
\(\bar{\widehat S}\) and \(\bar{\widehat R}\) be defined by \eqref{RShat}
with \((P,\Lambda,\zeta)\) replaced by
\((\bar P,\bar\Lambda,\bar\zeta)\). 
Moreover, if \(\bar{\widehat R}\) is invertible,
we set
\(
  \bar\Theta:=-\bar{\widehat R}^{-1}\bar{\widehat S}.
\)

% The
% next lemma upgrades this local estimate to a global one. It is an a priori
% estimate for bounded SRE-J solutions with uniformly positive
% \(\widehat R\); the admissibility of the value feedback will be obtained later
% from the completion-of-squares identity and open-loop optimality.

% The last ingredient is an a priori estimate: the martingale part of any
% bounded solution of the SRE-J with uniformly positive $\widehat R$ is a BMO
% martingale. It upgrades the local square-integrability of $(\Lambda,\zeta)$
% to the global one required by \defref{def:strongregular} and yields the
% well-posedness of the closed-loop system.

% [Square-integrability estimate]

\begin{lem}\label{lem:BMO}
Let \((\bar P,\bar\Lambda,\bar\zeta)\) satisfy the SRE-J \eqref{SREJ}, where
\(\bar P\) is bounded, c\`adl\`ag, and adapted, and
\((\bar\Lambda,\bar\zeta)\) are locally square-integrable.
Assume there is \(\delta>0\) such that 
\(\bar{\widehat R}\) satisfies
\(\bar{\widehat R}(s)\ge\delta I_m\) a.s. for a.e. \(s\in[0,T]\). Then there exists a
constant \(C\), such that
% depending only on \(T,\delta,\lambda(Z)\), the bounds of the
% coefficients, and \(\|\bar P\|_{\infty}\), 
\begin{equation}\label{square_integrability_estimate}
  \mathbb E\bigg[\int_0^T
  \left(
    |\bar\Lambda(s)|^2+
    \int_Z|\bar\zeta(s,z)|^2\lambda(\mathrm dz)
    +|\bar\Theta(s)|^2
  \right)\mathrm ds\bigg]
  \le C .
\end{equation}
In particular,
\((\bar\Lambda,\bar\zeta)\in
L^2_{\mathbb F}(0,T;\mathbb S^n)\times
G^2_{\mathbb F}(0,T,\lambda;\mathbb S^n)\) and
\(\bar\Theta\in L^2_{\mathbb F}(0,T;\mathbb R^{m\times n})\).
\end{lem}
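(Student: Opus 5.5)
We prove this by the standard BMO-type argument: apply It\^o's formula to a function of \(\bar P\) whose quadratic variation reproduces \(|\bar\Lambda|^2+\int_Z|\bar\zeta|^2\lambda(\mathrm dz)\), and use the boundedness of \(\bar P\) to control everything else. Concretely, we apply It\^o's formula for c\`adl\`ag semimartingales to \(|\bar P(s)|^2\) (Frobenius norm) on \([0,\tau]\), where \(\tau\) is a localizing stopping time making the stochastic integrals true martingales. The local square-integrability of \((\bar\Lambda,\bar\zeta)\) provides such a sequence \(\tau_\ell\uparrow T\). This gives
\[
  \mathbb E\int_0^{\tau}\Big(|\bar\Lambda|^2+\int_Z|\bar\zeta|^2\lambda(\mathrm dz)\Big)\mathrm ds
  = \mathbb E|\bar P(\tau)|^2-|\bar P(0)|^2+2\,\mathbb E\int_0^{\tau}\langle \bar P_-,\bar\Pi\rangle\,\mathrm ds ,
\]
where \(-\bar\Pi\) denotes the drift of \eqref{SREJ}. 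The boundary terms are bounded by \(2\|\bar P\|_\infty^2\).

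The drift \(\bar\Pi\) has two parts. The first is the part linear in \((\bar\Lambda,\bar\zeta)\), the analogue of \(\Gamma^\circ\) in \eqref{Gammacirc_def}. It is bounded by \(K(1+|\bar\Lambda|+\int_Z|\bar\zeta|\lambda(\mathrm dz))\), so by Young's inequality its contribution is at most \(\tfrac14\) of the left-hand side plus a constant. The second is the quadratic term \(-\bar{\widehat S}^\top\bar{\widehat R}^{-1}\bar{\widehat S}\). This term is the main obstacle: with \(\bar{\widehat R}\ge\delta I_m\), a bound through \(|\bar{\widehat S}|^2/\delta\) would produce \(C|\bar\Lambda|^2\) with a large constant, which cannot be absorbed.

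To overcome this, we apply It\^o's formula instead to \(\Psi(\bar P)\) with \(\Psi(p)=e^{\beta\operatorname{tr}(p)}\), or to \(\operatorname{tr}(\bar P)+\mu|\bar P|^2\). The point is to exploit the sign of the quadratic term rather than its size. In \(\mathrm d\operatorname{tr}\bar P\), the quadratic term enters as \(+\operatorname{tr}(\bar{\widehat S}^\top\bar{\widehat R}^{-1}\bar{\widehat S})\ge0\). Integrating \(\mathrm d\operatorname{tr}\bar P\) and taking expectations, the martingale parts vanish, and boundedness of \(\bar P\) gives
\[
  \mathbb E\int_0^\tau \operatorname{tr}(\bar{\widehat S}^\top\bar{\widehat R}^{-1}\bar{\widehat S})\,\mathrm ds
  \le C + C\,\mathbb E\int_0^\tau\Big(|\bar\Lambda|+\int_Z|\bar\zeta|\lambda(\mathrm dz)\Big)\mathrm ds .
\]
Here we use \(\lambda(Z)<\infty\) and Cauchy--Schwarz in the last step. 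Since \(\bar{\widehat R}\) is bounded above only in terms of \(\bar\zeta\) (via \eqref{RS_pointwise_bound}), we also note \(\operatorname{tr}(\bar{\widehat S}^\top\bar{\widehat R}^{-1}\bar{\widehat S})\ge|\bar{\widehat R}^{-1/2}\bar{\widehat S}|^2\). In the \(|\bar P|^2\) identity, the quadratic term enters as \(2\langle\bar P_-,\bar{\widehat S}^\top\bar{\widehat R}^{-1}\bar{\widehat S}\rangle\), which is bounded by \(2\|\bar P\|_\infty\operatorname{tr}(\bar{\widehat S}^\top\bar{\widehat R}^{-1}\bar{\widehat S})\) because the matrix is positive semidefinite.

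Combining the two estimates, with \(L(\tau):=\mathbb E\int_0^\tau(|\bar\Lambda|^2+\int_Z|\bar\zeta|^2\lambda(\mathrm dz))\mathrm ds\), we get
\[
  L(\tau)\le C+C\,L(\tau)^{1/2} .
\]
Hence \(L(\tau)\le C\) uniformly in the localization, and Fatou's lemma as \(\tau_\ell\uparrow T\) gives the bound for \((\bar\Lambda,\bar\zeta)\). For \(\bar\Theta\), we use
\[
  |\bar\Theta|^2=|\bar{\widehat R}^{-1}\bar{\widehat S}|^2\le\delta^{-1}|\bar{\widehat R}^{-1/2}\bar{\widehat S}|^2\le\delta^{-1}\operatorname{tr}(\bar{\widehat S}^\top\bar{\widehat R}^{-1}\bar{\widehat S}) .
\]
The right-hand side is integrable by the trace estimate together with the bound on \(L\), which yields \eqref{square_integrability_estimate}.
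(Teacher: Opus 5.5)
Your proposal is correct and follows essentially the same route as the paper. The quadratic term is controlled through the trace equation, where $\operatorname{tr}(\bar{\widehat S}^\top\bar{\widehat R}^{-1}\bar{\widehat S})\ge0$ has a favorable sign; that bound is then inserted into It\^o's formula for $|\bar P|^2$ via $|\langle\bar P_-,\bar{\widehat S}^\top\bar{\widehat R}^{-1}\bar{\widehat S}\rangle|\le\|\bar P\|_\infty\operatorname{tr}(\bar{\widehat S}^\top\bar{\widehat R}^{-1}\bar{\widehat S})$ and absorbed by Young's inequality under localization. The differences are minor: the paper first shows $|\bar\zeta|\le 2\|\bar P\|_\infty$ from $\bar\zeta(T_j,Z_j)=\Delta\bar P(T_j)$ and bounds $\bar\Theta$ via $|\bar{\widehat S}|\le C(1+|\bar\Lambda|)$, whereas you keep $\bar\zeta$ inside $L(\tau)$ via Cauchy--Schwarz with $\lambda(Z)<\infty$ and use $|\bar\Theta|^2\le\delta^{-1}\operatorname{tr}(\bar{\widehat S}^\top\bar{\widehat R}^{-1}\bar{\widehat S})$, and both variants are valid.
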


\begin{proof}
\emph{Step 1: Estimate of \(\bar\zeta\).}
Let
\(H(s,z):=\mathbf 1_{\{|\bar\zeta(s,z)|>2\|\bar P\|_{\infty}\}}\) and \(\mathsf N_T:=N((0,T]\times Z)\).
Since the finite-variation part of the SRE-J is absolutely continuous in
\(\mathrm ds\) and the Brownian part is continuous, at each atom
\((T_j,Z_j)\) of \(N\), one has
\(\bar\zeta(T_j,Z_j)=\Delta\bar P(T_j)\). Hence \(H(T_j,Z_j)=0\) a.s. By the
compensator identity,
\[
  \mathbb E\bigg[\int_0^T\!\!\int_Z H(s,z)\lambda(\mathrm dz)\mathrm ds\bigg]
  =
  \mathbb E\bigg[\int_0^T\!\!\int_Z H(s,z)N(\mathrm ds,\mathrm dz)\bigg]
  =
  \mathbb E\bigg[\sum_{j=1}^{\mathsf N_T} H(T_j,Z_j)\bigg]=0, 
\]
Thus
\(
  |\bar\zeta(s,z)|\le 2\|\bar P\|_{\infty},
  \text{a.s. for a.e. }(s,z)\in[0,T]\times Z .
\)
Since \(\lambda(Z)<\infty\), it follows that \(\bar\zeta\in G^2_{\mathbb F}(0,T,\lambda;\mathbb S^n)\).

\emph{Step 2: Estimate of \(\bar\Lambda\).}
Let \(\bar\Gamma^\circ\) be defined by \eqref{Gammacirc_def}, with
\((P,\Lambda,\zeta)\) replaced by
\((\bar P,\bar\Lambda,\bar\zeta)\), and set
\(\bar\Psi:=\bar{\widehat S}^{\top}\bar{\widehat R}^{-1}\bar{\widehat S}\).
Then \(\bar\Psi\succeq0\) and
\[
  \mathrm d\bar P(s)
  =
  \big(\bar\Psi(s)-\bar\Gamma^\circ(s)\big)\mathrm ds
  +\bar\Lambda(s)\mathrm dW(s)
  +\int_Z\bar\zeta(s,z)\widetilde N(\mathrm ds,\mathrm dz).
\]
By the estimate in Step 1, the boundedness of the coefficients, and
\(\lambda(Z)<\infty\),
\begin{equation}\label{trace-inequ}
  |\operatorname{tr}\bar\Gamma^\circ(s)|
  +|\operatorname{tr}(\bar P(s-)\bar\Gamma^\circ(s))|
  \le C(1+|\bar\Lambda(s)|),
  \qquad
  |\operatorname{tr}(\bar P(s-)\bar\Psi(s))|
  \le C\operatorname{tr}\bar\Psi(s).
\end{equation}
For \(j\ge1\), set the stopping time 
\(
  \nu_j:=T\wedge\inf\left\{r\ge0:
  \int_0^r|\bar\Lambda(s)|^2\,\mathrm ds\ge j\right\}.
\)
Taking expectations in the trace equation for \(\bar P\) on \([0,\nu_j]\) and using \eqref{trace-inequ} yields
\begin{equation}\label{trace-inequ-2}
  \mathbb E\int_0^{\nu_j}\operatorname{tr}\bar\Psi(s)\,\mathrm ds
  \le C+C\mathbb E\int_0^{\nu_j}|\bar\Lambda(s)|\,\mathrm ds .
\end{equation}
On the other hand, applying It\^o's formula to \(|\bar P|^2=\operatorname{tr}(\bar P^2)\) 
on  \([0,\nu_j]\) and using \eqref{trace-inequ}-\eqref{trace-inequ-2} gives
\[
\begin{aligned}
  \mathbb E\int_0^{\nu_j}
  \left(
    |\bar\Lambda(s)|^2+
    \int_Z|\bar\zeta(s,z)|^2\lambda(\mathrm dz)
  \right)\mathrm ds  
  &\le C
  +C\mathbb E\int_0^{\nu_j}\operatorname{tr}\bar\Psi(s)\,\mathrm ds
  +C\mathbb E\int_0^{\nu_j}(1+|\bar\Lambda(s)|)\,\mathrm ds  \\
  &\le C+C\mathbb E\int_0^{\nu_j}|\bar\Lambda(s)|\,\mathrm ds .
\end{aligned}
\]
By Young's inequality, one has 
\(C|\bar\Lambda|\le \frac12|\bar\Lambda|^2+\frac{C^2}{2}\)
and then
\(
  \mathbb E\big[\int_0^{\nu_j}|\bar\Lambda(s)|^2\,\mathrm ds\big]\le C^2+2C,
\)
where \(C\) is independent of \(j\). Since \(\nu_j\uparrow T\) a.s., it follows from 
the monotone
convergence theorem that \(\bar\Lambda\in L^2_{\mathbb F}(0,T;\mathbb S^n)\). 

\emph{Step 3: Estimate of \(\bar\Theta\).}
By \eqref{RShat} and Step 1,
\(|\bar{\widehat S}(s)|\le C(1+|\bar\Lambda(s)|)\). Since
\(\bar{\widehat R}\ge\delta I_m\),
\[
  |\bar\Theta(s)|^2
  \le C|\bar{\widehat S}(s)|^2
  \le C(1+|\bar\Lambda(s)|^2).
\]
Combining this estimate with Step~2 proves
\eqref{square_integrability_estimate}.
\end{proof}

\begin{proof}[Proof of \thmref{thm:SREsolve}]
(i) By \lemref{lem:H3propagation} and the random-time Hilbert-space
construction in \thmref{thm:Prandomtime}, the random-time problem is uniquely
open-loop solvable and \eqref{randomtime_value_in_SRE_section} holds for every
\(\tau\in\mathcal T[0,T]\).

\smallskip
(ii) \emph{Existence.}
By \proref{prop:Psemimartingale}, the family
\(\{P(\tau)\}_{\tau\in\mathcal T[0,T]}\) is aggregated by a bounded
c\`adl\`ag special semimartingale \(P\), with martingale components
\((\Lambda,\zeta)\). Combining \lemref{lem:Rhat_unifpos} with
\lemref{lem:SREidentify}, we obtain that \((P,\Lambda,\zeta)\) satisfies
the SRE-J \eqref{SREJ} and \(\widehat R\ge\varepsilon I_m\). 
Furthermore, \((\Lambda,\zeta)\in L^2_{\mathbb F}(0,T;\mathbb S^n)\times
G^2_{\mathbb F}(0,T,\lambda;\mathbb S^n)\) by \lemref{lem:BMO}. Hence
\((P,\Lambda,\zeta)\) is a strongly regular solution.

\emph{Maximality.}
Let \((P',\Lambda',\zeta')\) be any strongly regular solution of
\eqref{SREJ}, and set
\(\Theta'=-(\widehat R')^{-1}\widehat S'\). For this triple, 
\(\mathscr R_{P'}=0\). Applying the completion of square method in
\corref{cor:complsq_identity}(ii) to \((P',\Lambda',\zeta')\), we obtain,
for every \((t,\xi)\in\mathcal D\) and \(u\in\mathcal U[t,T]\), with state
\(X\),
\begin{equation}\label{prime_verification_identity}
  J(t,\xi;u)
  =
  \mathbb E\langle P'(t)\xi,\xi\rangle
  +\mathbb E\int_t^T
  \big|\widehat R'(s)^{1/2}
  \big(u(s)-\Theta'(s)X(s-)\big)\big|^2\,\mathrm ds .
\end{equation}
Taking the infimum over \(u\) and using part~(i) with \(\tau=t\) gives
\(
  \mathbb E\langle P'(t)\xi,\xi\rangle
  \le V(t,\xi)
  =
  \mathbb E\langle P(t)\xi,\xi\rangle .
\)
By the arbitrariness of \(t\) and \(\xi\), and the c\`adl\`ag property of
\(P\) and \(P'\), we have \(P'(t)\le P(t)\) for all \(t\in[0,T]\), a.s.
Thus \(P\) is maximal. If
\((\widetilde P,\widetilde\Lambda,\widetilde\zeta)\) is another maximal
strongly regular solution, then \(\widetilde P\le P\), while maximality of
\(\widetilde P\) gives \(P\le\widetilde P\). Hence \(\widetilde P=P\). It
follows from the uniqueness of the Brownian--Poisson martingale
representation that \(\widetilde\Lambda=\Lambda\) and
\(\widetilde\zeta=\zeta\).

\smallskip
(iii) Write \(u^*=u^*_{t,\xi}\). By part~(i) with \(\tau=t\), we have $V(t,\xi)=\mathbb E[\langle P(t)\xi,\xi\rangle]$.
Applying the completion of square formula \eqref{complsq_global} to the unique open-loop optimal pair
\((X^*,u^*)\), we obtain
\[
\begin{aligned}
  \mathbb E[\langle P(t)\xi,\xi\rangle]
  = V(t,\xi)
  = J(t,\xi;u^*) \overset{\eqref{complsq_global}}{=}
  \mathbb E[\langle P(t)\xi,\xi\rangle]
  +\mathbb E\bigg[\int_t^T
  \big|\widehat R(s)^{1/2}
  \big(u^*(s)-\Theta(s)X^*(s-)\big)\big|^2\,\mathrm ds\bigg] .
\end{aligned}
\]
Since \(\widehat R\ge\varepsilon I_m\), this gives
\(
  u^*(s)=\Theta(s)X^*(s-),
 \text{a.e. }s\in[t,T],\ \text{a.s.}
\)
Hence \(X^*\) satisfies the closed-loop equation \eqref{closestate}.
Finally, by \thmref{Opensolve},
\(u^*=-\mathcal N_t^{-1}\mathcal L_t\xi\). Moreover, it follows from 
\lemref{lem:H3propagation} that
\(\mathcal N_t\ge\varepsilon I\), while \(\mathcal L_t\) is bounded by
\proref{prop:NtLtbounded}(ii). Hence
\[
  \mathbb E\bigg[\int_t^T|\Theta(s)X^*(s-)|^2\,\mathrm ds\bigg]
  =
  \mathbb E\bigg[\int_t^T|u^*(s)|^2\,\mathrm ds\bigg]
  \le C\mathbb E[|\xi|^2] .
\]
By the state estimate \eqref{FSDEestimate} with control \(u^*\), we have
\(
  \mathbb E[\sup_{s\in[t,T]}|X^*(s)|^2]
  \le C\mathbb E|\xi|^2 .
\)
The proof is complete.
\end{proof}

\begin{remark}
\label{rem:why_maximal}

% In \thmref{thm:SREsolve}(ii), the solvability of the SRE-J is stated in terms
% of a maximal strongly regular solution, rather than a unique strongly regular
% solution. The reason is that feedback admissibility is not clear for an
% arbitrary strongly regular solution \((P',\Lambda',\zeta')\). Indeed, from
% \eqref{prime_verification_identity}, if the feedback control
% \(u=\Theta'X'_-\) induced by \((P',\Lambda',\zeta')\) is admissible, then
% \(P=P'\). Consequently, by the uniqueness of the Brownian--Poisson martingale
% representation, the two SRE-J solutions coincide.

% Under the standard condition in
% \cite{tang2003general,tang2015dynamic,zhang2020backward}, the solution to the
% SRE is nonnegative, i.e. \(P\ge0\). This property is used to prove the
% admissibility of the feedback control. 
% For instance, Tang~\cite[Theorem~5.1]{tang2015dynamic} uses this
% property and a localization argument to prove a verification theorem, which
% gives the \(L^2\)-estimate needed for the feedback control induced by a
% bounded nonnegative solution of the SRE.

% In the present SLQ-J problem, neither methods applies in general. The process
% \(P'\) of another strongly regular solution need not be nonnegative, and the
% closed-loop jump matrix \(I_n+E+F\Theta'\) may be singular, as shown in
% \exaref{exam:counterexample2}. Hence strong regularity alone does not imply
% that the feedback control \(\Theta'X_-\) is admissible. The proof above gives
% \(P'\le P\), so \(P\) is maximal. The equality \(P'=P\) remains open under the present assumptions.

In \thmref{thm:SREsolve}(ii), the solvability of the SRE-J is stated in terms
of a maximal strongly regular solution, rather than a unique strongly regular
solution. The reason is that, for an arbitrary strongly regular solution
\((P',\Lambda',\zeta')\), it is not immediate that the feedback control
\(\Theta'X'_-\) belongs to \(\mathcal U[t,T]\). Indeed, from
\eqref{prime_verification_identity}, taking the infimum over all admissible
controls gives \(P'\le P\). If, in addition, \(u=\Theta'X'_-\) is admissible, 
then the same identity leads to \(P'=P\), and comparing
the martingale parts in the two SRE-Js yields
\(\Lambda'=\Lambda\) and \(\zeta'=\zeta\). This is consistent with the
observation in \cite{sun2021indefinite} that, for SLQ problems with random
coefficients, the feedback coefficient generated by the SRE may be unbounded,
and the well-posedness of the corresponding closed-loop system is not obvious.

Under the standard condition, Tang~\cite{tang2003general,tang2015dynamic}
and Zhang, Dong, and Meng~\cite{zhang2020backward} handle this
difficulty by using the nonnegativity of the SRE solution, i.e.,
$P\ge0$.
More precisely, Tang~\cite[Theorem~5.1]{tang2015dynamic} uses this
property and a localization method to prove a verification theorem 
to obtain the \(L^2\) estimate needed
for the feedback control.
However, a general strongly regular solution
\(P'\) need not be nonnegative in the present indefinite SLQ-J problem. 
Hence the above verification theorem is not
available. The proof gives \(P'\le P\), so \(P\) is maximal. 
The equality \(P'=P\) remains open under the present assumptions.

\end{remark}

\section{Sufficient Conditions for Uniform Convexity}\label{sec:sufficient}

We present some sufficient conditions on the coefficients of the state equation
and the weighting matrices that guarantee the uniform convexity condition
\eqref{N_0uniformalconvex}. We begin with the standard conditions and then show
that uniform convexity may hold even when neither \eqref{convex1} nor
\eqref{convex2} is satisfied.

% [Standard sufficient conditions]

\begin{prop}\label{prop:basic_unifconv}
 Under conditions \ref{SLQH1}--\ref{SLQH2}, suppose that either \eqref{convex1} or \eqref{convex2} holds. Then the mapping $u \mapsto J(t, 0; u)$ is uniformly convex for all $t \in [0, T)$.
\end{prop}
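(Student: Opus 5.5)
Under either \eqref{convex1} or \eqref{convex2}, I would show the bound \eqref{H3t} directly, with a constant $\varepsilon>0$ that does not depend on $t$. Fix $t\in[0,T)$ and $u\in\mathcal U[t,T]$, and let $X=X^{t,0;u}$, so that $X(t)=0$. In both cases $S=0$, and by \thmref{thm:Jquadratic} we have
\[
J(t,0;u)=\mathbb E\Big[\langle GX(T),X(T)\rangle+\int_t^T\big(\langle QX,X\rangle+\langle Ru,u\rangle\big)\mathrm ds\Big].
\]

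\emph{Case \eqref{convex1}.} Here $G\ge0$ and $Q\ge0$, so the first two terms are nonnegative. Since $R\ge\varepsilon_R I_m$, we get $J(t,0;u)\ge\varepsilon_R\,\mathbb E\int_t^T|u|^2\mathrm ds$, and we may take $\varepsilon=\varepsilon_R$.

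\emph{Case \eqref{convex2}.} Here $Q,R\ge0$, so $J(t,0;u)\ge\varepsilon_G\,\mathbb E|X(T)|^2$. It therefore suffices to show
\[
\mathbb E|X(T)|^2\ge c\,\mathbb E\int_t^T|u|^2\mathrm ds
\]
with $c>0$ independent of $t$ and $u$. I would prove this by running the state equation backward, which turns $u$ into an output of a BSDE.

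\begin{enumerate}[label=(\alph*)]
\item Set $\eta:=X(T)$. Define the martingale integrands $\gamma:=CX_-+Du$ and $\delta(s,z):=E(s,z)X(s-)+F(s,z)u(s)$. Then $(X,\gamma,\delta)$ solves the linear BSDE
\[
\mathrm dX=(AX_-+Bu)\mathrm ds+\gamma\,\mathrm dW+\int_Z\delta\,\widetilde N(\mathrm ds,\mathrm dz),\qquad X(T)=\eta .
\]
\item Apply It\^o's formula to $|X|^2$ on $[s,T]$ and take expectations. This gives
\[
\mathbb E|X(s)|^2+\mathbb E\int_s^T\Big(|\gamma|^2+\int_Z|\delta|^2\lambda(\mathrm dz)\Big)\mathrm dr=\mathbb E|\eta|^2-2\,\mathbb E\int_s^T\langle X,AX+Bu\rangle\mathrm dr .
\]
\item Use the nondegeneracy assumption $D^\top D+\int_Z F^\top F\,\lambda(\mathrm dz)\ge\varepsilon_0 I_m$ together with boundedness of the coefficients. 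For a small $\theta>0$ this yields
\[
|\gamma|^2+\int_Z|\delta|^2\lambda(\mathrm dz)\ge(1-\theta)\Big(|Du|^2+\int_Z|Fu|^2\lambda\Big)-C_\theta|X_-|^2\ge\tfrac{\varepsilon_0}{2}|u|^2-C|X_-|^2 .
\]
\item Bound the drift term by Young's inequality: $2|\langle X,Bu\rangle|\le\frac{\varepsilon_0}{4}|u|^2+C|X|^2$.
\item Combining (b)--(d) gives
\[
\mathbb E|X(s)|^2+\tfrac{\varepsilon_0}{4}\mathbb E\int_s^T|u|^2\mathrm dr\le\mathbb E|\eta|^2+C\,\mathbb E\int_s^T|X|^2\mathrm dr .
\]
\item Apply Gronwall's inequality backward in $s$. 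This gives $\sup_s\mathbb E|X(s)|^2\le e^{CT}\mathbb E|\eta|^2$. Substituting back into (e) at $s=t$ yields $\frac{\varepsilon_0}{4}\mathbb E\int_t^T|u|^2\le K\,\mathbb E|\eta|^2$, with $K$ depending only on the coefficient bounds and $T$.
\end{enumerate}

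Hence $J(t,0;u)\ge\varepsilon_G\varepsilon_0/(4K)\,\mathbb E\int_t^T|u|^2\mathrm ds$.

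The main obstacle is step (c): one must absorb the cross terms $2\langle CX_-,Du\rangle$ and $2\langle EX_-,Fu\rangle$ using only the combined nondegeneracy of $D$ and $F$. This works because $|\gamma|^2\ge(1-\theta)|Du|^2-(\theta^{-1}-1)|CX_-|^2$, and similarly for $\delta$ pointwise in $z$. Integrating in $z$ uses $\lambda(Z)<\infty$ and the boundedness of $E$, $F$.

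A secondary technical point is justifying the expectation of the stochastic integrals in (b). This follows because $X\in S^2_{\mathbb F}$ and $\gamma,\delta$ are square-integrable, so the local martingales are true martingales. If needed, one can localize and then pass to the limit by dominated and monotone convergence.
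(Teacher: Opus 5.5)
Your proof is correct. In case \eqref{convex2} it takes a different, more hands-on route than the paper; case \eqref{convex1} is identical.

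The paper eliminates the control. From $Z=CX_-+Du$, $r=EX_-+Fu$ and the invertibility of $H=D^\top D+\int_Z F^\top F\,\lambda(\mathrm dz)$, it solves
$u=H^{-1}\bigl[D^\top Z+\int_Z F^\top r\,\lambda(\mathrm dz)-\bigl(D^\top C+\int_Z F^\top E\,\lambda(\mathrm dz)\bigr)X_-\bigr]$.
This turns the state equation into the linear BSDE \eqref{BSDEu} with bounded coefficients. The a priori estimate of \lemref{existunique}(ii) then gives \eqref{bridge}. Finally, the cross terms $|CX_-|^2$ and $|EX_-|^2$ are absorbed into the $\mathbb E\sup|X|^2$ term on the left of \eqref{bridge}, by choosing $\gamma$ with $2(\gamma^{-1}-1)\beta T<1$.

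You instead keep $u$ in the drift and apply It\^o's formula to $|X|^2$ directly. You use $H\ge\varepsilon_0 I_m$ only as a coercivity bound on the martingale integrands, absorb $2\langle X,Bu\rangle$ by Young's inequality, and close with a backward Gronwall argument. In effect you re-derive the BSDE energy estimate by hand for this particular equation.

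Your version is self-contained: it needs only It\^o's formula and Gronwall, and no smallness condition linking $\gamma$ to $\beta T$, because Gronwall absorbs constants of any size. The paper's version is shorter once \lemref{existunique} is available. It also makes the structural point explicit: nondegeneracy of $(D,F)$ means the control can be recovered from the terminal state through a well-posed backward equation.

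Both arguments give a constant independent of $t$ and $u$. A minor point: the formula you display for $J(t,0;u)$ is just the definition \eqref{costL} with $\xi=0$ and $S=0$, so you do not need \thmref{thm:Jquadratic} there.
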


\begin{proof}
 (i) Suppose \eqref{convex1} holds. Consider the state SDE with $\xi=0$:
  \begin{equation}\label{FSDEu}
     \begin{cases}
      \mathrm{d}X^{(u)}(s) =\left[ A(s) X^{(u)}\left( s- \right) +B(s) u(s) \right] \mathrm{d}s+\left[ C(s) X^{(u)}\left( s- \right) +D(s) u(s) \right] \mathrm{d}W(s)\\
      \hspace{5em}+\int_Z{\left[ E\left( s,z \right) X^{(u)}\left( s- \right) +F\left( s,z \right) u(s) \right] \widetilde{N}\left( \mathrm{d}s,\mathrm{d}z \right)},\quad s\in [t,T],\\
      \hspace{0.6em} X^{(u)}(t)=0.\\
    \end{cases} 
\end{equation}
Under condition \eqref{convex1}, since $Q\ge 0$, $G\ge 0$, $S=0$, and $R\ge \varepsilon_R I_m$:
\begin{align*}
J(t,0;u)=&~\mathbb{E}\bigg[ \langle GX^{(u)}(T),X^{(u)}(T)\rangle +\int_t^T{\langle Q(s)X^{(u)}(s),X^{(u)}(s)\rangle +\langle R(s)u(s),u(s)\rangle}\mathrm{d}s \bigg] 
\\
\ge &~\mathbb{E}\bigg[\int_t^T{\langle R(s)u(s),u(s)\rangle}\mathrm{d}s\bigg]  \ge \varepsilon_R \mathbb{E}\bigg[\int_t^T{|u(s)|^2}\mathrm{d}s\bigg] .
\end{align*}

(ii) Suppose \eqref{convex2} holds. Set
\(
  H(s):=D(s)^\top D(s)
  +\int_Z F(s,z)^\top F(s,z)\lambda(\mathrm dz).
\)
Then \(H(s)\ge\varepsilon_0 I_m\) for some \(\varepsilon_0>0\), and \(H^{-1}\) is
uniformly bounded. 
Then we can rewrite \eqref{FSDEu} as a BSDE:
\begin{equation}\label{BSDEu}
\begin{cases}
\mathrm dY(s)=\Big\{
A(s)Y(s-)-B(s)H(s)^{-1}
\Big[\big(D(s)^\top C(s)+\int_ZF(s,z)^\top E(s,z)\lambda(\mathrm dz)\big)Y(s-)\\
\hspace{4em}
-D(s)^\top Z(s)-\int_ZF(s,z)^\top r(s,z)\lambda(\mathrm dz)
\Big]\Big\}\mathrm ds+Z(s)\mathrm dW(s)+\int_Zr(s,z)\widetilde N(\mathrm ds,\mathrm dz),
~ s\in[t,T],\\
Y(T)=X^{(u)}(T),
\end{cases}
\end{equation}
with $Y(s)=X^{(u)}(s)$, $Z(s)=C(s)X^{(u)}(s-)+D(s)u(s)$, $r(s,z)=E(s,z)X^{(u)}(s-)+F(s,z)u(s)$.
By \lemref{existunique}, the unique adapted solution $(Y,Z,r)\in S_{\mathbb{F}}^{2}(t,T;\mathbb{R}^{n})\times L_{\mathbb{F}}^{2}(t,T;\mathbb{R}^{n})\times G_{\mathbb{F}}^{2}(t,T,\lambda;\mathbb{R}^{n})$ of \eqref{BSDEu} satisfies
\begin{equation}
\begin{aligned}\label{bridge}
\mathbb{E}\bigg[ \mathop {\mathrm{sup}} \limits_{t\le s\le T}|Y(s)|^2+\int_t^T{|}Z(s)|^2\mathrm{d}s+\int_t^T{\int_Z{|r(s,z)|^2}\lambda(\mathrm{d}z)}\mathrm{d}s \bigg] \le K\mathbb{E}\left[ |X^{(u)}(T)|^2 \right],
\end{aligned}
\end{equation}
where $K>0$ is independent of $X^{(u)}(T)$. By \ref{SLQH1}, there exists $\beta>0$ such that $\int_Z{|E(s,z)|^2\lambda(\mathrm{d}z)}+|C(s)|^2\le \beta$. Let $\gamma \in(0,1)$ satisfy $2(\gamma^{-1}-1)T\beta<1$. Using $(a+b)^2\ge (1-\gamma)a^2-(\gamma^{-1}-1)b^2$, we obtain
\begin{equation}\label{inequa1}
\begin{aligned}
\mathbb{E}\bigg[  \int_t^T{|}Z(s)|^2\mathrm{d}s\bigg]
&\ge (1-\gamma)\mathbb{E}\bigg[ \int_t^T{|D(s)u(s)|^2}\mathrm{d}s\bigg]-(\gamma^{-1}-1)\beta T\mathbb{E}\bigg[\mathop{\mathrm{sup}}\limits_{t\le s\le T}|X^{(u)}(s)|^2\bigg],
\end{aligned}   
\end{equation}
and 
\begin{equation}\label{inequa2}
  \begin{aligned}
    \mathbb{E}\bigg[ \int_t^T{\int_Z{|r(s,z)|^2}\lambda(\mathrm{d}z)}\mathrm{d}s\bigg]
    \ge (1-\gamma)\mathbb{E}\bigg[ \int_t^T{\int_Z{|F(s,z)u(s)|^2}\lambda(\mathrm{d}z)}\mathrm{d}s\bigg]-(\gamma^{-1}-1)\beta T\mathbb{E}\bigg[ \mathop{\mathrm{sup}}\limits_{t\le s\le T}|X^{(u)}(s)|^2\bigg].
  \end{aligned}   
  \end{equation}
Combining \eqref{inequa1}--\eqref{inequa2} and noting $D(s)^{\top}D(s)+\int_Z{F(s,z)^{\top}F(s,z)\lambda(\mathrm{d}z)}\ge \varepsilon_0 I_m$ yields
\begin{align*}
K\mathbb{E}[|X^{(u)}(T)|^2]
\ge (1-\gamma)\varepsilon_0\mathbb{E}\bigg[\int_t^T{|}u(s)|^2\mathrm{d}s\bigg].
\end{align*} 
Finally, since $Q,R\ge 0$, $S=0$, and $G\ge \varepsilon_G I_n$, we have
$$
J(t,0;u)\ge \mathbb{E}[\langle GX^{(u)}(T),X^{(u)}(T)\rangle]\ge \varepsilon_G\mathbb{E}[|X^{(u)}(T)|^2]\ge \frac{(1-\gamma)\varepsilon_0\varepsilon_G}{K}\mathbb{E}\bigg[\int_t^T{|}u(s)|^2\mathrm{d}s\bigg].
$$
This completes the proof.
\end{proof}

\begin{thm}\label{thm:unifconvex1}
  Let \ref{SLQH1}--\ref{SLQH2} hold. Suppose that $B(\cdot)=0$, $C(\cdot)=0$, $E(\cdot,\cdot)=0$, $S(\cdot)=0$. Let $\Phi=\{\Phi(s);0\le s\le T\}$ be the solution to the random ODE
  $$
  \mathrm{d}\Phi(s)=A(s)\Phi(s)\mathrm{d}s,\qquad \Phi(0)=I_n.
  $$
  Let $\lambda_G$ and $\lambda_Q(s)$ be the essential infimum of the smallest eigenvalues of $G$ and $Q(s)$, respectively. Assume $\lambda_G\ge0$ and $\lambda_Q(s)\ge0$ a.e. 
  Set
  \(
    H(r):=D(r)^\top D(r)+\int_ZF(r,z)^\top F(r,z)\lambda(\mathrm dz).
  \)
  If there exists $\delta>0$ such that
  \begin{equation}\label{unifconvex_cond1}
    \left[
    \frac{\lambda_G}{\|\Phi(T)^{-1}\|_{\infty}^2}
    +\int_r^T\frac{\lambda_Q(s)}
    {\|\Phi(s)^{-1}\|_{\infty}^2}\mathrm ds
    \right]
    \frac{H(r)}{\|\Phi(r)\|_{\infty}^2}
    +R(r)\ge \delta I_m,
  \end{equation}
  a.s. for a.e. $r\in[0,T]$, then $u\mapsto J(t,0;u)$ is uniformly
  convex for every $t\in[0,T)$.
\end{thm}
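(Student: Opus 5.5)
With $B=C=E=S=0$ and $\xi=0$, the state equation in \eqref{FSDEu} reduces to
\[
\mathrm dX(s)=A(s)X(s-)\,\mathrm ds+D(s)u(s)\,\mathrm dW(s)+\int_Z F(s,z)u(s)\,\widetilde N(\mathrm ds,\mathrm dz),\qquad X(t)=0 .
\]
The plan is to solve it by variation of constants with the pathwise fundamental matrix $\Phi$. Since $\Phi$ solves a random ODE with bounded $A$, it is continuous and of finite variation, so no Itô correction arises. We obtain
\[
X(s)=\Phi(s)\int_t^s\Phi(r)^{-1}\Big[D(r)u(r)\,\mathrm dW(r)+\int_ZF(r,z)u(r)\,\widetilde N(\mathrm dr,\mathrm dz)\Big].
\]
One caveat: $\Phi$ is only adapted, not deterministic, because $A$ is random. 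It is still predictable and continuous, so the stochastic integrals make sense. This can be verified directly by applying Itô's product rule to $\Phi(s)^{-1}X(s)$, using $\mathrm d\Phi^{-1}=-\Phi^{-1}A\,\mathrm ds$.

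Next, since $G\ge\lambda_G I_n$ and $Q(s)\ge\lambda_Q(s)I_n$, we have $\langle GX(T),X(T)\rangle\ge\lambda_G|X(T)|^2$, and similarly for $Q$. We also have $|\Phi(s)^{-1}x|\le\|\Phi(s)^{-1}\|_\infty|x|$, hence $|X(s)|^2\ge|\Phi(s)^{-1}X(s)|^2/\|\Phi(s)^{-1}\|_\infty^2$. Writing $M(s):=\Phi(s)^{-1}X(s)$, which is a square-integrable martingale starting from $0$, the Itô isometry for the Brownian and compensated Poisson integrals gives
\[
\mathbb E|M(s)|^2=\mathbb E\int_t^s\Big(|\Phi(r)^{-1}D(r)u(r)|^2+\int_Z|\Phi(r)^{-1}F(r,z)u(r)|^2\lambda(\mathrm dz)\Big)\mathrm dr .
\]
Then we bound $|\Phi(r)^{-1}y|\ge|y|/\|\Phi(r)\|_\infty$, since $|y|=|\Phi(r)\Phi(r)^{-1}y|\le\|\Phi(r)\|_\infty|\Phi(r)^{-1}y|$. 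This yields $\mathbb E|M(s)|^2\ge\mathbb E\int_t^s\langle H(r)u(r),u(r)\rangle\|\Phi(r)\|_\infty^{-2}\,\mathrm dr$.

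We then assemble the pieces:
\[
J(t,0;u)\ge \frac{\lambda_G}{\|\Phi(T)^{-1}\|_\infty^2}\mathbb E|M(T)|^2+\int_t^T\frac{\lambda_Q(s)}{\|\Phi(s)^{-1}\|_\infty^2}\mathbb E|M(s)|^2\,\mathrm ds+\mathbb E\int_t^T\langle R u,u\rangle\,\mathrm dr .
\]
Inserting the lower bounds for $\mathbb E|M(s)|^2$ and using Fubini ($\int_t^T\lambda_Q(s)\cdots\int_t^s\mathrm dr\,\mathrm ds=\int_t^T\int_r^T\cdots\mathrm ds\,\mathrm dr$), we obtain
\[
J(t,0;u)\ge\mathbb E\int_t^T\Big\langle\Big(\Big[\tfrac{\lambda_G}{\|\Phi(T)^{-1}\|^2_\infty}+\int_r^T\tfrac{\lambda_Q(s)}{\|\Phi(s)^{-1}\|^2_\infty}\mathrm ds\Big]\tfrac{H(r)}{\|\Phi(r)\|^2_\infty}+R(r)\Big)u(r),u(r)\Big\rangle\mathrm dr .
\]
By \eqref{unifconvex_cond1} this is at least $\delta\,\mathbb E\int_t^T|u|^2\,\mathrm dr$, which is the claimed uniform convexity.

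The point needing the most care is that the norms $\|\cdot\|_\infty$ are essential suprema over $\omega$, so they are deterministic constants depending only on $s$ or $r$. These constants pull out of expectations freely. Also, $\lambda_G$ and $\lambda_Q(s)$ are deterministic, since they are essential infima. This is exactly why the condition is formulated with $\|\cdot\|_\infty$; otherwise the random factors would not commute with $\mathbb E$. Finiteness of these norms follows from the boundedness of $A$ through Gronwall's inequality. One also needs $\lambda_G,\lambda_Q\ge0$ so that the inequalities point the right way when the pointwise bounds are multiplied by them.
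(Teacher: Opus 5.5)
Your proposal is correct and follows essentially the same route as the paper. It uses variation of constants $X(s)=\Phi(s)\Xi(s)$ with $\Xi=\Phi^{-1}X$ a martingale, the It\^o isometry for $\mathbb E|\Xi(s)|^2$, the eigenvalue and $\|\cdot\|_\infty$ bounds on the terminal and running costs, and Fubini to combine these with the control cost under \eqref{unifconvex_cond1}. Your extra remarks are the points the paper uses without comment: there is no It\^o correction because $\Phi^{-1}$ has finite variation, and $\lambda_G$, $\lambda_Q(\cdot)$, $\|\Phi(\cdot)\|_\infty$, $\|\Phi(\cdot)^{-1}\|_\infty$ are deterministic.
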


\begin{proof}
  Fix $t\in[0,T)$ and $u\in\mathcal U[t,T]$, and let $X=X^{(u)}$ be the
  state with $X(t)=0$. Since \(A\) is bounded, \(\Phi(s)\) is
  invertible for all \(s\), and both \(\Phi\) and \(\Phi^{-1}\) are
  essentially bounded. By the variation of constants formula, we have 
  \(
    X(s)=\Phi(s)\Xi(s),
  \)
  where
  \[
    \Xi(s):=\int_t^s\Phi(r)^{-1}D(r)u(r)\mathrm dW(r)
    +\int_t^s\int_Z\Phi(r)^{-1}F(r,z)u(r)
    \widetilde N(\mathrm dr,\mathrm dz).
  \]
  By the It\^{o} isometry for Brownian and compensated Poisson integrals,
  \[
  \mathbb E[|\Xi(s)|^2]
  =
  \mathbb E\bigg[\int_t^s\Big(
  |\Phi(r)^{-1}D(r)u(r)|^2
  +\int_Z|\Phi(r)^{-1}F(r,z)u(r)|^2\lambda(\mathrm dz)
  \Big)\mathrm dr\bigg] .
  \]
  Hence
  \begin{equation}\label{terminal_H_est}
  \begin{aligned}
  \mathbb E[\langle GX(T),X(T)\rangle]
  \ge
  \frac{\lambda_G}{\|\Phi(T)^{-1}\|_\infty^2}\mathbb E[|\Xi(T)|^2]
  \ge
  \mathbb E\bigg[\int_t^T
  \frac{\lambda_G}{\|\Phi(T)^{-1}\|_\infty^2\|\Phi(r)\|_\infty^2}
  \langle H(r)u(r),u(r)\rangle\mathrm dr \bigg].
  \end{aligned}
  \end{equation}
  Similarly, using Fubini's theorem,
  \begin{equation}\label{running_H_est}
  \begin{aligned}
  \mathbb E\bigg[\int_t^T\langle Q(s)X(s),X(s)\rangle\mathrm ds\bigg] \ge
  \mathbb E\bigg[\int_t^T
  \bigg(
  \int_r^T\frac{\lambda_Q(s)}{\|\Phi(s)^{-1}\|_\infty^2}\mathrm ds
  \bigg)
  \frac{\langle H(r)u(r),u(r)\rangle}{\|\Phi(r)\|_\infty^2}
  \mathrm dr\bigg] .
  \end{aligned}
  \end{equation}
  Combining \eqref{terminal_H_est} and \eqref{running_H_est} with the
  control cost leads to
  \[
  \begin{aligned}
  J(t,0;u)
  &\ge
  \mathbb E\bigg[\int_t^T\bigg\langle\bigg[\bigg(
  \frac{\lambda_G}{\|\Phi(T)^{-1}\|_\infty^2}
  +\int_r^T\frac{\lambda_Q(s)}{\|\Phi(s)^{-1}\|_\infty^2}\mathrm ds
  \bigg)\frac{H(r)}{\|\Phi(r)\|_\infty^2}+R(r)\bigg]u(r),u(r)
  \bigg\rangle\mathrm dr  \bigg]
  \\
  &\ge
  \delta\,\mathbb E\bigg[\int_t^T|u(r)|^2\mathrm dr \bigg].
  \end{aligned}
  \]
  Thus $u\mapsto J(t,0;u)$ is uniformly convex.
\end{proof}

\begin{remark}
  Condition \eqref{unifconvex_cond1} allows $R$ to be negative definite provided the combination $D^{\top}D+\int_Z{F^{\top}F\lambda(\mathrm{d}z)}$ is sufficiently positive definite. Thus, neither \eqref{convex1} nor \eqref{convex2} need hold.
\end{remark}

\section{Illustrative Examples}\label{sec:example}

We present two examples demonstrating that the SRE-J can be solvable in the indefinite case.

\begin{exam}[Indefinite $G$ and negative $R$]\label{exam:illustrative2}
Let $n=2$, $m=1$, $T>0$, and $\lambda(Z)=1$.
Take
$A=B=C=E=S=Q=0$, $D=F=(1,0)^\top$, $G=\begin{pmatrix}5&0\\0&-1\end{pmatrix}$, and $R=-9.$
Then the state equation is
\[
\mathrm dX(s)=(1,0)^\top u(s)\,\mathrm dW(s)
+\int_Z (1,0)^\top u(s)\,\widetilde N(\mathrm ds,\mathrm dz),
\qquad X(t)=\xi\in\mathbb R^2,
\]
and the cost functional is
\(
J(t,\xi;u)
=\mathbb E\left[
5X_1(T)^2-X_2(T)^2
-9\int_t^T |u(s)|^2\,\mathrm ds
\right].
\)
Since $X_2(s)=\xi_2$ and $\lambda(Z)=1$, by the It\^o isometry,
\(
  \mathbb E|X_1(T)|^2
  =\mathbb E|\xi_1|^2
  +2\mathbb E\int_t^T|u(s)|^2\,\mathrm ds.
\)
Hence,
\[
  J(t,\xi;u)
  =5\mathbb E|\xi_1|^2-\mathbb E|\xi_2|^2
  +\mathbb E\bigg[\int_t^T|u(s)|^2\,\mathrm ds\bigg].
\]
In particular,
\(
  J(0,0;u)=\mathbb E\int_0^T|u(s)|^2\,\mathrm ds,
\)
so \ref{N_0uniformalconvex2-I} holds with $\varepsilon=1$.
For these coefficients, the SRE-J over $[0,T]$ reduces to
\[
\begin{cases}
\mathrm dP(s)=\widehat S(s)^\top\widehat R(s)^{-1}\widehat S(s)\,\mathrm ds
+\Lambda(s)\mathrm dW(s)
+\displaystyle\int_Z\zeta(s,z)\widetilde N(\mathrm ds,\mathrm dz),\\
P(T)=G,
\end{cases}
\]
where
\(
\widehat S(s)=D^\top\Lambda(s)+\int_ZF^\top\zeta(s,z)\lambda(\mathrm dz),
\)
and
\(
\widehat R(s)=R+D^\top P(s-)D
+\int_ZF^\top\big(P(s-)+\zeta(s,z)\big)F\,\lambda(\mathrm dz).
\)

It is straightforward to verify that $(P,\Lambda,\zeta)=(G,0,0)$
solves the SRE-J with $\widehat S=0$ and
\(
\widehat R
=1.
\)
% The unique open-loop optimal
% control is \(u^*(s)=0\). 
Thus the SRE-J is solvable although $G$ is
indefinite and $R<0$.
\end{exam}

% \[
% \eta=\mathbb E\eta+\int_0^T\mathbb E[D_t^W\eta\mid\mathcal F_{t-}]
% \,\mathrm dW(t)
% +\int_0^T\int_Z
% \mathbb E[D_{t,z}^N\eta\mid\mathcal F_{t-}]
% \,\widetilde N(\mathrm dt,\mathrm dz).
% \]
% Let $\mu(t)$ be a c\`adl\`ag modification of
% $\mathbb E[\eta\mid\mathcal F_t]$, and let
% $\lambda_W(t)$ and $\lambda_N(t,z)$ be predictable versions of
% $\mathbb E[D_t^W\eta\mid\mathcal F_{t-}]$ and
% $\mathbb E[D_{t,z}^N\eta\mid\mathcal F_{t-}]$, respectively. Then

\begin{exam}[Random SRE-J solution with a nonzero jump component]\label{exam:malliavin}
Let $n=2$, $m=1$, $T>0$, and $\lambda(Z)=1$. 
Let $\eta\in L_{\mathcal F_T}^{\infty}(\Omega;\mathbb R)$ be Malliavin
differentiable with respect to both $W$ and $\widetilde N$. 
By the Clark--Ocone formula on the Wiener--Poisson space
(see, e.g., Di Nunno, {\O}ksendal, and
Proske~\cite[Theorem~12.20]{di2009malliavin}),
the bounded martingale $\mu(t):=\mathbb E[\eta\,|\,\mathcal F_t]$ admits the
representation
\begin{equation}\label{mu_malliavin_example}
\mu(t)=\mathbb E\eta+\int_0^t\lambda_W(s)\,\mathrm dW(s)
+\int_0^t\int_Z\lambda_N(s,z)\,\widetilde N(\mathrm ds,\mathrm dz),
\qquad t\in[0,T],
\end{equation}
where $\lambda_W$ and $\lambda_N$ are predictable versions of
$\mathbb E[D_t\eta\,|\,\mathcal F_t]$ and
$\mathbb E[D_{t,z}\eta\,|\,\mathcal F_{t-}]$, respectively.

Consider the SLQ problem where the coefficients of the state equation are
given by
\[
A(s)=\begin{pmatrix}0&1\\0&0\end{pmatrix},\quad
B(s)=0,\quad C(s)=0,\quad E(s,z)=0,\quad
D(s)=F(s,z)=\binom{1}{2},
\]
and the weighting matrices in the cost functional are given by $S(s)=0$ and
\[
G=\begin{pmatrix}-(1+T^2)&T\\T&1+T^2\end{pmatrix}
+\eta\begin{pmatrix}4&-2\\-2&1\end{pmatrix},
\quad
Q(s)=\begin{pmatrix}2s&s^2\\s^2&-4s\end{pmatrix}
+4\mu(s-)\begin{pmatrix}0&-1\\-1&1\end{pmatrix},
\quad
R(s)=-(1+s^2).
\]
For this SLQ problem, the associated SRE-J over $[0,T]$ is
\begin{equation}\label{SREJ_example_malliavin}
\begin{cases}
\mathrm dP
=-\big(A^\top P_-+P_-A+Q-\widehat S^\top\widehat R^{-1}\widehat S\big)
\,\mathrm ds
+\Lambda\,\mathrm dW+\displaystyle\int_Z\zeta\widetilde N(\mathrm ds,\mathrm dz),\\
P(T)=G,
\end{cases}
\end{equation}
where
\(
\widehat S(s)=D^\top\Lambda(s)+\int_ZF^\top\zeta(s,z)\lambda(\mathrm dz),
\)
and
\(
\widehat R(s)=R(s)+D^\top P(s-)D
+\int_ZF^\top\big(P(s-)+\zeta(s,z)\big)F\,\lambda(\mathrm dz).
\)
It is straightforward to verify that the adapted solution
$(P,\Lambda,\zeta)$ of \eqref{SREJ_example_malliavin} is given by
\begin{equation}\label{sol_malliavin_example}
P(s)=\begin{pmatrix}-(1+s^2)&s\\s&1+s^2\end{pmatrix}
+\mu(s)\begin{pmatrix}4&-2\\-2&1\end{pmatrix},
\quad
\Lambda(s)=\lambda_W(s)\begin{pmatrix}4&-2\\-2&1\end{pmatrix},
\quad
\zeta(s,z)=\lambda_N(s,z)\begin{pmatrix}4&-2\\-2&1\end{pmatrix}.
\end{equation}

From \eqref{sol_malliavin_example}, we observe that $P$ need not be positive
definite, and the jump component $\zeta$ may be nonzero. Indeed, take
\[
\eta=\frac1{16}\sin(W(T)^2)+\frac18\mathbf 1_{\{\mathsf{N}_T\ge1\}},
\qquad
\mathsf{N}_T:=N((0,T]\times Z).
\]
Then $\|\eta\|_\infty\le 3/16<1/4$, and hence
\(
P_{11}(s)=4\mu(s)-1-s^2<0,
\)
so $P(s)$ is not positive definite.  Moreover,
\(
D_{t,z}\eta=\frac18\mathbf 1_{\{\mathsf N_T=0\}}.
\)
By the independent increments of the Poisson random measure and
$\lambda(Z)=1$,
\[
\lambda_N(t,z)
=\mathbb E[D_{t,z}\eta\mid\mathcal F_{t-}]
=\frac18e^{-(T-t)}\mathbf 1_{\{N((0,t)\times Z)=0\}}.
\]
Thus $\zeta$ is nonzero before the first jump. 

The cost functional of this SLQ problem is uniformly convex. To see this, applying It\^o's formula to $s\mapsto \langle P(s)X(s),X(s)\rangle$ and noting 
\(
\widehat R(s)
=5(1+s^2)+8s
\), we obtain 
\[
J(t,0;u)
=\mathbb E\bigg[\int_t^T\widehat R(s)|u(s)|^2\,\mathrm ds\bigg]
\ge
5\,\mathbb E\bigg[\int_t^T|u(s)|^2\,\mathrm ds\bigg] ,
\]
so the uniform convexity condition \ref{N_0uniformalconvex2-I} holds with $\varepsilon=5$.
\end{exam}

\section{Conclusion}\label{sec:conclusion}

In this paper, we have studied an indefinite stochastic linear-quadratic optimal control problem with random coefficients and Poisson jumps.
The weighting matrices in the cost functional are allowed to be indefinite, and the uniform convexity condition is imposed instead of the standard condition.
We prove that the associated SRE-J admits a maximal strongly regular solution and derive the closed-loop representation of the open-loop optimal control.
A key point is that the Riccati process is constructed from the stochastic value flow, since the global representation used in the diffusion case may fail in the presence of jumps.
Some sufficient conditions for uniform convexity are given, and examples are presented to illustrate the indefinite case.
These results extend the closed-loop representation of open-loop optimal controls
from indefinite SLQ problems driven only by Brownian motion to those with Poisson jumps.
% It remains open whether the SRE-J admits a unique strongly regular solution
% for indefinite SLQ problems.

\bibliographystyle{plain}

\bibliography{jump-2}

\end{document}